\newif\ifsiamart
  \newcommand{\qedhere}{\hfill \square}
  \newcommand{\email}[1]{\href{mailto:#1}{#1}}
  \newenvironment{keywords}{\small \noindent\begin{quote}\textbf{Keywords.}}{\end{quote}}
  \newenvironment{MSCcodes}{\vspace{.2cm}\small \noindent \begin{quote}\textbf{MSC codes.}}{\end{quote}}
  \theoremstyle{plain}
  \newtheorem{theorem}{Theorem}
  \newtheorem{lemma}[theorem]{Lemma}
  \newtheorem{remark}[theorem]{Remark}
  \newtheorem{assumption}{Assumption}
  \numberwithin{equation}{section}
  \numberwithin{theorem}{section}
  \crefname{lemma}{Lemma}{Lemmas}
  \crefname{remark}{Remark}{Remarks}
  \crefname{proposition}{Proposition}{Propositions}
  \crefname{section}{Section}{Sections}
  \crefname{subsection}{Subsection}{Subsections}
  \crefname{equation}{}{}
  \Crefname{equation}{Equation}{Equations}
  \Crefname{figure}{Figure}{Figures}
\crefname{remark}{Remark}{Remarks}
\crefname{assumption}{Assumption}{Assumptions}
\crefname{enumi}{}{}
\newcommand{\equalcontrib}{\textsuperscript{\textdagger}}
\DeclarePairedDelimiter{\abs}{\lvert}{\rvert}
\DeclarePairedDelimiter{\norm}{\lVert}{\rVert}
\DeclarePairedDelimiter{\bra}{(}{)}
\DeclarePairedDelimiter{\pra}{[}{]}
\providecommand\st{}
\DeclarePairedDelimiterXPP\set[1]{}\{\}{}{
\renewcommand\st{\nonscript\:\delimsize\vert\nonscript\:\mathopen{}}
\DeclarePairedDelimiter{\scp}{\langle}{\rangle}
\DeclarePairedDelimiterXPP\condexpect[1]{\mathbb{E}}[]{}{
  
  #1}
\DeclarePairedDelimiterXPP\condexpectwithinitial[2]{\mathbb{E}_{#1}}[]{}{
  
  #2}
\DeclareMathOperator{\diag}{diag}
\DeclareMathOperator{\e}{e}
\DeclareMathOperator*{\argmin}{arg\,min}
\DeclareMathOperator{\supp}{supp}
\newcommand{\placeholder}{\mathord{\color{black!33}\bullet}}%
\newcommand{\range}[2]{\llbracket #1, #2 \rrbracket}
\renewcommand{\d}{\mathrm d}
\newcommand{\nm}{\mathcal M}
\newcommand{\wm}{\mathcal M_{\alpha}}
\newcommand{\C}{\mathbf{C}}
\newcommand{\N}{\mathbf{N}}
\newcommand{\R}{\mathbf R}
\newcommand{\real}{\R}
\newcommand{\nat}{\N}
\renewcommand{\t}{\mathsf T}
\renewcommand{\leq}{\leqslant}
\renewcommand{\geq}{\geqslant}
\renewcommand{\le}{\leqslant}
\renewcommand{\ge}{\geqslant}
\newcommand{\wasserstein}{\mathcal W}
\newcommand{\allx}[1]{\mathcal{X}^{#1}}
\newcommand{\allxl}[1]{\widebar {\mathcal{X}}^{#1}}
\newcommand{\allxt}[1]{\widetilde {\mathcal{X}}^{#1}}
\newcommand{\xnt}[1]{\widetilde X^{#1}}
\renewcommand{\d}{\mathrm d}
\newcommand{\wmx}[1]{\wm\bigl(\mu_{\mathcal X^J_{#1}}\bigr)}
\newcommand{\wmxl}[1]{\wm\bigl(\mu_{\widebar{\mathcal X}^J_{#1}}\bigr)}
\newcommand{\wmmu}{\wm\bigl(\mu\bigr)}
\newcommand{\wmnu}{\wm\bigl(\nu\bigr)}
\newcommand{\wmt}[1]{\wm\bigl(\mu_{\widetilde {\mathcal X}^J_{#1}}\bigr)}
\newcommand{\nmx}[1]{\mathcal M\bigl(\mu_{\mathcal X^J_{#1}}\bigr)}
\newcommand{\nmxl}[1]{\mathcal M\bigl(\mu_{\widebar{\mathcal X}^J_{#1}}\bigr)}
\newcommand{\nmt}[1]{\mathcal M\bigl(\mu_{\widetilde {\mathcal X}^J_{#1}}\bigr)}
\newcommand{\nml}[1]{\mathcal M\bigl(\mu_{\widebar{\mathcal X}^J_{#1}}\bigr)}
\newcommand{\emp}[1]{\mu_{\mathcal X^J_{#1}}}
\newcommand{\empl}[1]{\mu_{\widebar{\mathcal X}^J_{#1}}}
\newcommand{\empt}[1]{\mu_{\widetilde{\mathcal X}^J_{#1}}}
\newcommand{\proba}{\mathbf P}
\newcommand{\expect}{\mathbf{E}}
\newcommand{\E}{\mathbf{E}}
\newcommand{\var}{\mathbb V}
\newcommand{\tr}{{\rm trace}\,}
\newcommand{\meanx}[1]{\mathcal M\bigl(\mu_{\allx{J}_{#1}}\bigr)}
\newcommand{\wmeanx}[1]{\wm\bigl(\mu_{\allx{J}_{#1}}\bigr)}
\newcommand{\calE}{\mathcal E}
\newcommand{\wmeanmfl}[1]{\wm\bigl(\mfldis_{#1}\bigr)}
\newcommand{\meanmfl}[1]{\mathcal M\bigl(\mfldis_{#1}\bigr)}
\newcommand{\momentp}[2]{\mathfrak M_{#1}(#2)}
\newcommand{\empmomentp}[2]{\mathfrak M_{#1}\bigl(\emp{#2} \bigr)}
\newcommand{\emplmomentp}[2]{\mathfrak M_{#1}\bigl(\empl{#2}\bigr)}
\newcommand{\emptmomentp}[2]{\mathfrak M_{#1}\bigl(\empt{#2} \bigr)}
\newcommand{\rawmomentp}[2]{\mathfrak M^\circ_{#1}\bigl({#2} \bigr)}
\newcommand{\rawempmomentp}[2]{\mathfrak M^\circ_{#1}\bigl(\emp{#2} \bigr)}
\newcommand{\rawemplmomentp}[2]{\mathfrak M^\circ_{#1}\bigl(\empl{#2}\bigr)}
\newcommand{\rawemptmomentp}[2]{\mathfrak M^\circ_{#1}\bigl(\empt{#2}\bigr)}
\newcommand{\edecay}[1]{\lambda_{#1}}
\newcommand{\particleNumber}[1]{#1}
\newcommand{\xn}[1]{X^{\particleNumber{#1}}}
\newcommand{\zn}[1]{Z^{\particleNumber{#1}}}
\newcommand{\xnn}[2]{X^{\particleNumber{#1,#2}}}
\newcommand{\wn}[1]{W^{\particleNumber{#1}}}
\newcommand{\xl}{\widebar{X}}
\newcommand{\xnl}[1]{\widebar{X}^{\particleNumber{#1}}}
\newcommand{\mfldis}{\widebar{\rho}}
  \let\oldparagraph=\paragraph
  \renewcommand\paragraph[1]{\oldparagraph{#1.}}
\newcommand{\eps}{\varepsilon}
\newcommand{\cptmfl}{x^\alpha_*}
\newcommand{\cpt}{\mathcal M^{\alpha,X_*}}
\definecolor{darkred}{rgb}{.7,0,0}
\definecolor{darkgreen}{rgb}{.1,.7,0}
\definecolor{SkyBlue}{HTML}{46C5DD}
\newcommand{\icbad}[2]{\widetilde C_{{\rm Bad},#1,#2}}
\newcommand{\cbad}[2]{C_{{\rm Bad},#1,#2}}
\newcommand{\craw}[1]{C_{{\rm Raw},#1}}
\newcommand{\cmz}[1]{C_{{\rm MZ},#1}}
\newcommand{\cwm}[1]{C_{{\rm WM},#1}}
\newcommand{\cbdg}[1]{C_{{\rm BDG},#1}}
\newcommand{\csmallbdg}[1]{c_{{\rm BDG},#1}}
\newcommand{\cone}{ \kappa^{-1} \bra*{2C_{\mathcal M}^2 C_{\rm Q} \Bigl(1 + 2 \tau(S) \sigma^2\Bigr)  + 2}}
\newcommand{\ctwo}{ \kappa^{-1}\Bigl( 2C_{\mathcal M}^2  C_{\rm Q} + \cwm{2} \momentp{2}{\mfldis_0}\Bigr) \Bigl(1 + 2 \tau(S) \sigma^2\Bigr)}
\newcommand{\ctildeone}{1 + 2 C_{\mathcal M}^2 \tilde{C}_{\rm Q} \left( 1 + \tau(S) \sigma^2 \right)}
\newcommand{\ctildetwo}{ 2 C_{\mathcal M}^2 \widetilde{C}_{\rm Q} \left( 1 + \tau(S)\sigma^2
  \right)}
\renewcommand{\var}{\operatorname{Var}}
\title{Uniform-in-time propagation of chaos for Consensus-Based Optimization}
  \author{%
    Nicolai Gerber\equalcontrib\thanks{%
      Institute of Applied Analysis, Ulm University, Germany
      (\email{nicolai.gerber@uni-bonn.de}).
    } \and
    Franca Hoffmann\equalcontrib\thanks{%
      Department of Computing and Mathematical Sciences, Caltech, USA (\email{franca.hoffmann@caltech.edu}).
    } \and
    Dohyeon Kim\equalcontrib\thanks{%
      Department of Computing and Mathematical Sciences, Caltech, USA (\email{dohyeon@caltech.edu}).
    } \and
    Urbain Vaes\equalcontrib\thanks{%
      MATHERIALS, Inria Paris \& CERMICS, \'Ecole des Ponts, France (\email{urbain.vaes@inria.fr})
    }
  }
  \author[1]{Nicolai Gerber$^{a,}$\equalcontrib}
  \author[2]{Franca Hoffmann$^{b,}$\equalcontrib}
  \author[2]{Dohyeon Kim$^{c,}$\equalcontrib}
  \author[3,4]{Urbain Vaes$^{d,}$\equalcontrib}
  \affil[ ]{\footnotesize%
    $^a$\email{nicolai.gerber@uni-ulm.de},
    $^b$\email{franca.hoffmann@caltech.edu},
    $^c$\email{dohyeon@caltech.edu},
    $^d$\email{urbain.vaes@inria.fr}.
  }
  \affil[1]{\footnotesize Institute of Applied Analysis, Ulm University, Germany}
  \affil[2]{\footnotesize Department of Computing and Mathematical Sciences, Caltech, USA}
  \affil[3]{\footnotesize MATHERIALS team, Inria Paris, France}
  \affil[4]{\footnotesize CERMICS, \'Ecole des Ponts, France}
  \date{}
\begin{document}

\maketitle
\begingroup
\renewcommand\thefootnote{\textdagger}
\footnotetext{All authors contributed equally.}
\endgroup

\begin{abstract}
    We study the derivative-free global optimization algorithm Consensus-Based Optimization (CBO),
    establishing uniform-in-time propagation of chaos as well as an almost uniform-in-time stability result for the microscopic particle system. Moreover, we prove almost sure exponential convergence of the microscopic CBO system around a point close to the global minimizer.
    The proof of these results is based on a novel stability estimate for the weighted mean and on
    a quantitative concentration inequality for the microscopic particle system around the empirical mean.
    Our propagation of chaos result recovers the classical Monte Carlo rate,
    with a prefactor that depends explicitly on the parameters of the problem.
    Notably, in the case of CBO with anisotropic noise,
    this prefactor is independent of the problem dimension.
\end{abstract}

\begin{keywords}
    Uniform-in-time propagation of chaos, Mean-field limits, Interacting particle systems, Consensus-Based Optimization, Synchronous coupling
\end{keywords}

\begin{MSCcodes}
    35Q93, 65C35, 90C26, 90C56
\end{MSCcodes}


\section{Introduction}\label{sec:introduction}

\subsection{Overview}
\label{sub:overview}

As a powerful alternative to gradient-based optimization algorithms,
a number of metaheuristics have been developed to solve highly challenging global optimization problems.
Many algorithms in this family evolve a set of particles that interact and are driven by two forces:
One is a deterministic drift, and the other is a stochastic noise that allows particles to explore the landscape of the objective function and to escape local minima.
Under the umbrella of metaheuristics lie well-known methods such as Simulated Annealing (SA) \cite{MR702485}, Particle Swarm Optimization (PSO) \cite{488968}, or Ant Colony Optimization \cite{585892}.
In this article, we focus on another algorithm known as Consensus-Based Optimization (CBO),
which was proposed relatively recently in~\cite{CBO}.
Like many other metaheuristics,
this method is a gradient-free algorithm which drives particles through a combination of a drift force that exploits available information on the objective function,
and a multiplicative random noise that promotes exploration.
Since CBO does not require gradient evaluations of the objective function,
the method is particularly convenient when the function to minimize is only available as a black box oracle,
or when it is difficult to calculate derivatives of this function efficiently or accurately.

Although global optimization metaheuristics are challenging to analyze rigorously in general,
some pioneering works have been accomplished for simulated annealing \cite{MR942621, MR933455, MR995752, }
and for CBO \cite{carrillo2018analytical,fornasier2024consensus, bonandin2025strongglobalconvergenceconsensusbasedoptimization}.
Unlike PSO, which is more widely used than CBO in the optimization community given it was already introduced 30 years ago,
CBO is an interacting particle system for which mean-field descriptions are accessible.
In the limit where the number of particles tends to infinity,
the spatial configuration of the particles may be described by a probability density evolving according to a deterministic but nonlocal Fokker--Planck equation.
Through such a mean-field approximation,
the works~\cite{carrillo2018analytical,fornasier2024consensus} were the first to examine the convergence properties of CBO in a mathematically rigorous way.
The authors were able to prove that,
under appropriate assumptions including uniqueness of the global minimizer,
the mean-field system
eventually converges to a Dirac distribution at a consensus point close to the global minimizer.
Furthermore, the distance between the consensus point and the minimizer can be controlled in terms of an inverse temperature parameter,
denoted by~$\alpha$ in the following.

Nonetheless, since actual implementations use a finite number of particles and a discrete-time evolution,
the convergence guarantees at the level of the  continuous-time, mean-field equation are not sufficient to ensure the convergence of the algorithm in practice.
Indeed, the mean-field limit of CBO provides us with an averaged, collective description of the system,
which constitutes a faithful description of the particle system only when the number of agents is very large.
The existence of a limiting macroscopic equation as the number of particles increases to infinity is closely related to
the phenomenon whereby any finite group of particles become asymptotically independent in the same limit
-- a property called \emph{propagation of chaos} in the literature.
The recent works~\cite{fornasier2024consensus,gerber2023meanfield,bonandin2025strongglobalconvergenceconsensusbasedoptimization} aim to bridge the gap between the microscopic system (particle regime) and the macroscopic equation (mean-field regime) for the~CBO algorithm. While these works represent a significant step forward in the analysis of the~CBO algorithm, the provided error estimates exhibit an exponential dependence on time. In order to leverage the asymptotic analysis available at the PDE level for understanding the behavior of the CBO algorithm as time goes to infinity, quantitative uniform-in-time error estimates for the mean-field limit are required.
In this paper,
we focus on proving a uniform-in-time (UiT) result for the convergence of continuous-time~CBO to the corresponding mean-field limit.

Two related works were very recently completed in this direction,
see \cite{huang2024uniformintimemeanfieldlimitestimate} and \cite{bayraktar2025uniformintimeweakpropagationchaos}.
Both were able to show the uniform-in-time convergence of the particle system to the mean-field equation.
However, the former work uses a modified version of the CBO algorithm,
coined \emph{rescaled CBO}, comprising an additional convex interaction,
whereas the latter only proves weak propagation of chaos for another modification of the CBO algorithm,
which includes a cut-off to confine the particle system in a bounded domain.
Hence, neither works provides a convergence estimate that holds uniformly in time for the original CBO method.
The main challenge of completing this estimate is in managing the lack of uniform convexity.
To simplify this challenge, these previous two works both modified the algorithm,
whereas we analyze the original method using an approach motivated at the end of~\cref{sec:propagation_chaos}.

In order to obtain convergence guarantees for the discrete-time, finite-ensemble CBO method used in practice,
one should also analyze the error introduced by time discretization of the continuous dynamics.
We do not conduct such an investigation in this paper,
but note that the time discretization error can be controlled in a finite time interval via classical results from numerical analysis~\cite{MR1214374,MR3097957},
as done in~\cite{fornasier2024consensus, bonandin2025strongglobalconvergenceconsensusbasedoptimization}. 
Whether the time discretization error can be bounded uniformly in time is a topic we leave for future work.

An alternative approach to analyzing the convergence of discrete-time CBO by means of a triangle inequality
with the continuous time dynamics as a pivot,
is to study instead the performance of the discrete-time dynamics for optimization tasks directly,
without reference to the continuous-time formulation.
Such an approach is undertaken in~\cite{MR4179193, MR4215338, MR4456850,byeon2024discreteconsensusbasedoptimization}.

In the rest of this introduction,
we provide the basic setting of the CBO algorithm in~\cref{sub:setup},
and provide a brief literature review on propagation of chaos in~\cref{sec:propagation_chaos}.
We then summarize our contributions and present a plan of the paper in~\cref{sub:our_contributions},
and finally introduce key notation for the rest of the paper in~\cref{sec:notations}.

\subsection{Mathematical setting}
\label{sub:setup}

For a given objective function $f\colon \R^d \rightarrow \R$ and  number of particles $J\in\nat_{>0}$,
we consider the following interacting particle system, known as Consensus-Based Optimization (CBO):
\begin{align}
    \label{eq:cbo}
    \d \xn{j}_t = - \Bigl(\xn{j}_t - \wmx{t}\Bigr) \, \d t + \sigma S\bra*{\xn{j}_t - \wmx{t}} \, \d \wn{j}_t\,,
    \qquad j = 1, \dots, J\,,
\end{align}
where $(W_t^j)_{j = 1}^J$ are independent $\R^d$-valued Brownian motions and noise strength $\sigma>0$.

Here, we write $\emp{t} := \frac{1}{J}\sum_{j = 1}^J \delta_{\xn{j}_t}$ to denote the empirical measure of the $J$-particle system at time~$t$,
and use the weighted average operator $\mathcal M_{\alpha} \colon \mathcal P_1(\real^d) \to \real^d$ defined by
\begin{align}
    \wm(\mu) := \frac{\int_{\R^d} x \e^{-\alpha f(x)} \, \mu(\d x)}{\int_{\R^d} \e^{-\alpha f(x)} \, \mu(\d x)} \, ,
\end{align}
for a probability measure $\mu \in \mathcal{P}(\R^d)$.
The noise operator $S\colon \R^d \rightarrow \R^{d \times d}$ is either $S=S^{(i)}$ or $S=S^{(a)}$.
The isotropic noise operator~$S^{(i)}$ is given by $S^{(i)}(x) := \abs{x} I_d$,
and the anisotropic noise operator $S^{(a)}$
given by $S^{(a)}(x) := \diag(x_1, \dots, x_d)$,
i.e., the diagonal matrix with the components of $x$ on the diagonal.
The isotropic noise operator was the first to be studied in the literature~\cite{CBO,carrillo2018analytical},
while anisotropic noise was introduced later to improve the performance of the method for high-dimensional problems~\cite{CBO-high-dim}.
To state our results in a compact manner, it will be useful to define
\begin{align}
    \label{eq:noise-prefactor}
    \tau(S):= \begin{cases}
        d & \text{if } S = S^{(i)} \qquad\text{(isotropic noise)}\,,\\
        1 & \text{if } S = S^{(a)} \qquad\text{(anisotropic noise)}\,.
    \end{cases}
\end{align}

Throughout this paper, we will require the following assumptions on the objective function $f$.
\begin{assumption}
    \label{assumption:bounded}
    The function $f\colon \R^d \to \R$ is bounded from below and above: $\underline f \leq f(x) \leq \overline f$.
\end{assumption}

\begin{assumption}
    \label{assumption:lip}
    The function $f\colon \R^d \to \R$  is globally Lipschitz with constant $L_f$.
\end{assumption}

From~\cite{gerber2023meanfield},
it is known that under these assumptions and on a finite time interval,
the flow of empirical measures $t\mapsto\emp{t}=\frac{1}{J}\sum_{j=1}^J \delta_{\xn{j}_t}$ of the particles given by~\eqref{eq:cbo} converges in an appropriate sense,
in the limit~$J \to \infty$ of infinitely many particles,
to the McKean--Vlasov process governed by the following equation:
\begin{equation}\label{eq:mfl_sde_first}
    \left\{
        \begin{aligned}
            \d\widebar{X}_t &= - \bigl(\xl_t - \wm(\mfldis)\bigr) \, \d t + \sigma S\bra*{\widebar{X}_t -\wm(\mfldis)} \, \d W_t \\
            \mfldis_t &= {\rm Law}(\xl_t).
        \end{aligned}
    \right.
\end{equation}
Furthermore, the law $(\mfldis_t)_{t \geq 0}$ is a solution to the following nonlinear, nonlocal Fokker--Planck equation:
\begin{align}
    \label{eq:mfl_pde_first}
    \partial_t\mfldis_t = \nabla \cdot \bra[\Big]{ \bigl(x - \wm(\mfldis_t) \bigr) \mfldis_t } + \frac{\sigma^2}{2} \nabla \cdot \nabla \cdot \bra[\Big]{
    D(\mfldis_t, x) \mfldis_t }\,,
\end{align}
where $D(\mfldis, x) := S\bra[\big]{x - \wm (\mfldis)} S\bra[\big]{x - \wm (\mfldis)}^\t$.

\subsection{Propagation of chaos}\label{sec:propagation_chaos}

Propagation of chaos refers to the property of some interacting particle systems whereby the particles decouple asymptotically as the number of agents tends to infinity~\cite{kac1956foundations, ReviewChaintronI,ReviewChaintronII}.
In order to prove propagation of chaos, one usually assumes that particles are initially independent and then shows that,
in the large particle limit, they are asymptotically independent also for later times.
Thus, the initial chaos is propagated forward in time.
The main focus of this work is on proving that the mean-field limit for CBO holds uniformly in time.
In this section, we first briefly review a few of the milestones in the vast literature on mean-field limits, then highlight recent works in this area that are specifically concerned with CBO, and finally give a brief description of the approach we follow in this manuscript.

A classical approach to prove quantitative mean-field limits is by synchronous coupling as proposed by McKean~\cite[Theorem 3.1]{ReviewChaintronII} and Sznitman~\cite{MR1108185}. Under appropriate convexity assumptions,
Sznitman's method can be extended to prove uniform-in-time estimates using ideas due to Malrieu and collaborators~\cite{MR1847094,MR2731396}.
Using reflection or sticky couplings, uniform-in-time mean-field limits can be shown for certain non-convex confinement and interaction potentials~\cite{Eberle_2015, MR4163850, MR4489825, sticky_coupling}.
Another active line of work, pioneered by D.\ Lacker and L.\ Le Flem \cite{MR4595391, MR4634344},
is based on an appropriate form of the BBGKY hierarchy and was recently extended to dynamics with non-constant diffusion coefficients~\cite{grass2024sharppropagationchaosmckeanvlasov}.

There is also a large body of works on mean-field limits (mostly non-uniform in time) in the presence of irregular or even singular interactions.
Some of these works extend the classical synchronous coupling approach to more singular interactions \cite{MR2860672}, see also \cite[Section 3.1.2]{ReviewChaintronII} for other works.
Let us also mention the modulated energy approach~\cite{MR4158670, MR4462479, MR4564418},
as well as results proved through an entropy-based approach by D. Lacker, P. E. Jabin, Z. Wang and others~\cite{10.1214/18-ECP150,jabir2019rate,MR3858403,MR4632269,MR4658923},
which were extended to the uniform-in-time setting in ~\cite{guillin2024uniform}.
For a thorough review of methods and applications of propagation of chaos, we refer the reader to the review papers~\cite{ReviewChaintronI,ReviewChaintronII}.

Given a finite-time propagation of chaos estimate,
a general approach to proving uniform-in-time propagation of chaos consists of combining the finite-time estimate
with an exponential contractivity estimate for the mean-field system.
A recent work~\cite{schuh2024conditionsuniformtimeconvergence} provides a unifying framework for this strategy,
with applications not only to propagation of chaos,
but also to averaging of fast/slow multiscale systems and time discretization of SDEs through numerical approximation.
We also refer to \cite{gerber2025uniformintimepropagationchaoscuckersmale},
where a similar strategy is deployed to prove uniform-in-time propagation of chaos for the Cucker--Smale model.

The framework provided in~\cite{schuh2024conditionsuniformtimeconvergence} could potentially be used in the context of CBO,
and may also prove useful to obtain uniform-in-time bounds on the discretization error for CBO in future work.
However, as the present manuscript demonstrates,
proving a uniform-in-time stability estimate for the CBO interacting particle system presents a level of difficulty similar to proving uniform-in-time propagation of chaos directly.
Therefore, we shall take a more direct and self-contained approach,
which is based on the classical synchronous coupling method from Sznitman~\cite{MR1108185}
and ideas from the work of Malrieu~\cite{MR1847094,MR2731396}. The rates in Malrieu~\cite{MR1847094} have been more recently improved by D. Lacker's group \cite{MR4634344} using the BBGKY hierarchy to obtain sharp uniform in time propagation of chaos results. This approach cannot directly be applied to the CBO algorithm and in particular, requires the noise coefficient to be non-vanishing, whereas our analysis requires the noise coefficient to be small enough.
This approach cannot directly be applied to the CBO algorithm since it requires the diffusion coefficient to be constant as in~\cite{MR4634344} or to be uniformly elliptic as in the finite-in-time result~\cite{grass2024sharppropagationchaosmckeanvlasov} for non-constant diffusion coefficients. In contract, the CBO algorithm has a non-constant vanishing diffusion coefficient, and for our analysis, we assume the multiplicative constant in front of that coefficient to be sufficiently small.

In its basic form, Sznitman's approach is applicable to an SDE with drift and diffusion coefficients that are globally Lipschitz continuous.
However, the drift and diffusion coefficients of the CBO dynamics are in general merely locally, not globally Lipschitz continuous,
which precludes the direct application of the classical synchronous coupling argument by Sznitman
to prove local-in-time propagation of chaos estimates.
This issue is circumvented in~\cite{gerber2023meanfield} by discarding an event of small probability in the main part of the analysis,
and appropriately controlling the probability of this event using an elementary concentration inequality.
A similar approach has been used previously in~\cite{Kalise_2023} for a variant of CBO based on jump processes,
but with a suboptimal rate of $\ln\bigl(\ln(J)\bigr)^{-1}$. 
There are several other works that investigate the mean-field limit for the original CBO dynamics,
but none of them proves quantitative, uniform-in-time propagation of chaos.
For instance, a non-quantitative mean-field result was shown in~\cite{CBO-mfl-Huang2021} using a compactness argument,
then adapted in~\cite{koß2024meanfieldlimitconsensus} to cover a more general class of~SDEs including consensus-based sampling~\cite{CBS-Carrillo2021}.
We also mention a partial finite-time propagation of chaos estimate from \cite{fornasier2024consensus},
where a quantitative result with the optimal Monte Carlo rate is obtained, but only if an event of small probability is discarded from the expectations. Finally, in the works of \cite{MR4468621, MR4204858, MR4329816}, particles are constrained to a compact manifold, over which the authors apply the usual synchronous coupling method and obtain a quantitative estimate with optimal rates.
A summary of finite-time mean-field results for CBO is presented in \cref{table:comparison_mfl_CBO}.

\renewcommand{\arraystretch}{1.2}
\begin{table}[h!]
\begin{center}
\begin{tabular}{c|c|c|c}
 & Result
 & Rate
 & Approach
\\
\hline
\cite{CBO-mfl-Huang2021, koß2024meanfieldlimitconsensus} &
Non-quantitative, finite-time & N/A & Compactness argument
\\
\hline
\cite{fornasier2024consensus}
& Semi-quantitative, finite-time
& Optimal, $J^{-\frac{1}{2}}$
& Synchronous coupling
\\
\hline
\cite{gerber2023meanfield}
& Quantitative, finite-time
& Optimal, $J^{-\frac{1}{2}}$
& Synchronous coupling
\\
\hline
\cite{Kalise_2023}
& Quantitative, finite-time
& Sub-optimal, $\ln(\ln(J))^{-\frac{1}{2}}$
& Synchronous coupling
\\
\hline
\cite{MR4468621, MR4204858, MR4329816}
& Quantitative, finite-time
& \makecell{Optimal, $J^{- \frac{1}{2}}$}
& \makecell{Synchronous coupling on the sphere}
\\
\end{tabular}
\end{center}
\caption{
    Comparison of finite-time mean-field limit results for CBO.
    The rates given refer to the rates of convergence as the number~$J$ of particles tends to infinity,
    of the Euclidean Wasserstein distance between the law of the~$J$-particle system and the $J$-times tensorized mean-field law,
    in presence of the normalization as in~\cite[Definition 3.5]{ReviewChaintronI} in the definition of the Wasserstein distance.
}
\label{table:comparison_mfl_CBO}
\end{table}

Extending local-in-time propagation of chaos estimates for CBO to the uniform-in-time setting is not straightforward,
because, at first sight, the CBO dynamics does not appear to exhibit sufficient convexity to deploy the approach of Malrieu.
In the recent work~\cite{huang2024uniformintimemeanfieldlimitestimate} mentioned in~\cref{sub:overview},
this issue is circumvented by modifying the CBO algorithm through a rescaling which,
if one looks at the drift, amounts to adding a convex confinement potential;
more precisely, the drift term $- \bigl(\xn{i} - \wm(\emp{t}) \bigr)$ of the original method is replaced by
$- \kappa \bigl(\xn{i} - \wm\bigl(\emp{t}\bigr) \bigr) - (1 - \kappa) \xn{i}$ for some $\kappa \in (0, 1)$.
The work~\cite{bayraktar2025uniformintimeweakpropagationchaos} takes a different approach,
by confining the particles to a manifold through truncation and showing only a weak type of propagation of chaos.
This is similar in spirit to the finite-time results from \cite{MR4468621, MR4204858, MR4329816},
but now achieving uniform-in-time estimates. 
A summary of uniform-in-time (UiT) mean-field results for CBO is presented in \cref{table:comparison_mfl_uit_CBO}.

\renewcommand{\arraystretch}{2.0}
\begin{table}[h!]
    \begin{center}
    \begin{tabular}{c|c|c|c}
     & Result
     & Rate
     & Approach
    \\
    \hline
    \cite{huang2024uniformintimemeanfieldlimitestimate}
    & Quantitative, UiT
    & \makecell{Optimal, $J^{- \frac{1}{2}}$ \\ in Wasserstein-2 metric}
    & \makecell{Synchronous coupling for modified CBO algorithm}
    \\
    \hline
    \cite{bayraktar2025uniformintimeweakpropagationchaos}
    & Quantitative, UiT
    & \makecell{Optimal, $J^{- \frac{1}{2}}$ \\ in a weak convergence metric}
    & \makecell{Modification of the algorithm to \\ confine particles on a compact manifold}
    \end{tabular}
    \end{center}
    \caption{Comparison of uniform-in-time mean-field limit results for CBO. }
\label{table:comparison_mfl_uit_CBO}
\end{table}

To motivate our approach in this paper,
notice that in the simple case where~$\alpha = 0$ and $\sigma = 0$,
the CBO dynamics~\eqref{eq:cbo} may be rewritten in terms of a convex interaction potential:
\begin{equation}
    \label{eq:cbo_simple}
    \d X_t^j
    = - \frac{1}{J} \sum_{k=1}^{J}\Bigl(\xn{j}_t - \xn{k}_t \Bigr) \, \d t
    = - \nabla W \star \emp{t} (\xn{j}_t), \qquad W(x) := \frac{|x|^2}{2}.
\end{equation}
Using the approach developed by Malrieu in~\cite{MR1847094,MR2731396},
which builds upon Sznitman's synchronous coupling method,
it is relatively straightforward to prove uniform-in-time propagation of chaos for this simple system.
The CBO dynamics is of course more difficult to analyze,
but the validity of uniform-in-time propagation of chaos for~\eqref{eq:cbo_simple} (also when adding additive noise) is a good indication that
a similar result should hold for CBO more generally,
at least for sufficiently small $\sigma$,
which is precisely what we show in this paper.
The key idea of our approach is to view the CBO drift term
$- \bigl(\xn{j}_t - \wm\bigl( \emp{t} \bigr)\bigr)$
as a perturbation of the drift $- \bigl(\xn{j}_t - \nm\bigl( \emp{t} \bigr)\bigr)$ from~\eqref{eq:cbo_simple},
which exhibits the convexity of the interaction,
and to control the contributions of the remainder terms,
which involve the difference of $\nm$ and $\wm$,
using novel stability and concentration estimates.

\subsection{Our contributions}
\label{sub:our_contributions}

The main contribution of this work is a rigorous proof of uniform-in-time propagation of chaos for the CBO interacting particle system \eqref{eq:cbo},
without any modification to the original algorithm~\cite{carrillo2018analytical}.
Using a similar strategy,
we also prove a uniform-in-time stability estimate for the CBO interacting particle system.
Our proof follows the synchronous coupling approach by Sznitman and McKean \cite[Theorem 3.1]{ReviewChaintronII} and relies on a number of novel auxiliary results, such as concentration estimates for the interacting particle  system as well as a local Lipschitz estimate for the map $\mu \mapsto \wmmu-  \nm(\mu)$, where $\wmmu$ and $\nm(\mu)$ denote the weighted and usual means for the probability measure $\mu$ respectively.

To make the presentation of this paper as simple and self-contained as possible,
we focus exclusively on the case of bounded, globally Lipschitz continuous objective functions.
We like this setting because it enables to track the constant prefactors explicitly,
and to exhibit their dependence on parameters such as the problem dimension and the temperature parameter.
Extension of our results to more general cost functions is left for future work.

\paragraph{Plan of the paper}
The rest of the paper is organized as follows.
After presenting the key notation in~\cref{sec:notations},
we state the main results of this work in~\cref{sec:main}.
These results are then proved rigorously in~\cref{sec:proofs_main},
and the auxiliary results on which they rely are stated precisely and proved in~\cref{sec:aux}.
We conclude with possible future directions in~\cref{sec:conclusion}.
Finally, the Burkholder--Davis--Gundy inequality with explicit constants is recalled in~\cref{sec:bdg},
and a summary of the constants that appear in the key estimates of this work, as well as their dependence on method and problem parameters, are presented in~\cref{sec:constants}.

\small
\subsection{Notation}
\label{sec:notations}
\begin{itemize}[leftmargin=*]
    \item
        The Euclidean distance in $\real^d$ is denoted by~$| \placeholder |$.
        The notation $\| \placeholder \|_{\rm F}$ denotes the Frobenius norm on matrices.

    \item
        For a random variables $X$,
        the notation~$\expect X$ or~$\expect [X]$ denotes its expected value.
        We give the symbol~$\expect$ a precedence lower than exponents,
        so that expressions such as $\expect |X|^2$ and $\expect (\e^{X})^{\frac{1}{2}}$
        are short-hand notations for~$\expect \bigl[ |X|^2 \bigr]$ and $\expect \bigl[ (\e^{X})^{\frac{1}{2}} \bigr]$,
        respectively.

    \item
        The notation~$\mathcal P(\real^d)$ denotes the space of probability measures on $\real^d$,
        and the notation~$\mathcal P_p(\real^d)$ denotes the subset of probability measures $\mu \in \mathcal P(\real^d)$ with finite moments up to order~$p$.
        Furthermore, for joint probability measures $\rho^J \in \mathcal P(\real^{dJ})$ of $J$ particles in $\R^d$, $\mathcal P_{\rm sym}(\real^{dJ})$ denotes the subset of joint laws for which particles are exchangeable, i.e.\ probability measures $\rho^J$ that remain invariant under permutation of their $J$ variables.

    \item
        The notation $\wasserstein_p$ denotes the standard Wasserstein-$p$ distance.

    \item
        For a probability measure~$\mu \in \mathcal P_1(\real^d)$,
        the notation $\nm (\mu)$ denotes the usual mean under~$\mu$, hence,
    \[
        \nm (\mu) = \int_{\mathbb{R}^d} x \, \mu(dx) \,.
    \]
    
    \item
        We write $\allx{J}_t = (\xn{j}_t)_{j=1}^{J}$,
        and similarly $\allxl{J}_t = (\xnl{j}_t)_{j=1}^{J}$
        and $\allxt{J}_t = (\xnt{j}_t)_{j=1}^{J}$.
    
    \item 
        We denote by~$\cptmfl \in \mathbf{R}^d$ the consensus point of the mean-field dynamics.
        Given $J \in \mathbf N\,,$ we define its (random) finite particle counterpart $\cpt \in \mathbf{R}^d$ as the consensus point of $\allx{J}_t\,.$ Both $\cptmfl$ and $\cpt$ depend on the parameter $\alpha$.
    
    \item
        For a collection of positions $\allx{J}$ in $\real^d$,
        we denote by $\mu_{\allx{J}}$ the associated empirical measure.
        In particular
        \[
            \emp{t} := \frac{1}{J}\sum_{j = 1}^J \delta_{\xn{j}_t}\,  \quad \text{ and } \quad
            \empl{t} := \frac{1}{J}\sum_{j = 1}^J \delta_{\xnl{j}_t}\,.
        \]

    \item
        For a probability measure~$\mu \in \mathcal P_1(\real^d)$,
        we use the following notation for the central and raw moments under $\mu$:
        \[
            \label{eq:notation_mp}
            {p}\momentp{p}{\mu} := \int_{\real^d} \Bigl\lvert x - \mathcal M(\mu) \Bigr\rvert^p \, \mu(\d x)
            \quad \text{and} \quad
            \rawmomentp{p}{\mu}:= \int_{\real^d} \lvert x \rvert^p \, \mu(\d x).
        \]
        Recall that $\momentp{2}{\mu} \leq \rawmomentp{2}{\mu}$,
        and more generally $\momentp{p}{\mu} \leq 2^{p} \rawmomentp{p}{\mu}$ for $p\ge 1$.
        \item Many parameters will depend on the following rate: 
    \begin{align}
        \label{eq:intro:exp-decay-factor}
        \edecay{p} :=   p \left[1-  \frac{1}{2}\bigl(p-2+\tau(S) \bigr) \sigma^2 \Bigl(1 + \e^{\frac{\alpha}{p}(\overline f - \underline f)} \Bigr)^2\right] .
    \end{align}
\end{itemize}
\normalsize

\section{Main results}
\label{sec:main}

In this section,
we present our main results,
the proofs of which can be found in~\cref{sec:proofs_main}.
First, in~\cref{sub:uniform_chaos},
we establish uniform-in-time propagation of chaos for the CBO interacting particle system~\eqref{eq:cbo},
which constitutes the central theorem of this work.
Next, in~\cref{sub:uit_stability},
we derive a stability result for the same system. 
Finally, in~\cref{sub:exp_concentration_as},
we state a result concerning the exponential concentration of the interacting particle system around the consensus point,
and using~\cref{thm:uit-mfl},
we quantify the expected distance between the consensus point of the interacting particle system and that of the mean field dynamics.

\subsection{Uniform-in-time propagation of chaos}
\label{sub:uniform_chaos}

The mean-field results from \cite{gerber2023meanfield, fornasier2024consensus} did not make use of the contractive properties of the dynamics \eqref{eq:cbo},
yielding estimates that are useful only in finite time.
Inspired by the uniform-in-time mean-field limit from \cite{MR1847094}, we are able to show uniform-in-time propagation of chaos for CBO.
The main result is the following, 
the proof of which can be found in~\cref{sec:proof:thm:uit-mfl}.

\begin{theorem}
    \label{thm:uit-mfl}
    Fix a probability measure $\mfldis_0\in \mathcal{P}\bra{\R^d}$ with finite moments of all orders.
    Let $(\Omega, \mathcal{F}, \proba)$ be a probability space supporting initial i.i.d.\ positions~$\bigl( \xn{j}_0 \bigr)_{j\in\N}$ with common law $\mfldis_0$,
    as well as independent standard $d$-dimensional Brownian motions $\bigl( \wn{j}_t \bigr)_{j\in\N}$.
    Assume that $f$ satisfies~\cref{assumption:bounded,assumption:lip}.
    For each $J\in\N$, consider the particle system
    \begin{align}
        \xn{j}_t = \xn{j}_0 - \int_0^t \Bigl( \xn{j}_s - \wmx{s}\Bigr) \, \d s + \sigma \int_0^t S\Bigl(  \xn{j}_s - \wmx{s}  \Bigr)\, \d \wn{j}_s,
    \end{align}
    where $j \in \{1, \ldots, J\}.$
    To this system we couple the system of i.i.d.\ mean-field particles
    \begin{align}
        \label{eq:thm-mfl:synchronously-coupled-system}
        \xnl{j}_t = \xn{j}_0 - \int_0^t \left( \xnl{j}_s - \wmeanmfl{s} \right) \, \d s + \sigma \int_0^t  S\bra*{ \xnl{j}_s - \wmeanmfl{s} } \, \d \wn{j}_s,
    \end{align}
    where $j \in \{1, \ldots, J\}$ and $\mfldis_s = \operatorname{Law}\bigl( \xnl{j}_s \bigr)$.
    The mean field particles are initialized at the same positions and driven by the same Brownian motions as $(\xn{j}_t)_{j\in\N}$.
    Assume that $\sigma \in [0,\widetilde{\sigma})$, where
    \begin{align}
        \label{eq:noise-upper-bound}
        \widetilde{\sigma} \coloneq \frac{\sqrt{2}}{\sqrt{6+ 3\tau(S)}\bra*{1+ \e^{\frac{\alpha}{2}\bra*{\overline{f}-\underline{f}}}}}.
    \end{align}
    Then there exists a finite constant $C_{\rm MFL}$ such that
    \begin{align}
        \forall t \geq 0, \qquad
        \forall J \in \nat_{>0}, \qquad
        \expect \pra*{ \abs{ \xn{j}_t - \xnl{j}_t}^2} \le
        \frac{C_{\rm{MFL}} }{J}\,,
    \end{align}
    where $C_{\rm MFL}:= \e^{ {2 c_1}} {2 c_2}$, and the constants $c_1, c_2>0$ are defined in \eqref{eq:constants-c1} and \eqref{eq:constants-c2}, see also \cref{sec:constants}.
\end{theorem}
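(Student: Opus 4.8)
The plan is to run a synchronous coupling argument of Sznitman--McKean type, but, following the strategy announced at the end of \cref{sec:propagation_chaos}, to split the one-particle error into a component that genuinely contracts and a component that does not, controlling the latter by the new stability estimate for the weighted mean together with concentration estimates for the i.i.d.\ mean-field ensemble. Write $e^j_t := \xn{j}_t - \xnl{j}_t$ (so $e^j_0 = 0$), $\overline e_t := \tfrac1J\sum_{j=1}^J e^j_t$, and $v_t := \wmx{t} - \wmeanmfl{t}$. Since the two coupled systems use the same Brownian motions, their drift difference is $-e^j_t + v_t$; using $\norm{S(x)-S(y)}_{\mathrm{F}}^2 \le \tau(S)\abs{x-y}^2$, Itô's formula for $\abs{e^j_t}^2$ followed by averaging over $j$ and taking expectations gives
\[
\frac{\d}{\d t}\,\E\abs{e^1_t}^2 \;\le\; -2\,\E\abs{e^1_t}^2 \;+\; 2\,\E\myip{\overline e_t, v_t} \;+\; \sigma^2\tau(S)\,\E\!\Bigl[\tfrac1J\textstyle\sum_{j=1}^J\abs{e^j_t - v_t}^2\Bigr],
\]
where $-2\,\E\abs{e^1_t}^2$ is the dissipation generated by the confining part $-\xn{j}_t$ of the CBO drift.

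The decisive observation is that $v_t$ does not depend on $j$, so the \emph{centered} error $e^j_t - \overline e_t$ solves $\d(e^j_t - \overline e_t) = -(e^j_t - \overline e_t)\,\d t + (\text{martingale})$, the term $v_t$ cancelling; hence $\Psi(t) := \E[\tfrac1J\sum_j\abs{e^j_t - \overline e_t}^2]$ satisfies the genuinely dissipative bound $\Psi'(t) \le -(2 - 2\sigma^2\tau(S))\Psi(t) + 2\sigma^2\tau(S)\,\E\abs{r_t}^2$ with $r_t := v_t - \overline e_t$, the assumption $\sigma < \widetilde\sigma$ guaranteeing $2 - 2\sigma^2\tau(S) > 0$. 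The mean part is not dissipated (the $\overline e_t$-component of $v_t$ cancels its would-be contraction), but $-\overline e_t + v_t = r_t$ gives $\overline e_t = \int_0^t r_s\,\d s + M_t$ with $M_t$ a martingale of quadratic variation $O\!\left(J^{-1}(\Psi(s) + \E\abs{r_s}^2)\right)$. Everything thus reduces to estimating $\E\abs{r_t}^2$, for which I would use $r_t = A_t + B_t + \overline N_t$, where $A_t := (\wm - \nm)(\emp{t}) - (\wm - \nm)(\empl{t})$, $\,B_t := (\wm - \nm)(\empl{t}) - (\wm - \nm)(\mfldis_t)$, and $\overline N_t := \nml{t} - \meanmfl{t}$.

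The term $A_t$ is bounded by the stability estimate for the weighted mean, applied through the identity coupling: $\abs{A_t} \lesssim \alpha L_f\,\e^{\alpha(\overline f-\underline f)}\sqrt{(\text{variance of the interpolating configuration})}\,\bigl(\tfrac1J\sum_j\abs{e^j_t}^2\bigr)^{1/2}$, the crucial feature being the extra variance factor. Here the uniform-in-time moment bounds, and above all the \emph{exponential decay of the centred second moments} $\momentp{2}{\mfldis_t}\lesssim\e^{-\kappa t}$ and $\E\,\emplmomentp{2}{t}\lesssim\e^{-\kappa t}$ (valid because, for $\sigma < \widetilde\sigma$, the multiplicative noise is dominated by the confinement at the level of the centred second moment) enter: the coefficient multiplying $\E\abs{e^1_t}^2$ in $\E\abs{A_t}^2$ then decays exponentially in $t$. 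The fluctuation terms $B_t$ and $\overline N_t$ are Monte-Carlo fluctuations of (nonlinear) functionals of the $J$ i.i.d.\ mean-field particles; the concentration estimate, which has to be carried out directly rather than through $\wasserstein_2$ to avoid a dimension-cursed rate, gives $\E\abs{B_t}^2 + \E\abs{\overline N_t}^2 \lesssim J^{-1}\momentp{2}{\mfldis_t}\lesssim J^{-1}\e^{-\kappa t}$ up to polynomial-in-moments prefactors. Altogether $\E\abs{r_t}^2 \le \varepsilon_1(t)\,\E\abs{e^1_t}^2 + \varepsilon_2(t)\,J^{-1}$ with $\varepsilon_1,\varepsilon_2$ integrable on $[0,\infty)$. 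Feeding this into the $\Psi$-inequality and into $\overline e_t = \int_0^t r_s\,\d s + M_t$, both $\Psi(t)$ and $\E\abs{\overline e_t}^2$ come out $O(J^{-1})$ uniformly in $t$ — finiteness of $\int_0^\infty(\E\abs{r_s}^2)^{1/2}\d s$ being exactly what integrability of $\varepsilon_1,\varepsilon_2$ delivers — and a Grönwall argument for $\E\abs{e^1_t}^2 = \Psi(t) + \E\abs{\overline e_t}^2$, with the fourth moment of the error controlled simultaneously by the same scheme, then yields $\E\abs{e^1_t}^2 \le 2c_2\,\e^{2c_1}/J$, where $2c_1 \asymp \int_0^\infty\varepsilon_1$ encodes the accumulated (summable) non-dissipative feedback and $c_2$ the Monte-Carlo source.

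The main obstacle is precisely the absence of self-contraction of the empirical mean $\overline e_t$, compounded by the fact that the stability constant of the weighted mean unavoidably carries a factor $\e^{\alpha(\overline f-\underline f)}$ — the worst-case Gibbs weight is $\e^{\alpha(\overline f-\underline f)}/J$ — and is therefore not small, so that a naive estimate of $\E\myip{\overline e_t, v_t}$ would overwhelm the dissipation. What rescues the argument is that the problematic contribution $A_t$ carries an additional factor equal to the square root of the configuration's variance, which decays exponentially once $\sigma < \widetilde\sigma$, rendering the non-dissipative feedback integrable in time; establishing this exponential decay of the centred second moments of both systems, and the Monte-Carlo-rate concentration estimates for $B_t$ and $\overline N_t$ (together with the auxiliary fourth-moment bounds they require), constitute the technical heart of the proof and are carried out among the auxiliary results of \cref{sec:aux}.
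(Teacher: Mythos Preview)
Your overall strategy is the paper's: synchronous coupling, the stability estimate for $\wm-\nm$ to control the non-Lipschitz part of the interaction, exponential decay of centred second moments to make the resulting feedback integrable in time, and Monte Carlo estimates for the i.i.d.\ fluctuations. Your explicit centred/mean splitting $e^j_t=(e^j_t-\overline e_t)+\overline e_t$ is a presentational variant of the paper's use of Huygens' identity inside the differential inequality for $\mathcal E_t$; the two routes face the same bottleneck.

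There is, however, a real gap in your treatment of $A_t$. The stability estimate gives the \emph{pathwise} bound
\[
    |A_t|^2 \;\lesssim\; \Bigl(\empmomentp{2}{t}+\emplmomentp{2}{t}\Bigr)\,\mathcal E_t,
\]
and to conclude $\expect|A_t|^2 \le \varepsilon_1(t)\,\expect\mathcal E_t$ with a deterministic decaying $\varepsilon_1$ you must decouple a product of two correlated random variables. Decay of the variance factor \emph{in expectation} --- which is all you invoke --- does not do this: Cauchy--Schwarz would produce $(\expect\mathcal E_t^2)^{1/2}$ instead, and your ``fourth moment of the error controlled simultaneously by the same scheme'' then demands the same argument one level up, a bootstrap you never close (and which would not terminate without further input). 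You also only cite decay of $\momentp{2}{\mfldis_t}$ and $\expect\,\emplmomentp{2}{t}$, whereas the bound for $A_t$ involves $\empmomentp{2}{t}$, the variance of the \emph{interacting}, non-i.i.d., particle system.

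The paper closes this gap by proving concentration inequalities of the form
\[
    \proba\Bigl[\sup_{t\ge 0}\e^{\kappa t}\empmomentp{2}{t}\ge C\Bigr]\;\lesssim\; J^{-q/2}
\]
for the interacting system, and an analogous one for the coupled mean-field ensemble. This yields \emph{almost-sure} exponential decay of the variance factor on a good set, on which one gets $\expect\bigl[\one_{\text{good}}\cdot(\text{variance})\cdot\mathcal E_t\bigr]\le C\e^{-\kappa t}\,\expect\mathcal E_t$ directly; on the complementary bad set of probability $O(J^{-2})$, Cauchy--Schwarz against uniform raw eighth-moment bounds produces an $O(J^{-1}\e^{-\lambda t})$ source term. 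This good-set/bad-set mechanism --- and specifically the concentration inequality for the interacting particles, which does not follow from i.i.d.\ considerations --- is the missing ingredient in your sketch.
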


\begin{remark}
    For anisotropic noise, the constant $C_{\rm MFL}$ does not depend on the dimension $d$, while it does for isotropic noise. Note that even in the case of isotropic noise, one may choose a sufficiently small noise coefficient $\sigma$ to off-set this dimension dependence.
\end{remark}

\subsection{Almost uniform-in-time stability for the interacting particle system}
\label{sub:uit_stability}

We proceed to show an almost uniform-in-time stability result for the interacting particle system~\eqref{eq:cbo}.
For the proof see~\cref{sub:proof-stab-particle}.

\begin{theorem}
    \label{thm:stab_particle}
    Assume that $\sigma \in [0,\widetilde{\sigma})$ with $\widetilde{\sigma}$ defined as in \eqref{eq:noise-upper-bound},
    and that~$f$ satisfies \cref{assumption:bounded,assumption:lip}.
    Consider two copies~$(\xn{j}_t)_{j=1}^J$ and~$(\xnt{j}_t)_{j=1}^J$ of the particle system~\eqref{eq:cbo} driven by the same Brownian motions~$(\wn{j}_t)_{j=1}^J$ but with possibly different i.i.d.\ initial conditions.
    More precisely, $\bra[\big]{\xn{j}_0}_{j \in \nat}$ are drawn i.i.d.\ from some $\rho_0 \in \mathcal P_{8q}(\real^d)$ and $\bra[\big]{\xnt{j}_0}_{j\in \nat}$ are drawn i.i.d.\ from $\widetilde{\rho}_0 \in \mathcal P_{8q}(\real^d)$ for some $q \geq \frac{1}{2}$.
    Then there exist finite constants~$C_{\rm Stab, 1}, C_{\rm Stab, 2}$ independent of~$J$ such that for all $t\ge 0$ and $J \in \nat_{>0}$, we have
    \begin{align}
        \expect \pra*{ \frac{1}{J} \sum_{j=1}^{J} \abs*{ \xn{j}_t - \xnt{j}_t }^2}
         & \leq  C_{\rm Stab, 1} \expect \pra*{ \frac{1}{J} \sum_{j=1}^{J} \abs*{ \xn{j}_0 - \xnt{j}_0 }^2}
        +
        \frac{ C_{\rm Stab, 2}}{J^q},
    \end{align}
    where $C_{\rm Stab, 1} = \exp \left( \frac{16 \tilde{c}_1}{\edecay{8} } \right) $ and $C_{\rm Stab, 2} = \frac{16 \tilde{c}_2}{\edecay{8} } \exp \left( \frac{16 \tilde{c}_1}{\edecay{8} } \right)$ for $\tilde{c}_1, \tilde{c}_2$ defined in \eqref{eq:constants-cstab1} and \eqref{eq:constants-cstab2},
    and $\lambda_8$ as defined in~\eqref{eq:intro:exp-decay-factor}, see also \cref{sec:constants}.
\end{theorem}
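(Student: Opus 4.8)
The plan is to run Sznitman's synchronous-coupling argument, in the uniform-in-time form of Malrieu, directly on the two copies of~\eqref{eq:cbo}, exploiting the viewpoint from the end of~\cref{sec:propagation_chaos}: the CBO drift $-\bigl(\xn{j}_t - \wm(\emp{t})\bigr)$ is treated as a perturbation of the convex interaction drift $-\bigl(\xn{j}_t - \nm(\emp{t})\bigr)$ of~\eqref{eq:cbo_simple}. Write $Z^j_t := \xn{j}_t - \xnt{j}_t$, let $\bar Z_t := \tfrac1J\sum_{j=1}^J Z^j_t = \nm(\emp{t}) - \nm(\empt{t})$ be the discrepancy of the two empirical means, and split the quantity of interest as
\[
    D_t := \tfrac1J \sum_{j=1}^J \abs{Z^j_t}^2 = \abs{\bar Z_t}^2 + V_t, \qquad V_t := \tfrac1J \sum_{j=1}^J \abs{Z^j_t - \bar Z_t}^2 .
\]
The reason for this split is that the empirical spread of the differences $V_t$ is genuinely dissipative, whereas the discrepancy of the centres of mass $\abs{\bar Z_t}^2$ is not, and this is exactly where the missing uniform convexity bites. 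Introduce also the remainder $R_t := \bigl(\wm(\emp{t}) - \nm(\emp{t})\bigr) - \bigl(\wm(\empt{t}) - \nm(\empt{t})\bigr)$, so that $\wm(\emp{t}) - \wm(\empt{t}) = \bar Z_t + R_t$; morally $R_t$ is slaved to how spread-out the two systems are and should die as they reach consensus.

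Treating $V_t$ first: since the weighted-mean drift is the same for all particles, the centred differences solve $\d\bigl(Z^j_t - \bar Z_t\bigr) = -\bigl(Z^j_t - \bar Z_t\bigr)\,\d t + \text{(martingale)}$, and Itô's formula, together with taking expectations (the stochastic integral is a true martingale by the moment bounds) and expanding $S(\cdot) - S(\cdot)$ using the prefactor $\tau(S)$ from~\eqref{eq:noise-prefactor}, yields a differential inequality of the form
\[
    \frac{\d}{\d t}\expect V_t \le -\bigl(2 - c\,\sigma^2\bigr)\expect V_t + c\,\sigma^2\,\expect \abs{R_t}^2 ,
\]
with $c = c\bigl(\tau(S),\alpha,\overline f-\underline f\bigr)$, the coefficient $2 - c\sigma^2$ being strictly positive because $\sigma \in [0,\widetilde\sigma)$; so $\expect V_t$ decays, driven only by the small source $\expect\abs{R_t}^2$. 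For $\abs{\bar Z_t}^2$, substituting $\wm(\emp{t}) - \wm(\empt{t}) = \bar Z_t + R_t$ makes the would-be contraction cancel exactly, leaving
\[
    \frac{\d}{\d t}\expect \abs{\bar Z_t}^2 \le 2\,\expect\scp{\bar Z_t, R_t} + \frac{c\,\sigma^2}{J}\bigl(\expect V_t + \expect\abs{R_t}^2\bigr) ;
\]
thus $\abs{\bar Z_t}^2$ does not decay, but it is forced only by the small term $\scp{\bar Z_t, R_t}$ and by an $O(1/J)$ martingale correction that is itself decaying.

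To close the estimate I would control $R_t$ with the \emph{novel stability estimate for the weighted mean} (an auxiliary result of~\cref{sec:aux}): it bounds $\abs{R_t}$ by $\wasserstein_2(\emp{t}, \empt{t}) \le \sqrt{D_t}$ times a coefficient $\eps_t$ that depends only on $\alpha$, $L_f$, $\overline f - \underline f$ and the empirical spreads $\momentp{2}{\emp{t}}, \momentp{2}{\empt{t}}$, and that vanishes as the two systems reach consensus. The complementary input is a quantitative concentration estimate for the interacting particle system around its empirical mean — available precisely because $\sigma < \widetilde\sigma$ — showing that these empirical spreads, and more generally all empirical moments up to order $8q$, decay exponentially in expectation with rate $\edecay{p}$; this is the role of the hypotheses $\rho_0,\widetilde\rho_0\in\mathcal P_{8q}(\real^d)$ and of the rate $\edecay{8}$ in the constants. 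Consequently $\eps_t$ is exponentially small, hence integrable on $[0,\infty)$. Substituting $\abs{R_t}\le\eps_t\sqrt{D_t}$ together with $\abs{\bar Z_t}^2\le D_t$, $V_t\le D_t$ into the two displays and adding them, the dissipative $-V_t$ term more than compensates the $\sigma^2$-diffusion bracket and one is left with a Grönwall inequality $\tfrac{\d}{\d t}\expect D_t \le \psi(t)\,\expect D_t + (\text{const})\,J^{-q}$ where $\psi\ge0$ but $\int_0^\infty\psi<\infty$, which integrates to $\expect D_t \le \exp\bigl(\int_0^\infty\psi\bigr)\bigl(\expect D_0 + (\text{const})\,J^{-q}\bigr)$, the asserted bound; tracking the constants explicitly gives the stated $\exp(16\tilde{c}_1/\edecay8)$ form. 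To obtain the rate $J^{-q}$ rather than merely $J^{-1}$, the same computation is carried out on $\expect D_t^q$, so that the $O(1/J)$ martingale corrections enter raised to the power $q$ via Burkholder--Davis--Gundy — which is where the full $8q$-th moment control is used.

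\textbf{Main obstacle.} The heart of the matter is the lack of uniform convexity. Carried out naively on $D_t$ itself, the synchronous-coupling computation yields at best $\tfrac{\d}{\d t}\expect D_t \le -2\expect D_t + 2\expect\abs{\bar Z_t}^2 + (\text{remainder})$, which gives no contraction: the two centres of mass are essentially free — exactly conserved when $\alpha=\sigma=0$ — and can stay $O\bigl(\sqrt{\expect D_0}\bigr)$ apart for all time, so the amplification factor $C_{\rm Stab,1}\ge1$ is unavoidable; moreover, without the mean/fluctuation split, the $\sigma^2$ diffusion bracket induces an uncontrolled constant-rate growth of $\expect D_t$. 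What rescues the argument is that the perturbation $R_t$ is slaved to the spread of the two systems and therefore vanishes as they reach consensus; turning this into a uniform-in-time statement requires the two genuinely new auxiliary inputs — the concentration-sensitive stability estimate for $\wm$ and the uniform-in-time exponential decay of the particle system's empirical moments, both hinging on the smallness $\sigma<\widetilde\sigma$ — quantified to moment order $8q$ so as to also deliver the $J^{-q}$ correction. The rest is explicit bookkeeping of constants.
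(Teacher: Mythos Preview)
Your overall strategy is right and closely matches the paper: synchronous coupling, the split of $\wm-\nm$ into mean plus remainder $R_t$, bounding $R_t$ via the stability estimate~\cref{lem:stab_wmean}, and an integrable-in-time Grönwall argument. Your mean/fluctuation decomposition $D_t = |\bar Z_t|^2 + V_t$ is exactly the Huygens identity~\eqref{eq:huygens} that the paper uses, just organised slightly differently (the paper keeps a single differential inequality for $\mathcal G_t = D_t$ and applies Huygens inside it, rather than tracking $V_t$ and $|\bar Z_t|^2$ separately).

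\medskip

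There is, however, a genuine gap in how you obtain the $J^{-q}$ rate and, relatedly, how you handle the random coefficient $\eps_t$. You write that the empirical spreads ``decay exponentially in expectation'' and then substitute $|R_t|\le \eps_t\sqrt{D_t}$ into the Grönwall estimate as if $\eps_t$ were deterministic. But $\eps_t \propto \sqrt{\empmomentp{2}{t}} + \sqrt{\emptmomentp{2}{t}}$ is \emph{random} and correlated with $D_t$, so exponential decay of $\expect[\eps_t^2]$ alone does \emph{not} yield $\expect[\eps_t^2 D_t] \le \psi(t)\,\expect D_t$. The paper's mechanism is a good/bad set split: on the complement of a ``bad'' event $\Omega_\kappa^\star$ one has the \emph{pathwise} bound $\eps_t^2 \lesssim \e^{-\kappa t}$ (this is the content of the concentration inequality~\cref{prop:bound-on-bad-set-general-noise}, not merely decay in expectation), which gives the integrable $\psi$; on the bad set one uses Cauchy--Schwarz, $\proba[\Omega_\kappa^\star]^{1/2}\,\expect[\eps_t^4 D_t^2]^{1/2}$, with the second factor controlled by decaying $8$-th centered moments and uniform raw-moment bounds, and the first factor $\lesssim J^{-q}$ because $\proba[\Omega_\kappa^\star] \lesssim J^{-\tilde q/2}$ with $\tilde q = 4q$. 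This is precisely where the $8q$-moment hypothesis enters: \cref{prop:bound-on-bad-set-general-noise} with parameter $\tilde q = 4q$ needs $\mfldis_0 \in \mathcal P_{2\tilde q} = \mathcal P_{8q}$.

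Your proposed alternative --- carrying the computation on $\expect D_t^q$ and invoking Burkholder--Davis--Gundy so that the $O(1/J)$ martingale corrections appear to the power $q$ --- does not deliver $J^{-q}$. The $1/J$ in your equation for $\tfrac{\d}{\d t}\expect|\bar Z_t|^2$ is an It\^o correction (quadratic variation of an average of $J$ martingales), not a martingale increment; applying It\^o to $|\bar Z_t|^{2q}$ produces a correction of size $q(2q-1)|\bar Z_t|^{2q-2}\cdot O(1/J)$, still linear in $J^{-1}$. The paper therefore never works with $\expect D_t^q$; it stays with $\expect D_t$ throughout, and the $J^{-q}$ comes entirely from the smallness of $\proba[\Omega_\kappa^\star]$.
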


\subsection{Exponential concentration of the particle system close to target}
\label{sub:exp_concentration_as}

In this section, we prove concentration around a consensus point close to the target directly on the particle level,
complementing the results~\cite{carrillo2018analytical,fornasier2024consensus} for the mean-field dynamics. It is by now well-established that, under suitable conditions, the mean-field CBO dynamics \eqref{eq:mfl_sde_first} concentrate at a mean-field consensus point $\cptmfl\in \R^d$ \cite[Theorems~4.1 and Remark~4.3]{carrillo2018analytical}: For well-behaved objectives $f$, small enough $\sigma>0$ and large enough $\alpha>0$, it holds that $\momentp{2}{\mfldis_t}\to 0 $ and $\meanmfl{t}\to \cptmfl$ in the limit as~$t\to \infty$. Further, the Laplace principle enables to prove that the mean-field consensus point $\cptmfl$ approaches the global minimizer of the objective function $f$ as $\alpha$ increases~\cite[Theorems 4.2]{carrillo2018analytical}: For well-behaved objectives $f$ attaining a unique global minimum at $x_{\text{min}} \coloneq \argmin_{x\in\R^d} f(x) \in \supp \mfldis_0$ and for any given $0<\eps\ll 1$, there exist parameters~$\sigma>0$ and $\alpha > 1$ such that $\cptmfl\in B_\eps(x_{\text{min}})$. In this section, we show almost sure concentration around a consensus point $\cpt\in\R^d$ for the particle dynamics \eqref{eq:cbo}, and prove that $\cpt$ converges in $L^2(\Omega)$ to the mean-field consensus point $\cptmfl$ as the number of particles $J$ increases, provided that consensus-formation holds for the mean-field dynamics, that is to say under the assumption that $\wasserstein_2\bra[\big]{\mfldis_t, \delta_{\cptmfl}} \to 0$ as $t \to \infty$. 
The proof of the following theorem can be found in \cref{sec:proof:microscopic-concentration-around-consensus-point}.

\begin{theorem}
    \label{thm:microscopic-concentration-around-consensus-point}
    Fix $J\in\N$ and consider~\eqref{eq:cbo} with i.i.d. initial conditions sampled from some $\mfldis_0\in \mathcal P_4(\R^d)$.
    Assume that $\sigma >0$ is sufficiently small to guarantee that $\edecay{2},\edecay{4}>0$, with $\edecay{2}, \edecay{4}$ as defined in \eqref{eq:intro:exp-decay-factor}.
    Then there exists an $\R^d$-valued random variable $\cpt$ such that $\lim_{t \to +\infty} \meanx{t} = \cpt$ almost surely.
    In addition, the following statements hold:
    \begin{enumerate}[label=(\alph*)]
        \item\label{item:almost-sure-exponential-concentration} 
            (\textbf{Almost sure exponential concentration around the consensus point})            
            For all $\gamma\in (0,\frac{\edecay{2}}{2})$ and all~$\widetilde \gamma \in (0,\min\{\frac{\edecay{2}}{2}, \frac{\edecay{4}}{4} \})$,  
            it holds almost surely that
               \begin{subequations} \label{eq:exp-conv-of-emp-mean-and-particle}
                  \begin{align}
                      \label{eq:as_conv_mean}
                       & \lim_{t \to \infty} \e^{\gamma t} \abs*{\meanx{t}-\cpt} = 0  \\
                      \label{eq:as_conv_wmean}
                       & \lim_{t \to \infty} \e^{\widetilde \gamma t} \abs*{\wmeanx{t}-\cpt} = 0 \\
                      \label{eq:as_conv_particles}
                       & \lim_{t \to \infty} \e^{\widetilde \gamma t} \abs*{\xn{j}_t - \cpt} = 0.
                  \end{align}
              \end{subequations}
              In particular, we have almost sure convergence of the empirical mean towards the consensus point $\cpt$.
        \item\label{item:mean-square-exponential-concentration} 
            (\textbf{Mean square exponential convergence to the consensus point})
            It holds for any~$\gamma \in (0, \edecay{2})$ that
            \begin{subequations}
                \label{eq:L2-conv}
                \begin{align}
                    \label{eq:L2-conv-means}
                          &\expect \abs*{\meanx{t} - \cpt}^2 \le C_{\gamma} \e^{- \gamma t}, \\
                    \label{eq:L2-conv-wmeans}
                          &\expect \abs*{\wmx{t} - \cpt}^2  \le 2\Bigl(\e^{\alpha (\overline{f}-\underline{f})} \expect\empmomentp{2}{0} + C_{\gamma} \Bigr) \e^{- \gamma t}, \\
                          \label{eq:L2-conv-particles}
                          &
                          \expect \abs*{\xn{j}_t - \cpt}^2 \le 2 \Bigl(\expect\empmomentp{2}{0} + C_{\gamma}\Bigr) \e^{- \gamma t },
                \end{align}
            \end{subequations}
            where
            \begin{align}
                C_{\gamma} \coloneq  
                \left(\frac{2\e^{\alpha (\overline{f}-\underline{f})}}{\gamma (\edecay{2} - \gamma)}  
                + 
                \frac{4\sigma^2\tau(S)}{\edecay{2} J} \bra*{1 + \e^{\alpha (\overline f- \underline f)}} \right)\expect \empmomentp{2}{0}  \, .
        \end{align}

        \item\label{item:consensus-point-close-to-target} (\textbf{Mean-field behavior of the consensus point})
            Consider $\cpt$ the consensus point of the particle dynamics~\eqref{eq:cbo}, and assume that the mean-field dynamics 
            \eqref{eq:mfl_pde_first} for $\mfldis_t$ reaches a consensus point $\cptmfl$ in the sense that~$
        \wasserstein_2\bra[\big]{\mfldis_t, \delta_{\cptmfl}} \rightarrow 0 
    $ as $t \to \infty$.
            Assume $\expect \rawempmomentp{2}{0}<\infty$. 
    Then 
            \begin{align}
                \label{eq:consensus-point-vs-target}
                \expect\pra*{ \abs*{\cpt - \cptmfl}^2 } \le \frac{C_{\rm MFL}}{J}\,,
            \end{align}
            where $C_{\rm MFL}$ is the constant from \cref{thm:uit-mfl}.
    \end{enumerate}
\end{theorem}

\subsection{Comments on the proof strategy}

We will prove \cref{thm:uit-mfl} via a synchronous coupling approach based on~\cite{MR1847094}; see also \cite{MR1108185} and~\cite[Section 3.1.3]{ReviewChaintronII}.
Using the contractive nature of the CBO particle system,
we will show an estimate of the form
\begin{align}
    \label{eq:comment_proof_first}
    \expect \pra*{  \bigl\lvert \xn{j}_t - \xnl{j}_t \bigr\rvert^2}
    \le C_1 \int_0^t \expect \pra*{ \bigl\lvert  \xn{j}_s - \xnl{j}_s \bigr\rvert^2} \e^{-a s}  \d s  + C_2 J^{-1},
\end{align}
for some constants $C_1, C_2, a>0$,
from which the claim then easily follows by Gr\"onwall's inequality.
The proof of \eqref{eq:comment_proof_first} is based on the following results:
\begin{itemize}[leftmargin=*]
    \item
          \textbf{Local Lipschitz continuity of~$\mu \mapsto \wmmu-  \nm(\mu)$}.
          One of the main difficulties for proving uniform-in-time mean-field results for CBO is that
          while the mean operator~$\mathcal M(\placeholder)$ is globally Lipschitz continuous with Lipschitz constant 1 for the Wasserstein metric,
          \[
              \forall \mu, \nu \in \mathcal P_1(\real^d), \qquad
              \abs[\big]{ \mathcal M(\mu) - \mathcal M(\nu) } \leq \wasserstein_1 (\mu, \nu),
          \]
          the weighted mean $\wm(\placeholder)$ is in general not globally Lipschitz continuous with constant 1.
          In the recent work~\cite{huang2024uniformintimemeanfieldlimitestimate},
          the problem is circumvented by altering the dynamics.
          Here, we will prove and exploit the following stability estimate (see \cref{lem:stab_wmean}),
          which holds for all~$\mu, \nu \in \mathcal P_2(\real^d)$:
          \begin{equation}
              \label{eq:stab_summary}
              \left\lvert \wmmu - \mathcal M(\mu) - \wmnu + \mathcal M(\nu) \right\rvert
              \leq C_{\mathcal M}
              \left(  \sqrt{\momentp{2}{\mu}} + \sqrt{\momentp{2}{\nu}}  \right)
              \wasserstein_2 (\mu, \nu).
          \end{equation}
          This estimate allows us to control the remainder terms that arise
          when the weighted means are treated as perturbations of ordinary means.

    \item \textbf{Exponential decay of centered moments.}
          To obtain the exponentially decaying prefactors in~\eqref{eq:comment_proof_first},
          we show that under the conditions of~\cref{thm:uit-mfl},
          the following estimates hold:
          \begin{subequations}
              \label{eq:moments_summary}
              \begin{align+}
                  \expect \left[ \empmomentp{p}{t} \right] &\leq \expect \left[ \empmomentp{p}{0} \right] \e^{-\edecay{p} t},
                  \\
                  \expect  \Bigl\lvert \xl_t - \expect \xl_t \Bigr\rvert^{p}
                  &\leq \expect  \Bigl\lvert \xl_0 - \expect \xl_0 \Bigr\rvert^{p} \e^{- \edecay{p} t},
              \end{align+}
          \end{subequations}
          where $\lambda_p$ is given by \eqref{eq:intro:exp-decay-factor}.
          For precise statements and proofs,
          see \cref{lem:decay-centered-moments,thm:mfl-decay-p-p-cetered-moment}.

    \item
          \textbf{Uniform-in-time control of raw moments.}
          In~\cref{lem:uit-raw-moments-2norm,lem:uit-raw-moments-synchronous-system} we prove the following uniform-in-time bounds on the moments of the interacting particle system and associated mean-field dynamics:
          \begin{align}\label{eq:raw_moments_summary}
              \expect \biggl[ \sup_{t \geq 0} \Bigl\lvert \xn{j}_t \Bigr\rvert^p \biggr]^{\frac{1}{p}}
              \leq
              \craw{p} \expect \biggl[ \Bigl\lvert \xn{j}_0 \Bigr\rvert^p \biggr]^{\frac{1}{p}}
              \,, \qquad
              \expect \biggl[ \sup_{t \geq 0} \bigl\lvert \xl_t \bigr\rvert^p \biggr]^{\frac{1}{p}}
              \leq \craw{p} \expect \biggl[ \bigl\lvert \xl_0 \bigr\rvert^p \biggr]^{\frac{1}{p}}\,.
          \end{align}
    \item
          \textbf{Concentration inequalities.}
          We would like to apply the stability estimate~\eqref{eq:stab_summary} to $\mu=\empl{t}$ and $\nu=\emp{t}$.
          In order to control the bracketed factor on the right-hand side of~\eqref{eq:stab_summary},
          we show that $\momentp{2}{\empl{t}}$ and $\momentp{2}{\emp{t}}$ decay not only in expectation,
          but also almost surely in the complement of an event of small probability.
          To this end, we prove in \cref{prop:bound-on-bad-set-general-noise,prop:bound-on-bad-set-synchronous-mf}
          the following concentration inequalities,
          which hold under appropriate assumptions for all $q \geq 2$ and $\kappa < \min\left\{ \edecay{2}, \frac{\edecay{2q}}{q} \right\}$:
          \begin{equation}
              \label{eq:intro:concentration-summary}
              \proba \left[ \sup_{t \geq 0} \e^{\kappa t} \empmomentp{2}{t} \geq \momentp{2}{\mfldis_0} + 1 \right]
              \lesssim J^{- \frac{q}{2}}, \qquad
              \proba \left[ \sup_{t \geq 0} \e^{\kappa t} \emplmomentp{2}{t} \geq \momentp{2}{\mfldis_0} + 1 \right]
              \lesssim J^{- \frac{q}{2}}.
          \end{equation}
          Here, $\emplmomentp{2}{t}$ denotes the second centered moment of the empirical measure $\empl{t}$ of $J$ i.i.d. mean-field particles.

    \item
          \textbf{Monte Carlo convergence of the weighted mean.}
          Finally, we use an estimate stating that,
          for any probability measure $\pi \in \mathcal P_p(\R^d)$ and any $p \geq 2$,
          \begin{align}
              \label{eq:intro:mc_convergence}
               & \expect  \left\lvert
              \wm \bigl( \mu_{\mathcal Z^J} \bigr)
              - \wm\bigl(\pi\bigr)
              \right\rvert_p^p
              \leq \cwm{p}
              \expect  \Bigl\lvert \zn{1} - \expect \zn{1} \Bigr\rvert_p^{p} J^{- \frac{p}{2}},
              \qquad \mu_{\mathcal Z^J} \coloneq \frac{1}{J}\sum_{j=1}^{J} \delta_{Z^j},
              \qquad
              \left\{ \zn{j} \right\}_{j \in \N} \stackrel{\rm{i.i.d.}}{\sim} \pi \,.
          \end{align}
          A similar bound was already proved in \cite{gerber2023meanfield},
          but here we make the dependence of the sampling error on $\pi$  more explicit.
          For the precise description of the assumptions under which \eqref{eq:intro:mc_convergence} holds,
          see \cref{lem:convergence_weighted_mean_iid-2}.
\end{itemize}

The proof of \cref{thm:stab_particle} closely parallels that of \cref{thm:uit-mfl}.
Using again a synchronous coupling approach and leveraging the contractive property of the CBO particle system,
we prove an estimate analogous to \eqref{eq:comment_proof_first}, but now with~$\xnl{j}_t$ substituted with~$\xnt{j}_t$:
\begin{align}
    \label{eq:comment_proof_second}
    \expect \pra*{  \bigl\lvert \xn{j}_t - \xnt{j}_t \bigr\rvert^2}
    \le \expect \pra*{ \bigl\lvert  \xn{j}_0 - \xnt{j}_0 \bigr\rvert^2 } + \widetilde{C}_1 \int_0^t \expect \pra*{ \bigl\lvert  \xn{j}_s - \xnt{j}_s \bigr\rvert^2} \e^{- \widetilde{a} s}  \d s  + \widetilde{C}_2 J^{-q}\,,
\end{align}
The structure and ingredients of the proof are analogous to those for \cref{thm:uit-mfl},
except that we no longer make use of the Monte Carlo estimate~\eqref{eq:intro:mc_convergence},
as we do not pivot around $\empl{t}$.
Since \eqref{eq:intro:mc_convergence} becomes unnecessary for the proof,
we gain the better decay rate $J^{-q}$ with respect to~$J$,
compared to the rate from mean-field limit estimate in \cref{thm:uit-mfl} (see also \cref{rmk:stability_better_rate}).

Finally, to prove \cref{thm:microscopic-concentration-around-consensus-point},
we first show that the mean $\meanx{t}$ is almost surely a Cauchy sequence in time with an exponential rate
(\cref{claim:Cauchy-sequence}) by relying on the exponential decay of the centered moments $\empmomentp{p}{t}$ and the Borel--Cantelli lemma.
This leads to the conclusion that~$\meanx{t}$ almost surely converges to a limit $\cpt$ as stated in~\eqref{eq:as_conv_mean}.
A second key auxiliary result is the particle collapse for \eqref{eq:cbo} (\cref{claim:empirical-cov-conv-to-0-almost-surely}): We obtain a.s.\ exponential convergence of $\empmomentp{2}{t}$ to zero as a consequence of the concentration inequality \eqref{eq:intro:concentration-summary} (\cref{prop:bound-on-bad-set-general-noise}). All estimates \eqref{eq:as_conv_mean}, \eqref{eq:as_conv_wmean} and \eqref{eq:as_conv_particles} in Part~\ref{item:almost-sure-exponential-concentration} of \cref{thm:microscopic-concentration-around-consensus-point} then follow via pivoting around $\meanx{t}$. The mean square convergence estimates stated in Part~\ref{item:mean-square-exponential-concentration} of~\cref{thm:microscopic-concentration-around-consensus-point} follow by first deriving the SDE satisfied by the empirical mean, and then combining the first item with the exponential decay of centered moments in~\cref{lem:decay-centered-moments}, via an application of the dominated convergence theorem. The final item Part~\ref{item:consensus-point-close-to-target} in~\cref{thm:microscopic-concentration-around-consensus-point} is then obtained by using the first item together with the uniform-in-time propagation of chaos proved in~\cref{thm:uit-mfl}.

\section{Proof of the main results}
\label{sec:proofs_main}

\subsection{Proof of \texorpdfstring{\cref{thm:uit-mfl}}{Theorem~\ref{thm:uit-mfl}}}
\label{sec:proof:thm:uit-mfl}
\begin{proof}
    Let
    \[
        \mathcal E_t = \frac{1}{J} \sum_{j=1}^{J} \left\lvert \xn{j}_t - \xnl{j}_t \right\rvert^2.
    \]
    Observe that the requirement $\sigma \in [0,\widetilde{\sigma})$ implies that $0< \edecay{8} < 8\edecay{2}$, see \eqref{eq:noise-upper-bound} and \eqref{eq:intro:exp-decay-factor}.
    By It\^o's formula,
    it holds that
    \begin{align}
        \d \mathcal E_t
         & = - \frac{2}{J} \sum_{j=1}^{J} \left\langle \xn{j}_t - \xnl{j}_t, \xn{j}_t - \xnl{j}_t - \wmx{t} + \wmeanmfl{t}  \right\rangle \d t                                                             \\
         & \qquad + \frac{ \sigma^2}{J}  \sum_{j=1}^{J}
        \tr \pra*{ \bra*{S\bra*{\xn{j}_t - \wmx{t}} - S\bra*{\xnl{j}_t - \wmeanmfl{t}}}^2 } \d t
        \\
         & \qquad + \frac{2 \sigma}{J}  \sum_{j=1}^{J} \left\langle \xn{j}_t - \xnl{j}_t, S\bra*{ \xn{j}_t - \wmx{t} } \, \d \wn{j}_t - S\bra*{ \xnl{j}_t - \wmeanmfl{t} } \, \d \wn{j}_t \right\rangle\,.
    \end{align}
    In the case of isotropic noise $S(x)= \abs{x} I_d$, we have by the reverse triangle inequality
    \begin{align}
        \tr \pra*{ \bra*{S\bra*{\xn{j}_t - \wmx{t}} - S\bra*{\xnl{j}_t - \wmeanmfl{t}}}^2 }
        \le d \abs*{\xn{j}_t - \wmx{t} - \xnl{j}_t + \wmeanmfl{t} }^2,
    \end{align}
    while in the case of anisotropic noise $S(x) = \diag(x)$, we have
    \begin{align}
        \tr \pra*{ \bra*{S\bra*{\xn{j}_t - \wmx{t}} - S\bra*{\xnl{j}_t - \wmeanmfl{t}}}^2 } = \abs*{\xn{j}_t - \wmx{t} - \xnl{j}_t + \wmeanmfl{t} }^2.
    \end{align}
    Taking the expectation and rearranging,
    we obtain using the notation \eqref{eq:noise-prefactor} that
    \begin{align}
        \frac{\d}{\d t} \expect \mathcal E_t
         & \leq - \frac{2}{J} \expect \sum_{j=1}^{J} \left\langle \xn{j}_t - \xnl{j}_t, \xn{j}_t - \xnl{j}_t - \wmx{t} + \wmeanmfl{t}  \right\rangle \\
         & \qquad + \frac{\tau(S) \sigma^2}{J}   \expect \sum_{j=1}^{J}
        \abs*{\xn{j}_t - \wmx{t} - \xnl{j}_t + \wmeanmfl{t} }^2
        = \mathcal A_1(t) + \mathcal A_2(t) + \mathcal A_3 (t) \, ,
    \end{align}
    where~$\mathcal A_1(t), \mathcal A_2(t), \mathcal A_3(t)$ are defined as
    \begin{align}
        \mathcal A_1(t)
                        & := - \frac{2}{J} \expect \sum_{j=1}^{J} \left\langle \xn{j}_t - \xnl{j}_t, \xn{j}_t - \xnl{j}_t - \wmx{t} + \wmxl{t}  \right\rangle, \\
        \mathcal A_2(t) & := - \frac{2}{J} \expect \sum_{j=1}^{J} \left\langle \xn{j}_t - \xnl{j}_t, \wmeanmfl{t} - \wmxl{t}  \right\rangle,                   \\
        A_3(t)          & :=  \frac{\tau(S) \sigma^2}{J}   \expect \sum_{j=1}^{J} \abs*{\xn{j}_t - \wmx{t} - \xnl{j}_t + \wmeanmfl{t} }^2,
    \end{align}
    with $\empl{t}:= \frac{1}{J} \sum_{j=1}^J \delta_{\xnl{j}_t}$ being the empirical measure of the i.i.d.\ mean-field particles.
    \paragraph{Bounding $\mathcal A_1(t)$}
    We rewrite
    \begin{align}
        \mathcal A_1(t)
         & = -\frac{2}{J}\sum_{j=1}^{J} \expect \left\langle \xn{j}_t - \xnl{j}_t, \xn{j}_t - \xnl{j}_t \right\rangle
        + 2 \expect
        \left\langle \nmx{t} - \nml{t}, \wmx{t} - \wmxl{t}  \right\rangle                                                             \\
         & = - \frac{2}{J} \sum_{j=1}^J \expect \left[
            \left\lvert \xn{j}_t - \xnl{j}_t  \right\rvert^2 \right]
        + 2 \expect \left[
            \left\lvert \nmx{t} - \nml{t}  \right\rvert^2
        \right]                                                                                                                       \\
         & \qquad -  2 \expect \left[ \left\langle \nmx{t} - \nml{t}, \nmx{t} - \wmx{t} - \nml{t} + \wmxl{t}  \right\rangle
        \right]                                                                                                                       \\
         & = -\frac{2}{J}\sum_{j=1}^{J} \expect \left[ \left\lvert \xn{j}_t - \nmx{t} - \xnl{j}_t + \nml{t} \right\rvert^2
            \right]
        \\
         & \qquad -  2\expect \left[ \left\langle \nmx{t} - \nml{t}, \nmx{t} - \wmx{t} - \nml{t} + \wmxl{t}  \right\rangle \right]\,,
    \end{align}
    where we used Huygens' elementary identity,
    which holds for any collection $\{z_j\}_{j \in \range{1}{J}}$ in~$\real^d$:
    \begin{align}
        \label{eq:huygens}
         & \frac{1}{J}\sum_{j = 1}^J \left|z_j\right|^2 = \frac{1}{J}\sum_{j = 1}^J \left|z_j - m\right|^2 + \left|m \right|^2 , \qquad
        \text{ with }  \, m = \frac{1}{J}\sum_{j = 1}^J z_j\,.
    \end{align}
    Using the inequality~$\lvert \nmx{t} - \nml{t} \rvert^2 \leq \mathcal E_t$ and introducing
    \begin{align}
        \mathcal B(t) := \expect \Bigl\lvert  \nmx{t} - \wmx{t} - \nml{t} + \wmxl{t} \Bigr\rvert^2,
    \end{align}
    we bound the term $\mathcal A_1(t)$ as follows:
    \begin{align}
        \mathcal A_1(t)
         & \le -\frac{2}{J}\sum_{j=1}^{J} \expect \biggl[ \left\lvert \xn{j}_t - \nmx{t} - \xnl{j}_t + \nml{t} \right\rvert^2 \biggr]
        \label{eq:A1-final}
        + 2 \bra*{\expect \mathcal E_t}^{\frac{1}{2}} \mathcal B(t)^{\frac{1}{2}}.
    \end{align}
    \paragraph{Bounding $\mathcal A_2(t)$}
    By \eqref{eq:moments_summary} and \eqref{eq:intro:mc_convergence} (see \cref{lem:convergence_weighted_mean_iid-2,thm:mfl-decay-p-p-cetered-moment}), the random variable
    \[
        \mathfrak D^J_t = \Bigl\lvert \wmeanmfl{t} - \wmxl{t} \Bigr\rvert
    \]
    satisfies the following inequality:
    \begin{align}\label{eq:D_bound}
        \expect \Bigl[ \abs{\mathfrak D^J_t}^2 \Bigr] \le \frac{\cwm{2} } {J} \expect \abs*{\xnl{j}_t - \expect \xnl{j}_t}^2
        \le \frac{\cwm{2} } {J} \e^{-\edecay{2} t}
        \expect \abs*{\xnl{j}_0 - \expect \xnl{j}_0}^2.
    \end{align}
    Therefore, by the Cauchy--Schwarz inequality,
    {we obtain for any $\zeta \in (0, \edecay{2})$ to be determined later}
    \begin{align}
        \mathcal A_2(t)
         & = - 2 \expect   \left\langle \nmx{t} - \nmxl{t}, \wmeanmfl{t} - \wmxl{t}  \right\rangle \\
         &
        \leq 2 \sqrt{\expect \mathcal E_t} \, {\sqrt{\expect \Bigl[ \abs{\mathfrak D^J_t}^2 \Bigr]}}
        \le \e^{-{\zeta}t} \expect \mathcal E_t + \frac{\cwm{2}}{J} \e^{-{(\edecay{2} - \zeta)} t} \expect \abs*{\xnl{j}_0 - \expect \xnl{j}_0}^2.
    \end{align}
    \paragraph{Bounding $\mathcal A_3(t)$}
    From \eqref{eq:huygens} we obtain
    \begin{align}
        \mathcal A_3(t)
         & =  \frac{\tau(S)\sigma^2}{J} \sum_{j=1}^{J} \expect \biggl[ \Bigl\lvert \xn{j}_t - \wmx{t} - \xnl{j}_t + \wmeanmfl{t} \Bigr\rvert^2 \biggr] \\
         & =  \frac{\tau(S)\sigma^2}{J} \sum_{j=1}^{J} \expect \biggl[ \Bigl\lvert \xn{j}_t - \nmx{t} - \xnl{j}_t + \nml{t} \Bigr\rvert^2 \biggr]      \\
         & \qquad + \tau(S) \sigma^2 \expect \biggl[ \Bigl\lvert\nmx{t} - \wmx{t} - \nml{t} + \wmeanmfl{t} \Bigr\rvert^2 \biggr]\,.
    \end{align}
    Thus, using that $|a + b|^2 \leq 2|a|^2 + 2|b|^2$ we obtain
    \begin{align}
        \mathcal A_3(t)
        \le &
        \frac{\tau(S)\sigma^2}{J} \sum_{j=1}^{J} \expect\left[  \Bigl\lvert \xn{j}_t - \nmx{t} - \xnl{j}_t + \nmxl{t} \Bigr\rvert^2
            \right]
        + 2\tau(S)\sigma^2
        \mathcal B(t) +  2 \tau(S) \sigma^2 \expect \pra*{\abs*{\mathfrak D^J_t}^2}\,.
    \end{align}
    Observe that,
    since~$\tau(S)\sigma^2 \leq 2$ by assumption,
    the first summand can be compensated by the first term in~\eqref{eq:A1-final}.
    The last term can be controlled by~\eqref{eq:D_bound}.

    \paragraph{Conclusion}
    Summing up, {and noting that $\e^{- \edecay{2} t } \leq \e^{- (\edecay{2} - \zeta) t}$},
    we obtain
    \begin{align}
        \mathcal A_1(t) + \mathcal A_2(t) + \mathcal A_3(t)
         & \le
        2 \bra*{\expect \mathcal E_t}^{\frac{1}{2}} \, \mathcal B(t)^{\frac{1}{2}}
        + \e^{-{\zeta}t} \expect \mathcal E_t
        + \Bigl( 1+ {2} \tau(S) \sigma^2 \Bigr)\frac{\cwm{2}}{J} \e^{-{(\edecay{2} - \zeta)} t} \expect \abs*{\xnl{j}_0 - \expect \xnl{j}_0}^2
        + 2\tau(S)\sigma^2 \mathcal B(t) \\
         & \leq
        \Bigl(1 + 2 \tau(S) \sigma^2\Bigr) \mathcal B(t) \e^{\zeta t} + 2 \e^{-{\zeta}t} \expect \mathcal E_t
        + \Bigl( 1+ {2} \tau(S) \sigma^2 \Bigr) \frac{\cwm{2}}{J} \e^{-{(\edecay{2} - \zeta)} t} \momentp{2}{\mfldis_0}.
        \label{eq:conclusion_bound}
    \end{align}
    By~\eqref{eq:stab_summary} (see \cref{lem:stab_wmean}), we have that
    \begin{align}
        \mathcal B(t) \leq 2C_{\mathcal M}^2 \expect \pra*{
        \bra*{ {\empmomentp{2}{t}} + {\emplmomentp{2}{t}} }
        \wasserstein_2^2\bra*{\emp{t}, \empl{t}}}.
    \end{align}
    Further below, we will prove that
    there exists a finite constant $C_{\rm Q}$ such that
    \begin{align}
        \label{eq:technical-inequality}
        \expect \pra*{
        \bra*{ {\empmomentp{2}{t}} + {\emplmomentp{2}{t}} }
        \wasserstein_2^2\bra*{\emp{t}, \empl{t}}} \le
        C_{\rm Q} \left( J^{-1} \e^{- \frac{\edecay{8}}{4} t} + \e^{- \kappa t} \expect \mathcal{E}_t \right) \,,
    \end{align}
    where $\kappa = \frac{\edecay{8}}{8}$ satisfies $\kappa\le \edecay{2}$.
    Thus,
    substituting this bound into~\eqref{eq:conclusion_bound} and letting $\zeta = \frac{\kappa}{2} =  \frac{\edecay{8}}{16} $,
    we obtain
    \begin{align}
        \frac{\d}{\d t}\expect \mathcal{E}_t
         & \leq 2C_{\mathcal M}^2  C_{\rm Q} \Bigl(1 + 2 \tau(S) \sigma^2\Bigr) \Bigl( J^{-1}  + \expect \calE_t \Bigr) \e^{- \frac{\kappa}{2}}                                         \\
         & \qquad + 2  \expect \mathcal E_t \e^{-{\frac{\kappa}{2}}t} + \Bigl( 1+ {2} \tau(S) \sigma^2 \Bigr)\frac{\cwm{2}}{J} \e^{-{(\edecay{2} - \kappa/2)} t} \momentp{2}{\mfldis_0} \\
         & \leq \kappa\left( c_1 \expect \mathcal E_t + \frac{c_2}{J} \right) \e^{-{\frac{\kappa}{2}}t}  \,,
    \end{align}
    where
    \begin{align}
        \label{eq:constants-c1}
         & c_1:=
        \cone \,          \\
        \label{eq:constants-c2}
         & c_2:=\ctwo \,.
    \end{align}
    Thus, rewriting the inequality in integral form, we have
    \[
        \expect \mathcal E_t
        \leq \expect \calE_0 + \kappa c_1 \int_{0}^{t} \expect \mathcal E_s \e^{- \frac{\kappa}{2} s} \, \d s + \frac{2 c_2}{ J}.
    \]
    Finally, using the integral version of Grönwall's inequality,
    we conclude that
    \begin{align}
        \expect \calE_t \leq \left( \expect \calE_0 + \frac{2 c_2}{ J} \right) \e^{ 2 c_1 } \,.
    \end{align}
    \paragraph{Proof of \eqref{eq:technical-inequality}}
    To motivate \eqref{eq:technical-inequality},
    consider first the setting where $\sigma = 0$ and $\mfldis_0$ is compactly supported.
    In this setting, the terms $\empmomentp{2}{t}$ and $\emplmomentp{2}{t}$ are almost surely bounded from above by a decreasing exponential,
    in view of~\eqref{eq:moments_summary} (see~\cref{lem:decay-centered-moments}).
    Therefore, applying Hölder's inequality for the exponents $(\infty, 1)$
    and using the definition of the Wasserstein distance,
    we obtain for some appropriate constant~$C$ that
    \[
        \expect \pra*{
        \bra*{ {\empmomentp{2}{t}} + {\emplmomentp{2}{t}} }
        \wasserstein_2^2\bra*{\emp{t}, \empl{t}}}
        \le C \e^{- \edecay{2} t}
        \expect \left[ \wasserstein_2^2\bra*{\emp{t}, \empl{t}} \right]
        \leq C \e^{- \edecay{2} t}  \mathcal E_t.
    \]
    In the presence of noise,
    H\"older's inequality cannot be applied in this manner,
    because the exponential decay of the quadratic centered moments on the left-hand side does not hold almost surely.
    We circumvent this difficulty by using the concentration inequalities~\eqref{eq:intro:concentration-summary} (see~\cref{sub:concent} for the proofs),
    which show that these moments decay exponentially with high probability.
    Specifically,
    fix any $q \geq 2$,
    $\kappa :=\frac{\edecay{8}}{8}<\edecay{2}$
    and
    introduce
    \begin{align}
        \Omega_{\kappa}
         & =
        \Bigl\{
        \omega \in \Omega :
        \sup_{t \geq 0}
        \e^{\kappa t}  \empmomentp{2}{t}
        \geq
        \momentp{2}{\mfldis_0}  + 1
        \Bigr\}  \,, \\
        \overline \Omega_{\kappa}
         & =
        \Bigl\{
        \omega \in \Omega :
        \sup_{t \geq 0}
        \e^{\kappa t}  \emplmomentp{2}{t}
        \geq
        \momentp{2}{\mfldis_0} +1
        \Bigr\}\,,
    \end{align}
    where $B:=2\sigma^2 \tau(S) \e^{\alpha\bra*{\overline f - \underline f}} \expect \abs*{ \widebar{X}_0 - \nm(\mfldis_0)}^2$,
    and define $\Omega^{\star}_{\kappa} := \Omega_{\kappa} \cup \overline \Omega_{\kappa}$.
    By definition of this subset of the sample space, it holds almost surely that
    \begin{align}
        \label{eq:complement_bound}
        \forall  t \geq 0,
        \qquad
        \mathbbm 1_{\Omega \setminus \Omega^{\star}_{\kappa}}
        \left( {\empmomentp{2}{t}} + {\emplmomentp{2}{t}} \right)
        \leq 2 \e^{- \kappa t } \Bigl(\momentp{2}{\mfldis_0} + 1\Bigr).
    \end{align}
    Furthermore, using~\eqref{eq:intro:concentration-summary} (see~\cref{prop:bound-on-bad-set-general-noise,prop:bound-on-bad-set-synchronous-mf,rmk:bound-on-bad-set-general-noise-coarse}),
    we have that
    \begin{equation}
        \label{eq:probab_bad_set}
        \proba \bigl[ \Omega^{\star}_{\kappa} \bigr]
        \leq 2 \cbad{q}{\kappa}               J^{- \frac{q}{2}} \momentp{2q}{\mfldis_0}.
    \end{equation}
    We then decompose the expectation as follows
    \begin{align}
        Q_t
         & \coloneq
        \expect \pra*{
        \bra*{ {\empmomentp{2}{t}} + {\emplmomentp{2}{t}} }
        \wasserstein_2^2\bra*{\emp{t}, \empl{t}}}                                                                                                                                              \\
         & = \expect \left[ \mathbbm 1_{\Omega^{\star}_{\kappa}} \left( {\empmomentp{2}{t}} + {\emplmomentp{2}{t}} \right)  \wasserstein^2_2\bigl(\emp{t}, \empl{t}\bigr) \right] +
        \expect \left[ \mathbbm 1_{\Omega \setminus \Omega_{\kappa}^{\star}} \left( {\empmomentp{2}{t}} + {\emplmomentp{2}{t}} \right)  \wasserstein^2_2\bigl(\emp{t}, \empl{t}\bigr)  \right] \\
         & \leq
        \proba \bigl[ \Omega_{ \kappa}^{\star} \bigr]^{\frac{1}{2}}
        \expect \left[ \left( {\empmomentp{2}{t}} + {\emplmomentp{2}{t}} \right)^{2 }  \wasserstein^4_2\bigl(\emp{t}, \empl{t}\bigr) \right]^{\frac{1}{2}}
        +  2 \e^{- \kappa t }\Bigl(\momentp{2}{\mfldis_0} + 1\Bigr) \expect \left[ \wasserstein^2_2\bigl(\emp{t}, \empl{t}\bigr)
            \right],
    \end{align}
    where the last inequality follows from~\eqref{eq:complement_bound}.
    Now, using the elementary inequality $(a+b)^p \leq 2^{p-1} (a^p + b^p)$
    together with H\"older's inequality,
    we deduce that
    \begin{align}
         & \expect \left[ \left( {\empmomentp{2}{t}} + {\emplmomentp{2}{t}} \right)^{2}  \wasserstein_2\bigl(\emp{t}, \empl{t}\bigr)^{4} \right] \\
         & \qquad \leq 2^{4}
        \expect \left[ \left(\empmomentp{4}{t} + \emplmomentp{4}{t}\right) \left( \wasserstein_2\bigl(\emp{t}, \delta_0 \bigr)^4 + \wasserstein_2\bigl(\delta_0, \empl{t} \bigr)^{4} \right)
        \right] \\
         & \qquad \leq 2^{4}
        \expect \left[ \left(\empmomentp{4}{t} + \emplmomentp{4}{t}\right)
        \left(\rawempmomentp{4}{t} + \rawemplmomentp{4}{t}\right) \right]                                                                        \\
         & \qquad \leq 2^{5} \sqrt{ \expect \left[ \empmomentp{8}{t} + \emplmomentp{8}{t} \right]
            \expect \left[\rawempmomentp{8}{t} + \rawemplmomentp{8}{t}\right] }.
    \end{align}
    From \eqref{eq:moments_summary}, and \eqref{eq:raw_moments_summary} (see~\cref{lem:decay-centered-moments,lem:uit-raw-moments-2norm}),
    it holds that
    \begin{align}
        \forall t \geq 0, \qquad
        \left\{
        \begin{aligned}
            \expect \Bigl[ \empmomentp{8}{t} \Bigr]
             & \leq \expect \Bigl[ \empmomentp{8}{0} \Bigr] \e^{- \edecay{8} t} \\
            \expect \Bigl[ \rawempmomentp{8}{t} \Bigr]
             & \leq \craw{8}^8
            \expect \Bigl[ \rawempmomentp{8}{0} \Bigr],
        \end{aligned}
        \right.
    \end{align}
    and similarly for the mean-field particle system (see~\cref{thm:mfl-decay-p-p-cetered-moment,lem:uit-raw-moments-synchronous-system}).
    Therefore we have
    \begin{align}
        \expect \left[ \left( {\empmomentp{2}{t}} + {\emplmomentp{2}{t}} \right)^{2}  \wasserstein_2\bigl(\emp{t}, \empl{t}\bigr)^{4} \right] & \leq
        2^{6} \craw{8}^{4} \e^{- \frac{\edecay{8}}{2} t}
        \sqrt{
            \expect \left[\empmomentp{8}{0} \right]
            \expect \left[\rawempmomentp{8}{0} \right]
}                                                      \\
      & \leq 2^{10} \craw{8}^{4} \e^{- \frac{\edecay{8}}{2} t}
        \rawmomentp{8}{\mfldis_0}\,,
    \end{align}
    where the last inequality follows by
    \begin{align}\label{eq:raw-centered-moment-bound}
        \momentp{p}{\mu}
        =  \int \bigl\lvert x - \nm(\mu) \bigr\rvert^p  \, \mu(\d x)
        \leq
        2^{p-1} \int \bigl\lvert x \bigr\rvert^p  \, \mu(\d x)
        +  2^{p-1} \bigl\lvert \nm(\mu) \bigr\rvert^p
        \leq 2^p \rawmomentp{p}{\mu} \,.
    \end{align}
    By~\eqref{eq:probab_bad_set} together with the inequality $\wasserstein_2\bigl(\emp{t}, \empl{t}\bigr)^{2} \leq \mathcal E_t$,
    this leads to
    \begin{align}
        Q_t
         & \leq
        2^6
        \cbad{q}{\kappa}^{\frac{1}{2}}
        \craw{8}^{2}
        \sqrt{\momentp{2q}{\mfldis_0}} \sqrt{\rawmomentp{8}{\mfldis_0}} J^{-\frac{q}{4}} \e^{ - \frac{\edecay{8}}{4} t}
        + 2 \e^{- \kappa t } \Bigl( \momentp{2}{\mfldis_0} + 1 \Bigr)  \E \calE_t.
    \end{align}
    In particular,
    taking~$q = 4$,
    then using the inequality~$\momentp{2q}{\mfldis_0} \leq 2^8\rawmomentp{8}{\mfldis_0}$,
    we obtain the claimed inequality~\eqref{eq:technical-inequality} with constant
    \begin{align}
        C_{\rm Q}
         & = 2^{10} \cbad{4}{\kappa}^{\frac{1}{2}}
        \craw{8}^{2}
        \rawmomentp{8}{\mfldis_0} + 2 \Bigl( \momentp{2}{\mfldis_0} + 1 \Bigr) \\
         & \leq
        2^{11}
        \cbad{4}{\kappa}^{\frac{1}{2}}
        \craw{8}^{2}
        \Bigl( \rawmomentp{8}{\mfldis_0} + 1 \Bigr),
        \label{eq:C_q}
    \end{align}
    which concludes the proof.
\end{proof}

\subsection{Proof of \texorpdfstring{\cref{thm:stab_particle}}{Theorem~\ref{thm:stab_particle}}}
\label{sub:proof-stab-particle}
\begin{proof}
    Define
    \begin{align}
        \mathcal G_t :=  \frac{1}{J} \sum_{j=1}^{J} \abs*{ \xn{j}_t - \xnt{j}_t }^2\,.
    \end{align}
    By It\^o's formula,
    it holds that
    \begin{align}
        \d \mathcal G_t
         & = - \frac{2}{J} \sum_{j=1}^{J} \left\langle \xn{j}_t - \xnt{j}_t, \xn{j}_t - \xnt{j}_t - \wmx{t} + \wmt{t}  \right\rangle \d t
        \\
         & \qquad + \frac{\sigma^2}{J}  \sum_{j=1}^{J}
        \tr \bra*{ \abs*{ S\bra*{\xn{j}_t - \wmx{t}} - S\bra*{\xnt{j}_t - \wmt{t}} }^2 } \d t
        \\
         & \qquad + \frac{2 \sigma}{J}  \sum_{j=1}^{J} \left\langle \xn{j}_t - \xnt{j}_t, S\bra*{ \xn{j}_t - \wmx{t} } \, \d \wn{j}_t - S\bra*{ \xnt{j}_t - \wmt{t} } \, \d \wn{j}_t \right\rangle.
    \end{align}
    Similarly to the proof of~\cref{thm:uit-mfl}, we obtain by taking expectations and rearranging that
    \begin{align}
        \frac{\d}{\d t} \expect \mathcal G_t
         & \le - \frac{2}{J} \expect \sum_{j=1}^{J} \left\langle \xn{j}_t - \xnt{j}_t, \xn{j}_t - \xnt{j}_t - \wmx{t} + \wmt{t}  \right\rangle \\
         & \qquad + \frac{\tau(S)\sigma^2}{J}   \expect \sum_{j=1}^{J} \abs*{\xn{j}_t - \xnt{j}_t - \wmx{t}  + \wmt{t} }^2
        =:  \mathcal A_1(t) +  \mathcal A_2(t).
    \end{align}
    To bound the terms, we define \[
        \mathcal B(t) := \expect \Bigl\lvert \nmx{t} - \wmx{t} - \nmt{t} + \wmt{t} \Bigr\rvert^2\,,
    \]
    where we can use \eqref{eq:stab_summary} (see~\cref{lem:stab_wmean}) to get \begin{align}
        \mathcal B(t) & \leq 2 C_{\mathcal M}^2 \E\left[ \left( \momentp{2}{\emp{t}} + \momentp{2}{\empt{t} }  \right)
            \wasserstein_2^2( \emp{t} , \empt{t} ) \right] .
    \end{align}

    \paragraph{Bounding $\mathcal A_1(t)$}
    Note that by \eqref{eq:huygens}, we have
    \begin{align}
        \mathcal A_1(t)
         & =- \frac{2}{J}\sum_{j=1}^{J}\expect \left\langle \xn{j}_t - \xnt{j}_t, \xn{j}_t - \xnt{j}_t \right\rangle
        + 2\expect \left\langle \nmx{t} - \nmt{t}, \wmx{t} - \wmt{t}  \right\rangle                                   \\
         & = -\frac{2}{J}\sum_{j=1}^{J} \expect\left\lvert \xn{j}_t - \nmx{t} - \xnt{j}_t + \nmt{t} \right\rvert^2    \\
         & \qquad -  2\expect\left\langle \nmx{t} - \nmt{t}, \nmx{t} - \wmx{t} - \nmt{t} + \wmt{t}  \right\rangle
        \\
         & \le  -\frac{2}{J}\sum_{j=1}^{J}\expect \left\lvert \xn{j}_t - \nmx{t} - \xnt{j}_t + \nmt{t} \right\rvert^2
        + 2 \bra*{\expect \mathcal G_t}^{1/2} \bra*{\mathcal B(t)}^{1/2}                                              \\
         & \leq -\frac{2}{J}\sum_{j=1}^{J}\expect \left\lvert \xn{j}_t - \nmx{t} - \xnt{j}_t + \nmt{t} \right\rvert^2
        +\e^{- \zeta t} \expect \mathcal G_t +
        \e^{ \zeta t} \mathcal B(t) \,.
    \end{align}

    \paragraph{Bounding $\mathcal A_2(t)$}
    On the other hand, using again \eqref{eq:huygens}, we have
    \begin{align}
        \mathcal A_2(t)
         & = \tau(S)\sigma^2{J} \sum_{j=1}^{J} \expect  \Bigl\lvert \xn{j}_t - \wmx{t} - \xnt{j}_t + \wmt{t} \Bigr\rvert^2                         \\
         & = \frac{\tau(S) \sigma^2}{J}\sum_{j=1}^{J} \expect \Bigl\lvert \xn{j}_t - \nmx{t} - \xnt{j}_t + \nmt{t} \Bigr\rvert^2 + \tau(S)\sigma^2
        \expect\left\lvert \nmx{t} - \wmx{t} - \nmt{t} + \wmt{t} \right\rvert^2
        \\
         & =
        \frac{\tau(S)\sigma^2}{J}
        \sum_{j=1}^{J} \expect \Bigl\lvert \xn{j}_t - \nmx{t} - \xnt{j}_t + \nmt{t} \Bigr\rvert^2
        +
        \tau(S)\sigma^2
        \mathcal B(t) \,.
    \end{align}
    Thus, we deduce that for a positive $\zeta \in (0,  \frac{\edecay{8}}{4}) \,,$
    \begin{align}
        \mathcal A_1(t) + \mathcal A_2(t)
         & \leq - \left( \frac{  2 - \tau(S)\sigma^2 }{J}\right)
        \sum_{j=1}^{J}
        \expect
        \left\lvert \xn{j}_t - \nmx{t} - \xnt{j}_t + \nmt{t} \right\rvert^2 + \e^{- \zeta t} \expect \mathcal G_t
        + \left(\e^{ \zeta t} + \tau(S)\sigma^2
        \right) \mathcal B(t) \,.
    \end{align}
    Using a similar argument as for proving \eqref{eq:technical-inequality} in the proof of \cref{thm:uit-mfl},
    for any $\tilde{q} \geq 2$ such that $\expect \pra*{\empmomentp{2\tilde{q}}{0}} < \infty $, we have that
    \begin{align}\label{eq:technical-inequality_for_stab}
        \E\left[ \left( \momentp{2}{\emp{t}} + \momentp{2}{\empt{t} }  \right)
            \wasserstein_2^2( \emp{t} , \empt{t} ) \right]
         & \leq \widetilde{C}_{\rm Q} \left( J^{-\frac{\tilde{q}}{4}}\e^{ - \frac{\edecay{8}}{4} t } + \e^{- \frac{\edecay{8}}{8} t} \expect \mathcal{G}_t \right).
    \end{align}
    where $\kappa=\frac{\edecay{8}}{8}< \edecay{2}$
    due to $\sigma < \tilde{\sigma}$ from \eqref{eq:noise-upper-bound}.

    \paragraph{Conclusion}
    Defining $q:= \frac{\tilde{q}}{4} \geq \frac{1}{2}$, and $\zeta = \frac{\edecay{8}}{16}$,  we obtain
    \begin{align}
        \frac{\d}{\d t}\expect \mathcal{G}_t
         & \leq\e^{- \zeta t} \expect \mathcal G_t
        + 2 C_{\mathcal M}^2 \widetilde{C}_{\rm Q} \bra*{\e^{\zeta t} + \tau(S)\sigma^2}
        \bra*{ J^{-q}\e^{ - \frac{\edecay{8}}{4} t  } +\e^{- \frac{\edecay{8}}{8}  t} \expect \mathcal{G}_t }                                                             \\
         & \leq \tilde{c}_1 \exp \left( - \frac{\edecay{8}}{16 } t  \right) \expect \mathcal G_t + \tilde{c}_2  \exp \left( - \frac{\edecay{8}}{16 } t  \right) J^{-q}\,,
    \end{align}
    where
    \begin{align}
         & \tilde{c}_1:= \ctildeone \label{eq:constants-cstab1}\,,  \\
         & \tilde{c}_2:= \ctildetwo \,. \label{eq:constants-cstab2}
    \end{align}
    Integrating this bound, we obtain
    \begin{align}
        \expect  \mathcal G_t \leq \expect \mathcal{G}_0  + \frac{16 \tilde{c}_2}{ \edecay{8} J^q  }
        + \int_0^t \tilde{c}_1\e^{- \frac{\edecay{8}}{16} s} \expect \mathcal G_s ds\,,
    \end{align}
    and we conclude by applying Gr\"onwall's inequality that
    \begin{align}
        \expect \mathcal G_t \leq \left( \expect \mathcal{G}_0  + \frac{16\tilde{c}_2}{\edecay{8}J^q} \right)\e^{\frac{16\tilde{c}_1}{\edecay{8}} } \,.
    \end{align}

    \paragraph{Proof of \eqref{eq:technical-inequality_for_stab}} For $\kappa > 0$,
    we define $\Omega_{\kappa}^*:= \Omega_{\kappa} \cup \widetilde{\Omega}_{\kappa}\,,$ where
    \begin{align}
         & \Omega_{\kappa} :=  \Bigl\{
        \omega \in \Omega :
        \sup_{t \geq 0}
        \e^{\kappa t}  \empmomentp{2}{t}
        \geq
        \expect \pra*{ \empmomentp{2}{0}} + 1
        \Bigr\} \,,                             \\
         & \widetilde{\Omega}_{\kappa}:=\Bigl\{
        \omega \in \Omega :
        \sup_{t \geq 0}
        \e^{\kappa t}  \mathfrak{M}_2(\mu_{\widetilde{X}_t^J})
        \geq
        \expect \pra*{ \emptmomentp{2}{0}} + 1
        \Bigr\}  \,.
    \end{align}
    Then we have
    \begin{align}
        \label{eq:stab:complement_bound}
        \forall  t \geq 0,
        \qquad
        \mathbbm 1_{\Omega \setminus \Omega^{\star}_{\kappa}}
        \left( {\empmomentp{2}{t}} + {\emptmomentp{2}{t}} \right)
        \leq \e^{- \kappa t } \Bigl( \expect \pra*{ \empmomentp{2}{0} + \emptmomentp{2}{0}} +
        2\Bigr)
    \end{align}
    and from \eqref{eq:intro:concentration-summary} (see \cref{prop:bound-on-bad-set-general-noise,rmk:bound-on-bad-set-general-noise-coarse}) for $\tilde{q} \geq 2$ and $\kappa < \min\left\{\frac{\edecay{2\tilde{q}}}{\tilde{q}}, \edecay{2}\right\}\,,$
    \begin{equation}
        \proba \bigl[ \Omega^{\star}_{\kappa} \bigr]
        \leq \proba\bigl[ \Omega_{\kappa} \bigr] + \proba\bigl[ \widetilde{\Omega}_{\kappa} \bigr] \leq  \cbad{\tilde{q}}{\kappa}
        J^{- \frac{\tilde{q}}{2}} \expect \pra*{ \empmomentp{2\tilde{q}}{0} + \emptmomentp{2\tilde{q}}{0}} \,.
    \end{equation}
    Splitting the expectation on the left-hand side of \eqref{eq:technical-inequality_for_stab} into two parts, we have
    \begin{align}
         & \E\left[ \left( \momentp{2}{\emp{t}} + \momentp{2}{\empt{t} }  \right) \wasserstein_2^2( \emp{t} , \empt{t} ) \right] \\
         & \qquad \qquad \leq
        \expect \left[ \mathbbm 1_{\Omega^{\star}_{\kappa}} \left( {\empmomentp{2}{t}} + {\emptmomentp{2}{t}} \right)  \wasserstein^2_2\bigl(\emp{t}, \empt{t}\bigr)  \right] +
        \expect \left[ \mathbbm 1_{\Omega \setminus \Omega_{\kappa}^{\star}} \left( {\empmomentp{2}{t}} + {\emptmomentp{2}{t}} \right)  \wasserstein^2_2\bigl(\emp{t}, \empt{t}\bigr)  \right]
        \\  & \qquad \qquad \leq
        \proba \bigl[ \Omega_{ \kappa}^{\star} \bigr]^{\frac{1}{2}}
        \expect \left[ \left( {\empmomentp{2}{t}} + {\emptmomentp{2}{t}} \right)^{2 }  \wasserstein^4_2\bigl(\emp{t}, \empt{t}\bigr) \right]^{\frac{1}{2}}
        +   \e^{- \kappa t } \Bigl( \expect \pra*{ \empmomentp{2}{0} + \emptmomentp{2}{0}} +
        2\Bigr) \expect \mathcal G_t.
    \end{align}
    From \eqref{eq:moments_summary} and \eqref{eq:raw_moments_summary} (see \cref{lem:decay-centered-moments,lem:uit-raw-moments-2norm}), we have that
    \begin{align}
         & \expect \left[ \left( {\empmomentp{2}{t}} + {\emptmomentp{2}{t}} \right)^{2 }  \wasserstein^4_2\bigl(\emp{t}, \empt{t}\bigr) \right]
        \\ & \qquad \qquad \leq 2^{5}
        \sqrt{ \expect \left[ \empmomentp{8}{t} + \emptmomentp{8}{t} \right]
            \expect \left[\rawempmomentp{8}{t} + \rawemptmomentp{8}{t}\right] }
        \\ & \qquad \qquad \leq  2^9 \craw{8}^{4}
        \bra*{
            \expect \left[\rawempmomentp{8}{0} \right] +
            \expect \left[\rawemptmomentp{8}{0} \right]
        } \e^{- \frac{\edecay{8}}{2} t}\,,
    \end{align}
    where the last inequality follows by \eqref{eq:raw-centered-moment-bound} once again.
    Therefore, we obtain \eqref{eq:technical-inequality_for_stab} with
    \begin{align}
        \widetilde{C}_{\rm Q} & := \sqrt{2}^9 \cbad{\tilde q}{\kappa}^{\frac{1}{2}} \craw{8}^{4}
        \sqrt{\expect \pra*{ \empmomentp{2\tilde q}{0} + \emptmomentp{2q}{0}} }
        \sqrt{\expect \left[\rawempmomentp{8}{0} \right] +
            \expect \left[\rawemptmomentp{8}{0} \right] }
        \\
                              & \qquad+ \expect \pra*{ \empmomentp{2}{0} + \emptmomentp{2}{0}} + 2. \qedhere
    \end{align}
\end{proof}

\begin{remark}\label{rmk:stability_better_rate}
    Note that we obtain a better rate $J^{-q}$ in \cref{thm:stab_particle} than what we had in \cref{thm:uit-mfl} (rate $J^{-1}$).
    This is because for the stability estimate we do not need to estimate the error on the weighted mean stemming from the difference between the empirical measure composed of i.i.d.\  sampled mean-field particles and the mean-field solution $\bar{\rho}$.
    This error yields the usual Monte Carlo rate of $J^{-1}$ as per \cref{lem:convergence_weighted_mean_iid-2}.
\end{remark}

\subsection{Proof of \texorpdfstring{\cref{thm:microscopic-concentration-around-consensus-point}}{Theorem~\ref{thm:microscopic-concentration-around-consensus-point}}}
\label{sec:proof:microscopic-concentration-around-consensus-point}

We first prove that almost surely, $\meanx{t}$ is a Cauchy sequence in time.
\begin{lemma}
    \label{claim:Cauchy-sequence}
    Let~\cref{assumption:bounded} hold. Consider the CBO dynamics \eqref{eq:cbo} with initial condition $(\xn{j}_0)_{j=1}^J\sim \rho^J_0 $, where $\rho^J_0 \in \mathcal P_{\rm sym} (\real^{dJ})$ is such that
    \(
        \expect \empmomentp{2}{0} < +\infty\,.
    \)
    Assume that $\sigma>0$ is sufficiently small to ensure $\edecay{2}>0$,
    and let $\gamma\in \bigl(0, \frac{\edecay{2}}{2}\bigr)$.
    Then almost surely, there exists some time $T$ such that for all times $s,t \ge T$ it holds that
    \begin{align}
        \label{eq:cauchy_sequence}
        \abs*{\meanx{t}-\meanx{s}} \le \e^{-\gamma \min\{s,t\}}.
    \end{align}
\end{lemma}
\begin{proof}
    By taking the average over $j=1,\dots,J$, it follows from~\eqref{eq:cbo} that the empirical mean satisfies the SDE
    \begin{align}
        \label{eq:proof-cauchy:meanx-sde}
        \d \meanx{t}
         & =        - \Bigl(\meanx{t} - \wmx{t}\Bigr) \, \d t
        + \frac{\sigma}{J} \sum_{k=1}^{J} S \Bigl( \xn{k}_t - \wmx{t}\Bigr) \, \d \wn{k}_t.
    \end{align}
    Fix $A > 0$ and define for all $n\in\N$ the set of realizations 
    \begin{align}
        E_n := \set*{ \omega \in \Omega \st \sup_{t\ge n} \abs[\big]{\meanx{t} - \meanx{n}} \ge A \e^{-\gamma n}},
    \end{align}
    where $\Omega$ is the underlying probability space. Fix $n\in\N$. By Markov's inequality,
    it holds that
    \begin{align}
        \label{eq:proba-En-bound}
        \proba\pra*{ E_n } \le \expect\pra*{\sup_{t \ge n} \abs[\Big]{ \meanx{t}-\meanx{n}}} \,
        \frac{\e^{\gamma n}}{A} .
    \end{align}
    For all real numbers $t\ge n$, it follows from~\eqref{eq:proof-cauchy:meanx-sde} that
    \begin{align}
        \label{eq:meanx-t-n-diff-bound}
        \abs*{ \meanx{t} - \meanx{n}} \le \int_n^t \abs*{\meanx{s}-\wmeanx{s}} \, \d s + \abs*{M_t},
    \end{align}
    where $M_t :=\frac{1}{J} \sum_{j=1}^J \int_n^t \scp*{\sigma_j(s),\d \wn{j}_s}$ and
    \begin{align}
        \sigma_j(s) := \begin{cases}
                           \sigma\abs*{\xn{j}_s - \wmeanx{s}}
                           \begin{pmatrix}
                1 & 1 & \dots & 1
            \end{pmatrix}^T                   &
                           \text{ if } S = S^{(i)},             \\
                           \sigma\bra*{\xn{j}_s - \wmeanx{s}} &
                           \text{ if } S = S^{(a)}.
                       \end{cases}
    \end{align}
    Taking the supremum over $t\in[n,\infty)$ in \eqref{eq:meanx-t-n-diff-bound}, we obtain
    \begin{align}
        \label{eq:meanx-sup-bound}
        \sup_{t \ge n} \abs*{ \meanx{t} - \meanx{n}}
        \le \int_n^\infty \abs*{\meanx{s}-\wmeanx{s}} \, \d s + \sup_{t \ge n} \abs*{M_t} =:
        \mathcal T_1 + \mathcal T_2.
    \end{align}
    \paragraph{Bounding $\mathcal T_1$}
    By Fubini--Tonelli, \cref{lem:m-malpha} and from \eqref{eq:moments_summary} (see \cref{lem:decay-centered-moments}), we have
    \begin{align}
        \expect\pra*{\mathcal T_1} & =
        \int_n^\infty \expect \abs[\Big]{\meanx{s}-\wmeanx{s}} \, \d s
        \le \int_n^\infty \e^{\alpha(\overline{f}-\underline{f})} \bra[\Big]{\expect \empmomentp{2}{s}}^{\frac{1}{2}} \, \d s                                                          \\
                                   & \le \e^{\alpha(\overline{f}- \underline{f})} \left(\int_n^\infty \e^{-\frac{\edecay{2}s}{2}} \, \d s\right) \,  \expect \pra*{\empmomentp{2}{0}}^{\frac{1}{2}}
        = \frac{2 \e^{\alpha(\overline{f}-\underline{f})}}{\edecay{2}}  \expect\pra*{\empmomentp{2}{0}}^{\frac{1}{2}} \e^{-\frac{\edecay{2}n}{2}}.
    \end{align} 
    \paragraph{Bounding $\mathcal T_2$}
    For each $T\ge n$ we obtain from \cref{lem:concentration-ineq} and $\abs*{\sigma_j(s)}^2 \le \tau(S) \abs{\xn{j}_s - \wmeanx{s}}^2$, that
    \begin{align}
        \expect \pra*{ \sup_{T\ge t \ge n} M_t^2} & \le \frac{\cbdg{2}}{J^2}\sum_{j=1}^J \int_n^T \expect{\abs[\big]{\sigma_j(s)}^2} \d s
        \le \frac{\cbdg{2}\tau(S)}{J^2} \sum_{j=1}^J \int_n^T \expect{\abs[\big]{\xn{j}_s - \wmeanx{s}}^2} \, \d s.
    \end{align}
    \cref{lem:m-malpha} and \eqref{eq:moments_summary} imply that
    \begin{align}
        \frac{1}{J}\sum_{j=1}^J \expect{\abs[\Big]{\xn{j}_s - \wmeanx{s}}^2} &
        \le \frac{1}{J}\sum_{j=1}^J 2 \expect \abs[\Big]{ \xn{j}_s - \meanx{s}
        }^2 + 2 \expect \abs[\Big]{\meanx{s} - \wmeanx{s}}^2
        \\&
           \le 2\bra*{1 + \e^{\alpha(\overline{f}-\underline{f})}} \expect \empmomentp{2}{s}
        \le 2\bra*{1 + \e^{\alpha(\overline{f}-\underline{f})}} \expect \empmomentp{2}{0} \e^{-\edecay{2}s}.
    \end{align}
    This gives
    \begin{align}
        \expect \pra*{ \sup_{T\ge t\ge n} M_t^2} & \le
        \frac{2\cbdg{2}}{\edecay{2}}  \tau(S)\bra*{1 + \e^{\alpha(\overline{f}-\underline{f})}} \frac{\expect \empmomentp{2}{0}}{J}\e^{-\edecay{2}n}.
    \end{align}
    By the monotone convergence theorem, we obtain
    \begin{align}
        \expect \pra*{\mathcal T_2} \le \expect \pra*{\mathcal T_2^2}^{\frac{1}{2}}
        \le C_{\mathcal T_2} \frac{\expect \empmomentp{2}{0}^{\frac{1}{2}}}{\sqrt{J}} \e^{-\frac{\edecay{2}n}{2}},
        \quad \text{where }
        C_{\mathcal T_2} := \bra*{\frac{2\cbdg{2}}{\edecay{2}}  \tau(S)\bra*{1+ \e^{\alpha(\overline{f}-\underline{f})}}}^{\frac{1}{2}}.
    \end{align}
    \paragraph{Conclusion}
    Altogether, we have from \eqref{eq:proba-En-bound} and \eqref{eq:meanx-sup-bound} that
    \begin{align}
        \proba\pra*{ E_n } \le \expect\pra*{\sup_{t \ge n} \abs*{ \meanx{t}-\meanx{n}}} \frac{\e^{\gamma n}}{A} \le
        \bra*{\frac{2\e^{\alpha(\overline{f}-\underline{f})}}{\edecay{2}} + \frac{C_{\mathcal T_2}}{\sqrt{J}}} \expect\pra*{\empmomentp{2}{0}}^{\frac{1}{2}} \frac{\e^{-\bigl(\frac{\edecay{2}}{2}-\gamma\bigr) n}}{A}.
    \end{align}
    Since $\gamma < \frac{\edecay{2}}{2}$, it is clear that
    \(
        \sum_{n=1}^{+\infty} \proba\pra*{ E_n } < + \infty.
    \)
    Thus, by the Borel--Cantelli lemma,
    it holds almost surely that
    \begin{align}
        \exists N \in \N \, \text{ s.t. } \, \forall n
        \in \N \text{ with } \, n\ge N : \quad \sup_{t\geq n} \abs*{\meanx{t}-\meanx{n}} \le A \e^{-\gamma n}.
    \end{align}
    It follows that,
    almost surely,
    it holds for the same $N = N(\omega)$ that
    \begin{align}
        \forall s,t \in [N, \infty): \quad
        \abs*{\meanx{t}-\meanx{s}}
         & \le
        \abs*{\meanx{s}-\meanx{n(s,t)}}
        +
        \abs*{\meanx{t}-\meanx{n(s,t)}} \\
         & \leq
        2 \sup_{u \geq n(s,t)} \abs*{\meanx{u}-\meanx{n(s,t)}}
        \leq 2A \e^{- \gamma n(s, t)}
        \leq 2A \e^{\gamma - \gamma \min\{s,t\}} \, ,
    \end{align}
    where $n(s, t) = \lfloor \min\{s, t\} \rfloor$.
    By setting $A = \frac{\e^{-\gamma}}{2}$, we obtain the conclusion~\eqref{eq:cauchy_sequence}.
\end{proof}

In order to prove~\eqref{eq:as_conv_wmean} and~\eqref{eq:as_conv_particles}, we show almost sure exponential decay of the second centered moment.
\begin{lemma}
    \label{claim:empirical-cov-conv-to-0-almost-surely}
    Let $f$ satisfy \cref{assumption:bounded}.
    Consider the CBO dynamics~\eqref{eq:cbo} with i.i.d.\ initial conditions sampled from some $\mfldis_0 \in \mathcal P_{4}(\real^d)$,
    and assume that $\sigma>0$ is sufficiently small to ensure $\edecay{2},\edecay{4}>0$.
    Fix $\kappa \in \bra*{0,\min\bigl\{\edecay{2}, \frac{\edecay{4}}{2}\bigr\}}$. Then, almost surely, there exists some time $T\ge 0$ such that
    \begin{align}
        \forall t \geq T, \qquad
        \empmomentp{2}{t} \le \e^{-\kappa t}\empmomentp{2}{0}.
    \end{align}
    That is, the empirical covariance converges exponentially to zero as $t\to\infty$, almost surely.
\end{lemma}

\begin{proof}
    Fix $\widetilde \kappa \in \bra*{\kappa, \min\bigl\{\lambda_2,\frac{\edecay{4}}{2}\bigr\}}$.
    The assumptions of \cref{prop:bound-on-bad-set-general-noise} are satisfied with $q = 2$,
    and so for any $A > 0$ it holds up to an irrelevant constant factor on the right-hand side that
    \[
        \proba \left[ \sup_{t \geq 0} \e^{\widetilde \kappa t} \empmomentp{2}{t} \geq \expect \Bigl[ \empmomentp{2}{0} \Bigr] + A \right]
        \lesssim A^{-2}  .
    \]
    Since $\expect [X] = \int_{0}^{\infty} \proba [X > s] \, \d s$ for any $[0, \infty)$-valued random variable~$X$,
    it holds that
    \[
        \expect \left[ \sup_{t \geq 0} \e^{\widetilde \kappa t} \empmomentp{2}{t} \right]
        < + \infty.
    \]
    Therefore $\sup_{t \geq 0} \e^{\widetilde \kappa t} \empmomentp{2}{t}$ is almost surely finite,
    and so, almost surely,
    \[
        \e^{\kappa t} \empmomentp{2}{t}
        \leq
        \e^{- (\widetilde \kappa - \kappa) t} \left( \e^{\widetilde \kappa t} \empmomentp{2}{t} \right)
        \xrightarrow[t \to \infty]{} 0,
    \]
    which concludes the proof.
\end{proof}

\begin{remark}
        Note that for the claim in \cref{claim:Cauchy-sequence} to hold, we do not require the initial samples to be i.i.d.\ 
        as the result follows without using the concentration inequality \cref{prop:bound-on-bad-set-general-noise},
        unlike \cref{claim:empirical-cov-conv-to-0-almost-surely}.
    \end{remark}

\begin{proof} 
    [Proof of~\cref{thm:microscopic-concentration-around-consensus-point}]
    The Cauchy property stated in~\cref{claim:Cauchy-sequence} implies that
    the limit $\lim_{t \to \infty} \meanx{t}$ exists almost surely.
    We divide the rest of the proof into several steps.

    \paragraph{Proof of~\cref{item:almost-sure-exponential-concentration}}
    To prove~\eqref{eq:as_conv_mean}, fix~$\gamma \in \bigl(0, \frac{\edecay{2}}{2}\bigr)$.
    By using~\cref{claim:Cauchy-sequence}
    and taking the limit~$t \to +\infty$ in~\eqref{eq:cauchy_sequence},
    we deduce that almost surely, there exists $T > 0$ such that for all~$s \geq T$ it holds that
    \[
        \abs*{\meanx{s} - \cpt} \le \e^{- \gamma s}.
    \]
    The statement~\eqref{eq:as_conv_mean} immediately follows.
    In order to prove~\eqref{eq:as_conv_wmean},
    fix $\widetilde \gamma \in \Bigl(0, \min \{ \frac{\edecay{2}}{2}, \frac{\edecay{4}}{4} \} \Bigr)$.
    By the triangle inequality and~\cref{lem:m-malpha},
    it holds that
    \begin{align}
        \e^{\widetilde \gamma t} \abs*{\wmx{t} - \cpt} 
        & \leq \e^{\widetilde \gamma t} \abs*{\wmx{t} - \meanx{t}}  +
               \e^{\widetilde \gamma t} \abs*{\meanx{t} - \cpt} \notag                                                                                           \\
        & \leq  \e^{\frac{\alpha(\overline{f} - \underline{f})}{2}} \left( \e^{2\widetilde \gamma t}\empmomentp{2}{t} \right)^{\frac{1}{2}} +  
                \e^{\widetilde \gamma t} \abs*{\meanx{t} - \cpt}
        \xrightarrow[t \to \infty]{} 0 \, ,
        \label{eq:pivoting-mean}
    \end{align}
    where the limit of the first term is justified by~\cref{claim:empirical-cov-conv-to-0-almost-surely},
    while the limit of the second term follows from~\eqref{eq:as_conv_mean}.
    In order to prove~\eqref{eq:as_conv_particles},
    we use \cref{claim:empirical-cov-conv-to-0-almost-surely} and~\eqref{eq:as_conv_mean} to find that almost surely,
    \begin{align}
        \e^{\widetilde \gamma t} \abs*{ \xn{j}_t - \cpt} 
        & \le \abs*{\xn{j}_t - \meanx{t}} + \abs*{\meanx{t} - \cpt} \\
        & \le  J \left(\e^{2 \widetilde \gamma t} \empmomentp{2}{t} \right)^{\frac{1}{2}} + \e^{\widetilde \gamma t} \abs*{\meanx{t}- \cpt}
        \xrightarrow[t \to \infty]{} 0 \, .
    \end{align}

    \paragraph{Proof of~\cref{item:mean-square-exponential-concentration}}
    From~\eqref{eq:proof-cauchy:meanx-sde}, we have that
    \begin{align}
        \notag
        \expect \abs*{ \meanx{t} - \meanx{s}}^2 & \le
        2 \expect \abs*{\int_s^t \meanx{r} - \wmx{r} \, \d r}^2 
        + 2 \expect \abs*{ \frac{\sigma}{J} \sum_{k=1}^J \int_s ^t S\Bigl(\xn{k}_r - \wmeanx{r}\Bigr) \, \d \wn{k}_r}^2  \\
        \label{eq:terms_1_and_2}
        &\eqcolon \mathcal A_1 + \mathcal A_2.
    \end{align}
    By \eqref{eq:holder_trick} and \cref{lem:m-malpha},
    together with \eqref{eq:moments_summary} ( see~\cref{lem:decay-centered-moments}), we have for any $\ell \in (0, \edecay{2})$ that
    \begin{align}
    \mathcal A_1 
    &\le \frac{2}{\ell} \int_s^t \e^{\ell r} \expect \abs*{\meanx{r} - \wmx{r}}^2 \, \d r \\
    &\le \frac{2\e^{\alpha (\overline{f}-\underline{f})}}{\ell} \int_s^t \e^{\ell r} \expect\empmomentp{2}{r} \, \d r
    \le \frac{2\e^{\alpha (\overline{f}-\underline{f})}}{\ell (\edecay{2} - \ell)}  \expect\empmomentp{2}{0} \e^{-(\edecay{2} - \ell) s} \, .
    \end{align}
    By the Itô isometry, the second term in on the right-hand side of~\eqref{eq:terms_1_and_2} equals
    \begin{align}
        \mathcal A_2 & = \frac{2\sigma^2}{J^2} \sum_{k=1}^J \expect \int_s^t 
            \Bigl\lVert S\Bigl(\xn{k}_r - \wmx{r}\Bigr) \Bigr\rVert_{\rm F}^2 \d r
        \le \frac{2\sigma^2\tau(S)}{J^2} \sum_{k=1}^J \expect \int_s^t \abs*{
            \xn{k}_r - \wmx{r}}^2 \d r,
    \end{align}
    and the integrand on the right-hand side can be bounded as
    \begin{align}
        \sum_{k=1^J}\expect \abs*{\xn{k}_r - \wmx{r}}^2 & \le 2 \sum_{k=1^J} \expect \abs*{\xn{k}_r - \meanx{r}}^2 + 2J  \expect \abs*{\meanx{r} - \wmx{r}}^2
        \\ & \le 2 J\bra*{1 + \e^{\alpha (\overline f- \underline f)}}\expect \empmomentp{2}{r}  \le  2 J\bra*{1 + \e^{\alpha (\overline f- \underline f)}}\expect \empmomentp{2}{0} \e^{- \edecay{2}r},
    \end{align}
    where we used~\cref{lem:m-malpha} and \eqref{eq:moments_summary}.
    Therefore,
    \begin{align}
        \mathcal A_2 \le \frac{4\sigma^2\tau(S)}{\edecay{2} J} \bra*{1 + \e^{\alpha (\overline f- \underline f)}}
        \expect \empmomentp{2}{0} \e^{- \edecay{2}s} \, .
    \end{align}
    Combining the bounds on~$\mathcal A_1$ and~$\mathcal A_2$, 
    we obtain the inequality
    \begin{align}
        \label{eq:empirical-mean-diff}
        \expect \abs*{ \meanx{t} - \meanx{s}}^2 \le
        \frac{2\e^{\alpha (\overline{f}-\underline{f})}}{\ell(\edecay{2} - \ell)}  \expect\empmomentp{2}{0} \e^{- (\edecay{2} - \ell) s}
        + \frac{4\sigma^2\tau(S)}{\edecay{2} J} \bra*{1 + \e^{\alpha (\overline f- \underline f)}}
        \expect \empmomentp{2}{0} \e^{- \edecay{2}s} \, .
    \end{align}
    Together with \cref{lem:uit-raw-moments-2norm}, we can apply the dominated convergence theorem to
    obtain~\eqref{eq:L2-conv-means} by taking  $t\to\infty$ in~\eqref{eq:empirical-mean-diff}. 
    In order to prove~\eqref{eq:L2-conv-wmeans}, we use that
    \begin{align}
        \expect\abs*{\wmx{t}-\mathcal M ^X}^2 \le 2 \expect\abs*{\wmx{t} - \meanx{t}}^2 + 2 \expect \abs*{\meanx{t} - \mathcal M ^X}^2\, ,
    \end{align}
    together with the inequality $\expect\abs*{\wmx{t} - \meanx{t}}^2 \le \e^{\alpha(\overline{f}-\underline{f})} \expect\empmomentp{2}{0} \e^{-\edecay{2}t}$.
    Finally, to prove~\eqref{eq:L2-conv-particles},
    we use that the initial joint distribution of the particles is symmetric under permutations of the particles to find that
    \begin{align}
        \expect \abs*{\xn{j}_t - \mathcal M ^X}^2
        \le 2 \expect \empmomentp{2}{t} + 2 \expect \abs*{\meanx{t} - \cpt}^2.
    \end{align}

    \paragraph{Proof of~\cref{item:consensus-point-close-to-target}}
    The decomposition
    \begin{align}
        \nmx{t} - \cptmfl = \frac{1}{J}\sum_{j=1}^J\bra*{\xn{j}_t - \xnl{j}_t} + \frac{1}{J}\sum_{j=1}^J\bra*{\xnl{j}_t - \cptmfl}
    \end{align}
    implies by the triangle and Jensen inequalities that
    \begin{align}
        \left( \expect \abs*{\nmx{t} - \cptmfl}^2 \right)^{\frac{1}{2}} 
        \le \left( \expect \frac{1}{J} \sum_{j=1}^J \abs*{\xn{j}_t - \xnl{j}_t}^2 \right)^{\frac{1}{2}} 
        + \left( \expect \frac{1}{J} \sum_{j=1}^J \abs*{\xnl{j}_t - \cptmfl}^2 \right)^{\frac{1}{2}}.
    \end{align}
   Therefore, we obtain from \cref{thm:uit-mfl} that
    \begin{align}
        \label{eq:empirical-mean-vs-target}
        \left( \expect \abs*{\nmx{t}- \cptmfl}^2 \right)^{\frac{1}{2}}
         & \le \left( \frac{C_{\rm MFL}}{J} \right)^{\frac{1}{2}} + \wasserstein_2\bra[\big]{\mfldis_t, \delta_{\cptmfl}}.
    \end{align}
    In order to apply the dominated convergence theorem, 
    note that
    \[
        \abs*{\nmx{t}- \cptmfl}^2
        \leq 2\abs*{\nmx{t}}^2 + 2\lvert \cptmfl \rvert^2
        \leq \frac{2}{J} \sum_{j=1}^{J} \bigl\lvert \xn{j}_{t} \bigr\rvert^2 + 2 \lvert \cptmfl \rvert^2
        \leq \frac{2}{J} \sum_{j=1}^{J} \sup_{s \geq 0} \bigl\lvert \xn{j}_{s} \bigr\rvert^2 + 2 \lvert \cptmfl \rvert^2 \, .
    \]
    The random variable on the right-hand side is independent of~$t$ and also furthermore integrable by \cref{lem:uit-raw-moments-2norm}.
    Therefore, since $\nmx{t}$ converges almost surely to $\cpt$,
    we can apply the dominated convergence theorem and let $t\to\infty$ in \eqref{eq:empirical-mean-vs-target}, and  since $
        \wasserstein_2\bra[\big]{\mfldis_t, \delta_{\cptmfl}} \rightarrow 0 
    $ as $t \to \infty$ by assumption,
    we get~\eqref{eq:consensus-point-vs-target}.
\end{proof}

\section{Conclusion and perspectives}\label{sec:conclusion}
To our knowledge, this work provides the first quantitative uniform-in-time propagation of chaos result for the original CBO dynamics. Our results rely on a number of novel auxiliary estimates, including a local Lipschitz estimate for the difference between the mean and the weighted mean, exponential decay of centered moments and several concentration inequalities.
These technical hurdles have been avoided in recent related work \cite{huang2024uniformintimemeanfieldlimitestimate,bayraktar2025uniformintimeweakpropagationchaos} by modifying the dynamics.
Combining the raw-moment control with concentration inequalities and standard Monte Carlo estimates yields the desired uniform-in-time propagation of chaos for the unmodified CBO algorithm.

Our analysis has limitations that naturally suggest future work. We treat objective functions $f$
that are globally Lipschitz and bounded. It is possible to generalize to locally Lipschitz and globally bounded objectives; removing the boundedness assumption is more challenging, and is an important next step, since many practical cost functions are unbounded. Our current setting can still be applied in these cases by imposing a cutoff on a domain of interest. In addition, our quantitative rate exhibits an unfavorable dependence on $\alpha$ appearing as $e^{\alpha(\overline{f} - \underline{f})}$. It would be valuable to refine the estimates to eliminate this exponential dependence. 

Building on the work in this manuscript, \cite{choi2025modifiedconsensusbasedoptimizationmodel} shows a uniform-in-time propagation of chaos result for a modified CBO dynamics for which our approach can be adapted to treat a more general class of objective functions. This is achieved by adding a strictly positive constant to the denominator of the weighted mean, avoiding degeneracy.

Beyond generalizing the assumptions on the objective and improving parameter dependence, it would be useful to extend this framework to other interacting particle systems. A promising direction is the second-order (or kinetic) CBO model, which is obtained by including momentum effects, and has been studied in, e.g., \cite{sara_pso,Huang_2023}; establishing a uniform-in-time propagation of chaos result for that model is an open question. Related variants, such as jump–diffusion formulations \cite{Kalise_2023}, also appear amenable to our approach. Finally, developing an analogous uniform-in-time theory directly for time-discrete schemes would tighten the link between analysis and implementations in an effort towards a full theory for consensus-based approaches as they are used in applications.

\section{Proof of auxiliary results}
\label{sec:aux}

We will make use of the following lemma frequently:

\begin{lemma}
    \label{lem:m-malpha}
    Let $q\ge 2$ and let \cref{assumption:bounded} hold. 
    Then for any vector norm~$| \placeholder |$ it holds that
    \begin{align}
        \forall \mu \in \mathcal P_{q}, \qquad
        \Bigl\lvert \wm(\mu) - \mathcal M(\mu) \Bigr\rvert^{q}
     &
     \leq \e^{ \alpha (\overline f - \underline f)} \int  \bigl\lvert x - \mathcal M(\mu) \bigr\rvert^{q} \, \mu(\d x).
    \end{align}
\end{lemma}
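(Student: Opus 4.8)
The plan is to rewrite the difference $\wm(\mu) - \mathcal M(\mu)$ as the expectation of the \emph{centered} variable $x - \mathcal M(\mu)$ under the Gibbs-tilted probability measure, and then to combine Jensen's inequality with an elementary pointwise bound on the Radon--Nikodym derivative of the tilted measure with respect to $\mu$.

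First I would introduce the tilted probability measure
\[
    \mu_\alpha(\d x) := \frac{\e^{-\alpha f(x)}}{\int_{\R^d} \e^{-\alpha f(y)} \, \mu(\d y)} \, \mu(\d x),
\]
which is well defined since $\int \e^{-\alpha f}\,\d\mu \in \bigl[\e^{-\alpha\overline f}, \e^{-\alpha\underline f}\bigr] \subset (0,\infty)$ by \cref{assumption:bounded}. Because $\mu_\alpha$ is a probability measure with $\int x\,\mu_\alpha(\d x) = \wm(\mu)$ by definition, one has directly
\[
    \wm(\mu) - \mathcal M(\mu) = \int_{\R^d} \bigl(x - \mathcal M(\mu)\bigr) \, \mu_\alpha(\d x).
\]
Next I would apply Jensen's inequality: the map $z \mapsto |z|^q$ is convex on $\R^d$ for $q \geq 1$, being the composition of the convex norm with the convex nondecreasing function $t \mapsto t^q$ on $[0,\infty)$; hence
\[
    \bigl\lvert \wm(\mu) - \mathcal M(\mu) \bigr\rvert^q \leq \int_{\R^d} \bigl\lvert x - \mathcal M(\mu) \bigr\rvert^q \, \mu_\alpha(\d x).
\]
Finally I would bound the density pointwise: for every $x$ we have $\e^{-\alpha f(x)} \leq \e^{-\alpha\underline f}$ and $\int \e^{-\alpha f}\,\d\mu \geq \e^{-\alpha\overline f}$, so $\frac{\d\mu_\alpha}{\d\mu}(x) \leq \e^{\alpha(\overline f - \underline f)}$, which gives
\[
    \int_{\R^d} \bigl\lvert x - \mathcal M(\mu) \bigr\rvert^q \, \mu_\alpha(\d x) \leq \e^{\alpha(\overline f - \underline f)} \int_{\R^d} \bigl\lvert x - \mathcal M(\mu) \bigr\rvert^q \, \mu(\d x),
\]
and the claim follows.

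There is no genuine obstacle here. The only points requiring a little care are that the lower bound on the normalizing integral uses $\alpha > 0$ together with \cref{assumption:bounded}, and that the assumption $\mu \in \mathcal P_q$ guarantees the right-hand side is finite, so that the chain of inequalities is meaningful. The hypothesis $q \geq 2$ is in fact stronger than what the argument needs ($q \geq 1$ suffices for convexity of $z \mapsto |z|^q$), and is presumably imposed only for uniformity with the rest of the paper.
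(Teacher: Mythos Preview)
Your proof is correct and follows essentially the same approach as the paper: rewrite $\wm(\mu) - \mathcal M(\mu)$ as the expectation of $x - \mathcal M(\mu)$ under the Gibbs-tilted measure, apply Jensen's inequality, then bound the density pointwise using \cref{assumption:bounded}. The paper does this in one displayed chain without naming the tilted measure, but the argument is identical.
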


\begin{proof}
    This follows directly from Jensen's inequality by estimating
    \begin{align}
        \Bigl\lvert \wm(\mu) - \mathcal M(\mu) \Bigr\rvert^{q}
     & = \left\lvert \frac{\displaystyle \int  \Bigl(x - \mathcal M(\mu) \Bigr) \e^{- \alpha f(x)} \, \mu(\d x)}{\displaystyle\int \e^{-\alpha f(x)} \, \mu(\d x)} \right\rvert^{q}\\
     & \leq \frac{\displaystyle\int  \left\lvert x - \mathcal M(\mu) \right\rvert^{q} \e^{- \alpha f(x)} \, \mu(\d x)}{
        \displaystyle\int \e^{-\alpha f(x)} \, \mu(\d x)}
     \leq \e^{\alpha (\overline f - \underline f)} \int  \bigl\lvert x - \mathcal M(\mu) \bigr\rvert^{q} \, \mu(\d x)\,.  \qedhere
    \end{align}
\end{proof}

\subsection{Moment bounds}

In this section, we prove exponential decay for the centered moments and uniform-in-time raw moment bounds for both the interacting particle system and the mean-field process. 

\subsubsection{Decay of centered moments: interacting particle system}
\label{sec:proof_decay-centered-moments-pnorm-general-noise}

\begin{lemma}
    [Exponential decay of centered moments]
    \label{lem:decay-centered-moments}
    Let $p \geq 2$ and define
    \[
        \empmomentp{p}{t} =  \frac{1}{J} \sum_{j=1}^{J} \Bigl\lvert \xn{j}_t - \nmx{t} \Bigr\rvert^p\,.
    \]
    Under~\cref{assumption:bounded},
    and for any initial law~$\rho^J_0 \in \mathcal P_{\rm sym} (\real^{dJ})$
    it holds that
    \begin{equation}
        \label{eq:lemma_moment_decay}
        \expect \left[ \empmomentp{p}{t} \right] \leq \expect \left[ \empmomentp{p}{0} \right] \e^{-\edecay{p} t},
        \qquad
        \edecay{p} :=   p \left[1-  \frac{1}{2}\bigl(p-2+\tau(S) \bigr) \sigma^2 \Bigl(1 + \e^{\frac{\alpha}{p}(\overline f - \underline f)} \Bigr)^2\right] .
    \end{equation}
    In particular, for sufficiently small~$\sigma$,
    it holds that~$\expect \left[ \empmomentp{p}{t} \right] \to 0$ in the limit as~$t \to \infty$.
\end{lemma}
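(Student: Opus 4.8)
The plan is to apply It\^o's formula to the translation--invariant quantity $\empmomentp{p}{t} = \tfrac1J\sum_{j=1}^J \bigl|\xn{j}_t - \nmx{t}\bigr|^p$, where $\nmx{t} = \nm(\emp{t}) = \tfrac1J\sum_{k=1}^J \xn{k}_t$ denotes the empirical mean. First I would introduce the centered fluctuations $Y^j_t := \xn{j}_t - \nmx{t}$. Subtracting from~\eqref{eq:cbo} its average over~$j$, the common weighted--mean drift $\wmx{t}$ cancels, leaving
\[
    \d Y^j_t = -\,Y^j_t\,\d t + \sigma\,S\bigl(\xn{j}_t - \wmx{t}\bigr)\,\d\wn{j}_t - \frac{\sigma}{J}\sum_{k=1}^J S\bigl(\xn{k}_t - \wmx{t}\bigr)\,\d\wn{k}_t.
\]
Since $x\mapsto|x|^p$ belongs to $C^2(\R^d)$ for $p\geq2$, It\^o's formula applies to $|Y^j_t|^p$; summing over~$j$, dividing by~$J$, taking expectations and discarding the local martingale term (justified by a standard localization argument, and with nothing to prove when the right-hand side is infinite), the contractive drift $-Y^j_t$ contributes exactly $\bigl\langle p|Y^j_t|^{p-2}Y^j_t,\,-Y^j_t\bigr\rangle = -p|Y^j_t|^p$, so I would obtain $\tfrac{\d}{\d t}\expect\empmomentp{p}{t}\leq -p\,\expect\empmomentp{p}{t} + \expect[\mathcal R_t]$, with $\mathcal R_t$ the It\^o correction produced by the diffusion.

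\textbf{Controlling the diffusion correction.} By independence of the Brownian motions, $\d\langle Y^j\rangle_t = \sigma^2\bigl[(1-\tfrac1J)^2 S_j S_j^\t + \tfrac1{J^2}\sum_{k\neq j} S_k S_k^\t\bigr]\d t$, where $S_k := S(\xn{k}_t - \wmx{t})$. The elementary facts to invoke, valid for both noise operators, are $\operatorname{tr}\bigl(S(x)S(x)^\t\bigr) = \tau(S)\,|x|^2$ and $\|S(x)\|_{\rm op}^2\leq|x|^2$, which together give $\operatorname{tr}\bigl(D^2(|\cdot|^p)(y)\,S(x)S(x)^\t\bigr)\leq p\bigl(p-2+\tau(S)\bigr)|y|^{p-2}|x|^2$ for $p\geq2$. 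Applying this with $y=Y^j_t$ and $x=\xn{k}_t-\wmx{t}$, summing over~$j$ and~$k$ and using $\sum_j Y^j_t=0$ to reorganize the double sum, I would arrive, writing $a_j:=|\xn{j}_t-\wmx{t}|^2$, at
\[
    \mathcal R_t \leq \frac{\sigma^2 p\bigl(p-2+\tau(S)\bigr)}{2}\left[\Bigl(1-\tfrac2J\Bigr)\frac1J\sum_j|Y^j_t|^{p-2}a_j + \frac1J\Bigl(\frac1J\sum_j a_j\Bigr)\Bigl(\frac1J\sum_j|Y^j_t|^{p-2}\Bigr)\right].
\]

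\textbf{Closing the estimate; the main obstacle.} To recover the exact decay rate, I would expand $a_j = |Y^j_t|^2 + 2\langle Y^j_t,w_t\rangle + |w_t|^2$ with $w_t := \nm(\emp{t})-\wm(\emp{t})$; using $\sum_j Y^j_t=0$, this gives $\tfrac1J\sum_j a_j = \momentp{2}{\emp{t}}+|w_t|^2$ and $\tfrac1J\sum_j|Y^j_t|^{p-2}a_j = \empmomentp{p}{t} + 2\bigl\langle\tfrac1J\sum_j|Y^j_t|^{p-2}Y^j_t,\,w_t\bigr\rangle + |w_t|^2\,\tfrac1J\sum_j|Y^j_t|^{p-2}$. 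By \cref{lem:m-malpha} with $q=p$ one has $|w_t|\leq\e^{\frac{\alpha}{p}(\overline f-\underline f)}\,\empmomentp{p}{t}^{1/p}=:\beta\,\empmomentp{p}{t}^{1/p}$, and the power--mean inequality gives $\tfrac1J\sum_j|Y^j_t|^{p-1}\leq\empmomentp{p}{t}^{(p-1)/p}$, $\tfrac1J\sum_j|Y^j_t|^{p-2}\leq\empmomentp{p}{t}^{(p-2)/p}$ and $\momentp{2}{\emp{t}}\leq\empmomentp{p}{t}^{2/p}$. Substituting, the bracket collapses to $\bigl[(1-\tfrac2J)(1+\beta)^2 + \tfrac1J(1+\beta^2)\bigr]\empmomentp{p}{t} = \bigl[(1+\beta)^2 - \tfrac1J(1+4\beta+\beta^2)\bigr]\empmomentp{p}{t}\leq(1+\beta)^2\empmomentp{p}{t}$, the last step because $1+4\beta+\beta^2>0$. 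This yields $\tfrac{\d}{\d t}\expect\empmomentp{p}{t}\leq-\edecay{p}\,\expect\empmomentp{p}{t}$ with $\edecay{p}$ as in~\eqref{eq:lemma_moment_decay}, and Gr\"onwall's inequality finishes the argument (the final assertion being immediate since $\edecay{p}>0$ for $\sigma$ small). I expect the bookkeeping in this last step to be the crux: a crude treatment of the off--diagonal noise contributions---which couple distinct particle indices through the subtracted empirical mean---or of the $O(1/J)$ remainder would destroy the clean constant $\bigl(1+\e^{\frac{\alpha}{p}(\overline f-\underline f)}\bigr)^2$, so one must keep these terms and notice that the remainder has a favourable sign.
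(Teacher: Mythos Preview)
Your proposal is correct and follows essentially the same approach as the paper: apply It\^o's formula to $|Y^j_t|^p$ with $Y^j_t=\xn{j}_t-\nmx{t}$, sum over $j$, bound the It\^o correction by $\tfrac{\sigma^2 p(p-2+\tau(S))}{2}$ times a combination of $\frac{1}{J}\sum_j|Y^j_t|^{p-2}|\xn{j}_t-\wmx{t}|^2$ and the corresponding off-diagonal sum, and then close the estimate via~\cref{lem:m-malpha}. The only difference is the final algebraic step: the paper applies H\"older's inequality to $\frac{1}{J}\sum_j |Y^j_t|^{p-2}|\xn{j}_t-\wmx{t}|^2$ and then the triangle inequality in the $\ell^p$-norm, whereas you expand $|\xn{j}_t-\wmx{t}|^2=|Y^j_t+w_t|^2$ and use power-mean inequalities on the resulting pieces. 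Both routes give the same constant $(1+\e^{\frac{\alpha}{p}(\overline f-\underline f)})^2$, and your remark about the favourable sign of the $O(1/J)$ remainder is correct but not essential---the paper simply bounds its prefactor $(1-1/J)$ by~$1$.
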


\begin{proof}[Proof of \cref{lem:decay-centered-moments}]
    We assume that $\expect \left[ \empmomentp{p}{0} \right] < \infty$,
    as otherwise~\eqref{eq:lemma_moment_decay} is trivially satisfied.
    By It\^o's formula,
    it holds that
    \begin{align}
        \d \meanx{t}
        &=
        - \frac{1}{J} \sum_{k=1}^{J} \Bigl(\xn{k}_t - \wmx{t}\Bigr) \, \d t
        + \frac{\sigma}{J} \sum_{k=1}^{J} S\Bigl( \xn{k}_t - \wmx{t} \Bigr) \, \d \wn{k}_t \\
        &=
        - \Bigl(\meanx{t} - \wmx{t}\Bigr) \, \d t
        + \frac{\sigma}{J} \sum_{k=1}^{J} S \Bigl( \xn{k}_t - \wmx{t}\Bigr) \, \d \wn{k}_t.
    \end{align}
    It follows that
    \begin{align}
        \d \Bigl( \xn{j}_t - \meanx{t} \Bigr)
         & =
        - \left(\xn{j}_t - \meanx{t}\right) \, \d t
        + \sigma \left( 1 - \frac{1}{J} \right) S \Bigl( \xn{j}_t - \wmx{t} \Bigr) \, \d \wn{j}_t \\
         & \qquad - \frac{\sigma}{J} \sum_{k\neq j}^{J} S \Bigl( \xn{k}_t - \wmx{t}\Bigr) \, \d \wn{k}_t.
    \end{align}
    In order to formally justify the application of It\^o's formula following,
    it is useful to recall that,
    for any $x, \delta \in \real^d$,
    the following equation holds by a Taylor expansion:
    \begin{align}\label{eq:taylor_exp_1}
        |x + \delta|^p
        = |x|^p + p |x|^{p-2} \langle x, \delta \rangle
        + \frac{p(p-2)}{2} |x|^{p-4} \langle x, \delta \rangle^2
        + \frac{p}{2} |x|^{p-2} \langle \delta, \delta \rangle
        + \mathcal O\bigl(\lvert \delta \rvert^3\bigr).
    \end{align}
    Therefore, it holds that
    \begin{align}
        \d \Bigl\lvert  \xn{j}_t - \meanx{t} \Bigr \rvert^p
         &=
         - p \Bigl\lvert  \xn{j}_t - \meanx{t} \Bigr \rvert^{p} \, \d t \\
         &\qquad
         + \frac{p\bigl(p-2\bigr)}{2} \sigma^2 \left( 1 - \frac{1}{J} \right)^2 \Bigl\lvert  \xn{j}_t - \meanx{t} \Bigr \rvert^{p-4} \, \left\lvert  S\Bigl(\xn{j}_t - \wmx{t}\Bigr) \Bigl(\xn{j}_t - \nmx{t}\Bigr) \right\rvert^2 \, \d t \\
         &\qquad
         + \frac{p\tau(S)}{2} \sigma^2 \left( 1 - \frac{1}{J} \right)^2 \Bigl\lvert  \xn{j}_t - \meanx{t} \Bigr\rvert^{p-2} \, \left\lvert \xn{j}_t - \wmx{t}\right\rvert^2 \, \d t \\
         &\qquad
         + \frac{p\bigl(p-2\bigr)}{2J^2} \sigma^2 \sum_{k \neq j}^{J} \Bigl\lvert  \xn{j}_t - \meanx{t} \Bigr\rvert^{p-4} \, \left\lvert  S\Bigl(\xn{k}_t - \wmx{t}\Bigr) \Bigl(\xn{j}_t - \nmx{t}\Bigr) \right\rvert^2 \, \d t \\
         &\qquad
         + \frac{p\tau(S)}{2J^2} \sigma^2 \sum_{k\neq j}^{J} \Bigl\lvert  \xn{j}_t - \meanx{t} \Bigr \rvert^{p-2} \, \left\lvert \xn{k}_t - \wmx{t}\right\rvert^2 \, \d t \\
         &\qquad
         + p \sigma  \Bigl\lvert  \xn{j}_t - \meanx{t} \Bigr \rvert^{p-2} \left\langle \xn{j}_t - \meanx{t}, S \Bigl( \xn{j}_t - \wmx{t} \Bigr) \, \d \wn{j}_t \right\rangle \\
         &\qquad
         - \frac{p \sigma}{J} \sum_{k = 1}^{J} \Bigl\lvert  \xn{j}_t - \meanx{t} \Bigr \rvert^{p-2} \left\langle \xn{j}_t - \meanx{t}, S \Bigl( \xn{k}_t - \wmx{t} \Bigr) \, \d \wn{k}_t \right\rangle.
    \end{align}
    Using the elementary inequality $\sum_{i=1}^{d} x_i^2 y_i^2 \leq \sum_{i=1}^{d} x_i^2 \sum_{j=1}^{d} y_j^2$,
    we have for any $j,k \in \range{1}{J}$ that
    \begin{equation}
        \label{eq:bound_product_centered_moment}
        \left\lvert  S\Bigl(\xn{k}_t - \wmx{t}\Bigr) \Bigl(\xn{j}_t - \nmx{t}\Bigr) \right\rvert^2
        \leq
        \Bigl\lvert \xn{j}_t - \nmx{t} \Bigr\rvert^2 \Bigl\lvert \xn{k}_t - \wmx{t} \Bigr\rvert^2.
    \end{equation}
    Thus, we obtain the upper bound
    \begin{align}
        \d \Bigl\lvert  \xn{j}_t - \meanx{t} \Bigr \rvert^p
         & \le
         - p \Bigl\lvert  \xn{j}_t - \meanx{t} \Bigr \rvert^{p} \, \d t \\
         &\qquad
         + \frac{p\bigl(p-2 + \tau(S) \bigr)}{2} \sigma^2 \left( 1 - \frac{2}{J} \right) \Bigl\lvert  \xn{j}_t - \meanx{t} \Bigr \rvert^{p-2} \left\lvert \xn{j}_t - \wmx{t} \right\rvert^2 \, \d t \\
         &\qquad
         + \frac{p\bigl(p-2 + \tau(S)\bigr)}{2J^2} \sigma^2 \sum_{k = 1}^{J} \Bigl\lvert  \xn{j}_t - \meanx{t} \Bigr\rvert^{p-2} \, \Bigl\lvert \xn{k}_t - \wmx{t}\Bigr\rvert^2  \, \d t \\
         &\qquad
         + p \sigma  \Bigl\lvert  \xn{j}_t - \meanx{t} \Bigr \rvert^{p-2}
         \left\langle
             \xn{j}_t - \meanx{t},
             S \Bigl( \xn{j}_t - \wmx{t} \Bigr) \, \d \wn{j}_t
         \right\rangle \\
         &\qquad
         - \frac{p \sigma}{J} \sum_{k = 1}^{J}
         \Bigl\lvert  \xn{j}_t - \meanx{t} \Bigr \rvert^{p-2}
         \left\langle
        \xn{j}_t - \meanx{t},
         S\Bigl( \xn{k}_t - \wmx{t} \Bigr) \, \d \wn{k}_t
         \right\rangle.
    \end{align}
    Summing over all the particles and dividing by~$J$,
    we deduce that
    \begin{align}
        \d \empmomentp{p}{t}
         & \le
         - p \empmomentp{p}{t} \, \d t \\
         &\qquad
         + \frac{p\bigl(p-2+\tau(S)\bigr)}{2J} \sigma^2 \left( 1 - \frac{2}{J} \right) \sum_{j=1}^{J} \Bigl\lvert  \xn{j}_t - \meanx{t} \Bigr \rvert^{p-2} \, \left\lvert \xn{j}_t - \wmx{t}\right\rvert^2 \, \d t \\
         &\qquad
         + \frac{p\bigl(p-2+\tau(S)\bigr)}{2J^3} \sigma^2 \sum_{j=1}^{J} \sum_{k=1}^{J} \left\lvert \xn{j}_t - \nmx{t}\right\rvert^{p-2}  \left\lvert \xn{k}_t - \wmx{t}\right\rvert^2 \, \d t \\
         &\qquad
         + \frac{p \sigma}{J} \sum_{j=1}^{J} \Bigl\lvert  \xn{j}_t - \meanx{t} \Bigr \rvert^{p-2}
         \left\langle
             \xn{j}_t - \meanx{t},
             S\Bigl(\xn{j}_t - \wmx{t} \Bigr) \, \d \wn{j}_t
         \right\rangle
         \\
         &\qquad
         - \frac{p \sigma}{J^2}
         \sum_{j=1}^{J}
         \sum_{k=1}^{J}
         \Bigl\lvert  \xn{j}_t - \meanx{t} \Bigr \rvert^{p-2}
         \left\langle
             \xn{j}_t - \meanx{t},
             S\Bigl( \xn{k}_t - \wmx{t} \Bigr) \, \d \wn{k}_t
         \right\rangle.
    \end{align}
    By H\"older's inequality,
    it holds that
    \begin{align}
        \sum_{j=1}^{J} \Bigl\lvert  \xn{j}_t - \meanx{t} \Bigr \rvert^{p-2} \, \left\lvert \xn{j}_t - \wmx{t}\right\rvert^2
        &\leq \biggl( \sum_{j=1}^{J} \Bigl\lvert  \xn{j}_t - \meanx{t} \Bigr \rvert^{p} \biggr)^{\frac{p-2}{p}}
        \biggl(  \sum_{j=1}^{J} \left\lvert \xn{j}_t - \wmx{t}\right\rvert^p \biggr)^{\frac{2}{p}}, \\
        \sum_{j=1}^{J} \sum_{k=1}^{J} \left\lvert \xn{j}_t - \nmx{t}\right\rvert^{p-2}  \left\lvert \xn{k}_t - \wmx{t}\right\rvert^2
        &\leq \biggl( J \sum_{j=1}^{J} \Bigl\lvert  \xn{j}_t - \meanx{t} \Bigr \rvert^{p} \biggr)^{\frac{p-2}{p}}
        \biggl( J \sum_{k=1}^{J} \left\lvert \xn{k}_t - \wmx{t}\right\rvert^p \biggr)^{\frac{2}{p}}.
    \end{align}
    Furthermore, by the triangle inequality,
    we have
    \[
        \biggl( \frac{1}{J} \sum_{j=1}^{J} \left\lvert \xn{j}_t - \wmx{t}\right\rvert^p \biggr)^{\frac{1}{p}}
        \leq \biggl( \frac{1}{J} \sum_{j=1}^{J} \left\lvert \xn{j}_t - \meanx{t}\right\rvert^p \biggr)^{\frac{1}{p}}
        + \Bigl\lvert \meanx{t} - \wmx{t}\Bigr \rvert.
    \]
    By~\cref{lem:m-malpha},
    it holds that $\Bigl\lvert \wm(\emp{t}) - \mathcal M(\emp{t}) \Bigr \rvert^p \leq \e^{\alpha (\overline f - \underline f)} \empmomentp{p}{t}$,
    and so we have
    \begin{align}
        \biggl( \frac{1}{J} \sum_{j=1}^{J} \left\lvert \xn{j}_t - \wmx{t}\right\rvert^p \biggr)^{\frac{1}{p}}
        &\leq \empmomentp{p}{t}^{\frac{1}{p}} + \e^{\frac{\alpha}{p}(\overline f - \underline f)} \empmomentp{p}{t}^{\frac{1}{p}}
        = \Bigl(1 + \e^{\frac{\alpha}{p}(\overline f - \underline f)} \Bigr) \empmomentp{p}{t}^{\frac{1}{p}}.
    \end{align}
    Combining these estimates,
    we deduce that
    \begin{align}
        \d \empmomentp{p}{t}
         &\leq
         - \left( p - \frac{p\bigl(p-2+\tau(S)\bigr)}{2} \sigma^2 \left( 1 - \frac{1}{J} \right) \Bigl(1 + \e^{\frac{\alpha}{p}(\overline f - \underline f)} \Bigr)^2 \right) \empmomentp{p}{t} \, \d t \\
         &\qquad
         + \frac{p \sigma}{J} \sum_{j=1}^{J}
         \Bigl\lvert  \xn{j}_t - \meanx{t} \Bigr \rvert^{p-2}
         \left\langle
             \xn{j}_t - \meanx{t},
             S\Bigl( \xn{j}_t - \wmx{t} \Bigr) \, \d \wn{j}_t
         \right\rangle \\
         &\qquad
         - \frac{p \sigma}{J^2} \sum_{j=1}^{J}\sum_{k=1}^{J}
         \Bigl\lvert  \xn{j}_t - \meanx{t} \Bigr \rvert^{p-2}
         \left\langle
             \xn{j}_t - \meanx{t},
             S\Bigl( \xn{k}_t - \wmx{t} \Bigr) \, \d \wn{k}_t
         \right\rangle.
    \end{align}
    Rewriting this inequality in its integral form,
    and taking the expectation, we obtain that
    \begin{align}
        \expect \left[ \empmomentp{p}{t} \right] \le \expect \left[ \empmomentp{p}{0} \right] - \int_{0}^{t} \edecay{p} \, \expect \left[ \empmomentp{p}{s} \right] \d s.
    \end{align}
    The conclusion then follows from Gr\"onwall's inequality.
\end{proof}

\subsubsection{Decay of centered moments: mean-field process}
\label{sub:mf_consensus}
Here we prove the counterpart of~\cref{lem:decay-centered-moments}
for the mean-field process.

\begin{lemma}[Exponential decay of mean-field centered moments]
    \label{thm:mfl-decay-p-p-cetered-moment}
    Let $p \geq 2$. Suppose that $f\colon \real^d \rightarrow \R$ satisfies \cref{assumption:bounded},
    and that~$\mfldis_0$ has finite moments of all orders.
    Then, for $\left(\xl_t\right)_{t \geq 0}$ that solves~\eqref{eq:mfl_sde_first} we have
    \begin{align}
        &\expect  \Bigl\lvert \xl_t - \expect \xl_t \Bigr\rvert^{p}
        \leq \expect  \Bigl\lvert \xl_0 - \expect \xl_0 \Bigr\rvert^{p} \e^{- \edecay{p} t}
        \quad \text{ for all } t \geq 0\,,
    \end{align}
    where $\edecay{p} >0$ is defined as in \cref{lem:decay-centered-moments}.
\end{lemma}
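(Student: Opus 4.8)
The plan is to follow the proof of \cref{lem:decay-centered-moments} line by line, replacing the empirical average over the $J$ particles by an expectation over~$\Omega$ and the centered particle $\xn{j}_t - \nmx{t}$ by the centered process $\xl_t - \expect \xl_t$. Write $m_t := \expect \xl_t$, which coincides with $\nm(\mfldis_t)$. Taking expectations in~\eqref{eq:mfl_sde_first} and using that the stochastic integral is a true martingale---legitimate because the drift of~\eqref{eq:mfl_sde_first} is affine and the diffusion coefficient is globally Lipschitz with at most linear growth in the state variable, so that $\xl_t$ has finite moments of every order on every finite interval, see~\cite{gerber2023meanfield}---one finds
\[
    \d m_t = - \bigl( m_t - \wm(\mfldis_t) \bigr) \, \d t .
\]
Hence the centered process $Y_t := \xl_t - m_t$ solves $\d Y_t = - Y_t \, \d t + \sigma S\bigl( \xl_t - \wm(\mfldis_t) \bigr) \, \d W_t$, and $\expect \lvert Y_t \rvert^p = \expect \bigl\lvert \xl_t - \expect \xl_t \bigr\rvert^p$ is exactly the quantity to bound.

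Next I would apply It\^o's formula to $\lvert Y_t \rvert^p$, using the Taylor expansion~\eqref{eq: taylor_exp_1} as in \cref{lem:decay-centered-moments}. Because the mean-field diffusion is driven by a single Brownian motion, the computation is in fact cleaner than in the particle case: there are no cross terms coming from ``the other particles'', hence no $\bigl(1 - \tfrac1J\bigr)$ or $\bigl(1 - \tfrac2J\bigr)$ prefactors. Using the elementary bound $\bigl\lvert S(z) y \bigr\rvert^2 \leq \lvert z \rvert^2 \lvert y \rvert^2$, valid for both noise operators, together with $\tr\bigl( S(z) S(z)^\t \bigr) = \tau(S) \lvert z \rvert^2$, the drift part of the second-order It\^o term is bounded by $\tfrac{\sigma^2}{2}\, p\bigl( p - 2 + \tau(S) \bigr) \lvert Y_t \rvert^{p-2} \bigl\lvert \xl_t - \wm(\mfldis_t) \bigr\rvert^2$, while the first-order term contributes $- p \lvert Y_t \rvert^p$. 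Taking expectations annihilates the martingale part (again justified by the finite-time moment bounds) and yields
\[
    \frac{\d}{\d t} \expect \lvert Y_t \rvert^p
    \leq - p \, \expect \lvert Y_t \rvert^p
    + \frac{\sigma^2}{2}\, p \bigl( p - 2 + \tau(S) \bigr) \, \expect \Bigl[ \lvert Y_t \rvert^{p-2} \bigl\lvert \xl_t - \wm(\mfldis_t) \bigr\rvert^2 \Bigr].
\]

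It then remains to absorb the last term. By H\"older's inequality with exponents $\bigl( \tfrac{p}{p-2}, \tfrac{p}{2} \bigr)$,
\[
    \expect \Bigl[ \lvert Y_t \rvert^{p-2} \bigl\lvert \xl_t - \wm(\mfldis_t) \bigr\rvert^2 \Bigr]
    \leq \bigl( \expect \lvert Y_t \rvert^p \bigr)^{\frac{p-2}{p}} \bigl( \expect \bigl\lvert \xl_t - \wm(\mfldis_t) \bigr\rvert^p \bigr)^{\frac{2}{p}} ,
\]
and since $\xl_t - \wm(\mfldis_t) = Y_t + \bigl( \nm(\mfldis_t) - \wm(\mfldis_t) \bigr)$ with the second summand deterministic, the triangle inequality in $L^p(\Omega)$ combined with \cref{lem:m-malpha} applied to $\mu = \mfldis_t$ (which gives $\bigl\lvert \wm(\mfldis_t) - \nm(\mfldis_t) \bigr\rvert \leq \e^{\frac{\alpha}{p}(\overline f - \underline f)} \bigl( \expect \lvert Y_t \rvert^p \bigr)^{1/p}$) shows that $\bigl( \expect \bigl\lvert \xl_t - \wm(\mfldis_t) \bigr\rvert^p \bigr)^{1/p} \leq \bigl( 1 + \e^{\frac{\alpha}{p}(\overline f - \underline f)} \bigr) \bigl( \expect \lvert Y_t \rvert^p \bigr)^{1/p}$. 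Plugging this back in turns the differential inequality into $\tfrac{\d}{\d t} \expect \lvert Y_t \rvert^p \leq - \edecay{p} \, \expect \lvert Y_t \rvert^p$ with $\edecay{p}$ exactly as in~\eqref{eq:lemma_moment_decay}, and Gr\"onwall's inequality concludes.

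The main obstacle, as in \cref{lem:decay-centered-moments}, is not the algebra but the rigorous justification of the two steps involving stochastic integrals: applying It\^o's formula to the function $y \mapsto \lvert y \rvert^p$, which is only $C^2$ (not smooth) when $p$ is not an even integer, and interchanging expectation with the stochastic integral. Both are handled by the usual localization argument, relying on the fact that $\xl_t$ has finite moments of every order on compact time intervals under \cref{assumption:bounded} and the moment hypothesis on~$\mfldis_0$; crucially, only finite-time moment bounds are needed here, not the uniform-in-time raw moment bounds used elsewhere in this work.
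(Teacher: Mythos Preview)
Your proposal is correct and follows essentially the same approach as the paper: derive the SDE for the centered process $\xl_t - \expect \xl_t$, apply It\^o's formula to its $p$-th power, bound the It\^o correction via H\"older's inequality and \cref{lem:m-malpha}, and conclude by Gr\"onwall. Your additional remarks on the regularity of $y \mapsto |y|^p$ and the localization needed to justify taking expectations are well taken and go slightly beyond the level of detail given in the paper.
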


\begin{proof}
    From \eqref{eq:mfl_sde_first},
    we have $\frac{\d}{\d t} \expect \xl_t = - \bra[\big]{\expect{\widebar{X}_t} - \wm(\mfldis_t)}$.
     Therefore, we obtain
    \begin{align}
        \d \bra*{ \widebar{X}_t - \expect {\widebar{X}_t}}
        = - \bra*{\widebar{X}_t - \expect {\widebar{X}_t}} \, \d t
          + \sigma S\bra[\Big]{\widebar{X}_t -\wm(\mfldis_t)} \, \d  W_t\,,
    \end{align}
    Recalling that \eqref{eq:taylor_exp_1} holds,
    we obtain by It\^o's formula that
    \begin{align}
        \d \abs[\Big]{\widebar{X}_t - \expect {\widebar{X}_t}}^p
        & \le -p \abs[\Big]{\widebar{X}_t - \expect {\widebar{X}_t}}^p \d t
        \\ & \qquad +
        \frac{p(p-2)\sigma^2}{2}
        \left\lvert \widebar{X}_t - \expect \widebar{X}_t \right\rvert^{p-4}
        \Bigl\lvert S\bra[\Big]{\widebar{X}_t - \wm(\mfldis_t)} \bigl(\widebar{X}_t - \expect \widebar{X}_t\bigr) \Bigr\rvert^2 \, \d t
        \\ & \qquad
        + \frac{p\sigma^2}{2} \tau(S)
        \left\lvert \widebar{X}_t - \expect \widebar{X}_t \right\rvert^{p-2}
        \Bigl\lvert \widebar{X}_t - \wm(\mfldis_t) \Bigr\rvert^2 \, \d t \\
           & \qquad + p \sigma \left\lvert \widebar{X}_t - \expect \widebar{X}_t \right\rvert^{p-2}
        \Bigl\langle
            \widebar{X}_t - \expect \widebar{X}_t,
            S\bra[\Big]{\widebar{X}_t - \wm(\mfldis_t)} \, \d W_t
        \Bigr\rangle.
    \end{align}
    Using \eqref{eq:bound_product_centered_moment} and H\"older's inequality similarly as in the proof of~\cref{lem:decay-centered-moments},
    then taking the expectation,
    we obtain
    \begin{align}
        \frac{\d}{\d t} \expect \abs[\Big]{\widebar{X}_t - \expect {\widebar{X}_t}}^p
        & \le -p \expect \left[ \abs[\Big]{\widebar{X}_t - \expect {\widebar{X}_t}}^p \right]
        \\ & \qquad +
        \frac{1}{2} p\bigl(p-2 + \tau(S)\bigr)\sigma^2
        \expect \left[ \left\lvert \widebar{X}_t - \expect \widebar{X}_t \right\rvert^{p} \right]^{\frac{p-2}{p}}
        \expect \left[ \left\lvert \widebar{X}_t - \wm(\mfldis_t) \right\rvert^{p} \right]^{\frac{2}{p}}\,.
    \end{align}
    For the last factor of the second term on the right-hand side,
    we have by~\cref{lem:m-malpha} that
    \begin{align}
        \expect \pra*{ \abs[\Big]{ \widebar{X}_t - \wm(\mfldis_t)}^{p} }^{\frac{1}{p}}
        &\le \expect \pra*{ \abs[\Big]{ \widebar{X}_t - \expect \widebar{X}_t}^{p} }^{\frac{1}{p}} + \abs[\Big]{ \mathcal{M}\bra*{\mfldis_t}- \wm(\mfldis_t)} \\
        &\le \bra*{ 1+ \e^{\frac{\alpha}{p} \bra*{\overline{f} - \underline{f}}}}
        \expect \pra*{ \abs[\Big]{ \widebar{X}_t - \expect \widebar{X}_t}^{p}}^{\frac{1}{p}}.
    \end{align}
    In summary, we obtain
    \begin{align}
        \frac{\d}{\d t} \expect \pra*{ \abs[\Big]{ \widebar{X}_t - \expect \widebar{X}_t}^{p}}
        & \le - p \bra*{1- \frac{1}{2} \bigl(p-2 + \tau(S)\bigr) \sigma^2 \bra*{1+ \e^{\frac{\alpha}{p} \bra*{\overline{f} - \underline{f}}}}^2} \expect \pra*{ \abs[\Big]{ \widebar{X}_t - \expect \widebar{X}_t}^{p} },
    \end{align}
    from which the claim follows.
\end{proof}

\begin{remark}
    We presented a self-contained proof of the result for the reader's convenience,
    and because intermediate calculations will be reused in the proof of~\cref{prop:bound-on-bad-set-synchronous-mf}.
    However, note that~\cref{thm:mfl-decay-p-p-cetered-moment} can also be obtained by combining the finite-time mean-field limit result from~\cite[Theorem 2.6]{gerber2023meanfield}
    with the moment decay estimate for the interacting particle system shown in \cref{lem:decay-centered-moments}.
    Here we give a short sketch of this argument.
    Fix $J\in\N$ and consider particles $\xn{1}_t, \dots, \xn{J}_t$ evolving according to~\eqref{eq:cbo} with i.i.d.\ initial conditions $\xn{j}_0 \sim \mfldis_0$,
    coupled to i.i.d.\ copies $\xnl{1}_t, \dots, \xnl{J}_t$ of the mean-field dynamics~\eqref{eq:mfl_sde_first}
    with the same initial conditions and the same driving Brownian motions.
    Then, we have
    \begin{align}
    \left(\expect  \abs[\big]{ \xl_t - \expect \xl_t }^p \right)^{\frac{1}{p}}
    & \le   \left(\expect  \Bigl\lvert  \xnl{}_t - \xn{1}_t  \Bigr\rvert^p \right)^{\frac{1}{p}}
    + \left(\expect  \Bigl\lvert  \xn{1}_t - \nmx{t}  \Bigr\rvert^p \right)^{\frac{1}{p}} \\
    & \qquad
    + \left(\expect  \Bigl\lvert  \nmx{t} - \nmxl{t}  \Bigr\rvert^p \right)^{\frac{1}{p}}
    + \left(\expect  \Bigl\lvert  \nmxl{t} - \expect \xl_t  \Bigr\rvert^p \right)^{\frac{1}{p}}.
    \end{align}
    Taking the limit $J\to\infty$, the first, the third and the fourth term on the right-hand side vanish by \cite[Theorem 2.6]{gerber2023meanfield}.
    For the second term on the right-hand side we have by \cref{lem:decay-centered-moments} that
    \begin{align}
        \expect  \Bigl\lvert  \xn{1}_t - \nmx{t}  \Bigr\rvert^p = \expect \left [ \empmomentp{p}{t} \right]
        \le
        \expect \Bigl\lvert  \xnl{1}_0 - \nmxl{0}  \Bigr\rvert^p
        \e^{-\edecay{p} t}.
    \end{align}
    By the reverse triangle inequality,
    it holds that
    \begin{align}
        \abs*{
            \left(\expect \abs*{ \xnl{1}_0 - \nmxl{0} }^p \right)^{\frac{1}{p}}
            - \left(\expect \abs*{ \xnl{1}_0 - \expect \xnl{1}_0 }^p \right)^{\frac{1}{p}}
        }
        \le \left(\expect \abs*{ \nmxl{0} - \expect \xnl{1}_0 }^p \right)^{\frac{1}{p}} \, \xrightarrow[J\to\infty] \,
        0,
    \end{align}
    and so the claim follows.
\end{remark}

\begin{remark}
    Note that the rate~$\edecay{p}$ of exponential decay of the centered moments depends on $\e^{\alpha (\overline f - \underline f)}$,
    leading to stringent restrictions on the noise coefficient~$\sigma$ if $\alpha \gg 1$.
    We mention that the result proved in \cite{carrillo2018analytical}
    establishes exponential decay of centered moments with a rate that enjoys a better dependence on $\alpha$.
    In particular, if $f$ satisfies~\cref{assumption:bounded,assumption:lip} and has uniformly bounded second derivatives,
    it is shown in~\cite[Theorem 4.1]{carrillo2018analytical} that
    \begin{equation}
        \label{eq:refined_moment}
        \expect  \Bigl\lvert \xl_t - \expect \xl_t \Bigr\rvert^{2}
        \leq
        \e^{- 2\Lambda t}
        \expect  \Bigl\lvert \xl_0 - \expect \xl_0 \Bigr\rvert^{2},
        \qquad \Lambda :=  1 -  \frac{d\sigma^2}{\int_{\R^d} \e^{-\alpha \bigl(f(x) - \underline f\bigr)} \, \mfldis_0(\d x)}  \,.
    \end{equation}
    A similar analysis is conducted in \cite{fornasier2024consensus}.
    For simplicity, we refrain from using refined estimates such as~\eqref{eq:refined_moment} in this work,
    but investigating the extent to which these estimates can be exploited would be a worthwhile direction for future work.
\end{remark}

\subsubsection{Uniform-in-time raw moment bounds: interacting particle system}
\label{sec:uit_raw_interacting}

\begin{lemma}
    [Uniform-in-time bounds for the raw moments]
    \label{lem:uit-raw-moments-2norm}
    Let $p\ge 2$ and assume that~$\rho^J_0 \in \mathcal P_{\rm sym}(\real^{dJ})$.
    Then for all~$J \ge 1$ it holds that
\begin{align}
    \expect \biggl[ \sup_{t \geq 0} \Bigl\lvert \xn{j}_t \Bigr\rvert^p \biggr]^{\frac{1}{p}}
    \leq
    \craw{p}(\sigma, \tau(S), \alpha, \underline f, \overline f) \, \expect \biggl[ \Bigl\lvert \xn{j}_0 \Bigr\rvert^p \biggr]^{\frac{1}{p}} \,,
\end{align}
where $\bra[\big]{\xn{j}_t}_{j \in \range{1}{J}}$ solves~\eqref{eq:cbo} and
\[
    \craw{p}(\sigma, \tau(S), \alpha, \overline f, \underline f)
    = 1 + \frac{p}{\edecay{p}} \Bigl(1 + \sigma \sqrt{\tau(S)} \cbdg{p}^{\frac{1}{p}}\Bigr) \left( 1 + \e^{\frac{\alpha}{p} (\overline f - \underline f)} \right) \,.
\]
\end{lemma}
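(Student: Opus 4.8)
The plan is to estimate each particle through the variation-of-constants (Duhamel) formula associated with the restoring drift $-\bigl(\xn{j}_t - \wmx{t}\bigr)$ in~\eqref{eq:cbo}, namely
\begin{align}
    \xn{j}_t = \e^{-t}\,\xn{j}_0
    + \int_0^t \e^{-(t-s)}\,\wmx{s}\,\d s
    + \sigma \int_0^t \e^{-(t-s)}\,S\bigl(\xn{j}_s - \wmx{s}\bigr)\,\d \wn{j}_s .
\end{align}
The point is that, by the same It\^o computation used in the proof of~\cref{lem:decay-centered-moments}, the empirical mean $m_t := \nmx{t}$ satisfies $\d m_t = R_t \, \d t + \tfrac{\sigma}{J}\sum_{k=1}^{J} S\bigl(\xn{k}_t - \wmx{t}\bigr)\,\d\wn{k}_t$ with $R_t := \wmx{t} - \nmx{t}$, since the consensus drift exactly cancels the restoring drift of $m_t$; thus $m_t$ is driven only by the small remainder $R_t$ and a martingale. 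Substituting $\wmx{s} = m_s + R_s$ into the first display and using the integrated form of $m_s$, the drift integral telescopes, and bounding $\sup_{t\ge0}\abs{\xn{j}_t}$ reduces to controlling: the initial-data term $\e^{-t}\xn{j}_0$, which in $L^p$ gives a constant multiple of $\expect\bigl[\abs{\xn{j}_0}^p\bigr]^{1/p}$; the running integral $\int_0^{\infty}\abs{R_s}\,\d s$; and the two stochastic integrals (the one above and the martingale part of $m_t$).

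For the remainder term, \cref{lem:m-malpha} with $q=p$ gives $\abs{R_s} \le \e^{\frac{\alpha}{p}(\overline f - \underline f)}\,\empmomentp{p}{s}^{1/p}$, while \cref{lem:decay-centered-moments} gives $\expect\bigl[\empmomentp{p}{s}\bigr] \le \expect\bigl[\empmomentp{p}{0}\bigr]\,\e^{-\edecay{p}s}$; hence, by Minkowski's integral inequality,
\begin{align}
    \expect\biggl[\Bigl(\int_0^{\infty}\abs{R_s}\,\d s\Bigr)^{p}\biggr]^{1/p}
    \le \e^{\frac{\alpha}{p}(\overline f - \underline f)}\int_0^{\infty}\expect\bigl[\empmomentp{p}{s}\bigr]^{1/p}\,\d s
    \le \frac{p}{\edecay{p}}\,\e^{\frac{\alpha}{p}(\overline f - \underline f)}\,\expect\bigl[\empmomentp{p}{0}\bigr]^{1/p},
\end{align}
which is where the prefactor $\frac{p}{\edecay{p}}$ comes from. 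For the stochastic integrals I would apply the Burkholder--Davis--Gundy inequality with the explicit constant $\cbdg{p}$ recalled in~\cref{sec:bdg}, using the identity $\norm{S(x)}_{\rm F}^2 = \tau(S)\abs{x}^2$; the relevant quadratic variations are controlled by $\tfrac1J\sum_{k}\abs{\xn{k}_s - \wmx{s}}^2 \le \bigl(1 + \e^{\frac{\alpha}{p}(\overline f - \underline f)}\bigr)^2 \empmomentp{p}{s}^{2/p}$ (a H\"older step followed by~\cref{lem:m-malpha}), which decays exponentially in expectation by~\cref{lem:decay-centered-moments}, producing the factor $\sigma\sqrt{\tau(S)}\,\cbdg{p}^{1/p}\bigl(1 + \e^{\frac{\alpha}{p}(\overline f - \underline f)}\bigr)$. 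Combining the three contributions, using exchangeability of $\rho^J_0 \in \mathcal P_{\rm sym}$ to identify $\tfrac1J\sum_k(\cdot)$ with the law of a single particle and the inequality $\momentp{p}{\mu} \le 2^p\rawmomentp{p}{\mu}$ to pass from central to raw moments, yields the bound with $\craw{p}$ as stated.

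The main obstacle is that the empirical mean $m_t = \nmx{t}$ does \emph{not} decay in time: a naive application of Duhamel's formula would bound $\int_0^t \e^{-(t-s)}\abs{\wmx{s}}\,\d s$ by $\sup_s\abs{\wmx{s}} \lesssim \sup_s\abs{m_s}$, reintroducing a non-contractive $\sup_t\abs{\xn{j}_t}$ on the right-hand side and preventing the estimate from closing. The resolution is precisely the structural fact above --- that $m_t$ only feels the vanishing remainder $R_t$ and a martingale --- which lets one track $m_t$ by its own integral equation rather than by the crude estimate. A second, more technical, point is controlling the supremum over the \emph{unbounded} interval $[0,\infty)$ of the continuous martingales: this can be handled either by rewriting $\int_0^t \e^{-(t-s)}\,\d M_s = \e^{-t}\int_0^t \e^{s}\,\d M_s$ and applying BDG to the rescaled martingale, or by partitioning $[0,\infty) = \bigcup_{k\ge0}[k,k+1]$ and summing the resulting geometric series; in both cases convergence relies on the exponential decay from~\cref{lem:decay-centered-moments}, and thus on $\sigma$ being small enough that $\edecay{p}>0$ (which holds in particular for $\sigma<\widetilde\sigma$ in the range of $p$ appearing in the proofs of~\cref{thm:uit-mfl,thm:stab_particle}).
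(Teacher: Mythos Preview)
Your Duhamel-based strategy can be made to work, but it is a detour compared with the paper's argument, and as written it will not reproduce the stated constant~$\craw{p}$.

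The paper does not use the variation-of-constants formula at all. It simply writes
\[
    \bigl\lvert \xn{j}_t - \xn{j}_0 \bigr\rvert
    \le \biggl\lvert \int_0^t \bigl(\xn{j}_s - \wmx{s}\bigr)\,\d s \biggr\rvert
    + \sigma \biggl\lvert \int_0^t S\bigl(\xn{j}_s - \wmx{s}\bigr)\,\d\wn{j}_s \biggr\rvert,
\]
takes the supremum over $t\in[0,T]$, passes to $L^p(\Omega)$, and applies BDG to the stochastic integral. The crucial observation you missed is that the \emph{entire} drift integrand $\xn{j}_s - \wmx{s}$ already has exponentially decaying $L^p$ moments: by exchangeability and the triangle inequality together with~\cref{lem:m-malpha},
\[
    \expect\Bigl[\bigl\lvert \xn{j}_s - \wmx{s}\bigr\rvert^p\Bigr]^{1/p}
    \le \bigl(1 + \e^{\frac{\alpha}{p}(\overline f - \underline f)}\bigr)\,\expect\bigl[\empmomentp{p}{s}\bigr]^{1/p}
    \le \bigl(1 + \e^{\frac{\alpha}{p}(\overline f - \underline f)}\bigr)\,\e^{-\edecay{p}s/p}\,\expect\bigl[\empmomentp{p}{0}\bigr]^{1/p}.
\]
There is no need to isolate the remainder $R_s = \wmx{s} - \nmx{s}$ or to track the empirical mean $m_t$ separately; both the drift and the diffusion already see only this decaying quantity. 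A weighted H\"older inequality then turns each time integral into a factor $p/\edecay{p}$, and summing the drift and diffusion contributions gives exactly the form of~$\craw{p}$.

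By contrast, your Duhamel representation puts $\wmx{s}$ itself (which does \emph{not} decay) into the drift integral, forcing you to reinject the equation for $m_t$. What you call ``telescoping'' is really a term-by-term bound after substituting $m_s = m_0 + \int_0^s R_u\,\d u + M_s$: it produces an extra $m_0$ contribution, a second copy of the $R$-integral, and two additional martingale terms (the $M_t$ from $m_t$ and the exponentially weighted stochastic integral), each requiring its own BDG estimate and its own handling of the infinite time horizon. This is all feasible, but the accumulated factors will not collapse to the clean constant in the statement. Also note a slip in your quadratic-variation bound: applying~\cref{lem:m-malpha} to $\bigl\lvert \nmx{s} - \wmx{s}\bigr\rvert$ with $q=2$ gives the factor $\e^{\frac{\alpha}{2}(\overline f - \underline f)}$, not $\e^{\frac{\alpha}{p}(\overline f - \underline f)}$; the paper avoids this mismatch by working directly in $L^p$.
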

\begin{proof}
    Rewriting~\eqref{eq:cbo} in integral form and using the triangle inequality,
    we obtain 
    \begin{align}
        \Bigl\lvert \xn{j}_t - \xn{j}_0 \Bigr\rvert
        & \leq \biggl\lvert  \int_{0}^{t} \Bigl( \xn{j}_s - \wmx{s} \Bigr) \, \d s \biggr\rvert
        + \sigma\left\lvert  \int_0^t S\bra[\Big]{ \xn{j}_s - \wmx{s}} \, \d \wn{j}_s \right\lvert \,.
    \end{align}
    Fix $T > 0$. Taking the supremum over $t \in [0, T]$, 
    then taking the $L^p(\Omega)$ norm and using the triangle inequality,
    we have
    \begin{align}\label{eq:int_ineq_1}
        \expect \biggl[ \sup_{t \in [0, T]} \Bigl\lvert \xn{j}_t - \xn{j}_0 \Bigr\rvert^p \biggr]^{\frac{1}{p}}
        &\leq \expect \left[ \sup_{t \in [0, T]} \left\lvert \int_{0}^{t}  \Bigl(\xn{j}_s - \wmx{t}\Bigr) \, \d s \right\rvert^p \right]^{\frac{1}{p}} \\
        & \qquad + \sigma \expect \left[ \sup_{t \in [0, T]} \abs*{\int_{0}^{t} S \bra[\Big]{ \xn{j}_s - \wmx{s}} \, \d \wn{j}_s }^p \,  \right]^{\frac{1}{p}}.
    \end{align}
    Thus, by using the Burkholder--Davis--Gundy inequality (\cref{thm:BDG_ineq}),
    we can bound the last term above as
    \begin{align}
        \cbdg{p}^{1/p}\sigma \expect \left[   \biggl(\int_{0}^{T} \Bigl\lVert S \bra*{\xn{j}_t - \wmx{t}} \Bigr\rVert_{\rm F}^2 \, \d t\biggr)^{\frac{p}{2}} \right]^{\frac{1}{p}} \,.
    \end{align}
    By H\"older's inequality, it holds for any function $h\colon \real \to \real$ and any $r \geq 1$ and $\ell > 0$ that
    \begin{align}
        \left\lvert \int_{0}^{T}  h(t)  \, \d t \right\rvert^r
        &= \left\lvert \int_{0}^{T}  \e^{- \frac{r-1}{r} \ell t} \cdot \e^{\frac{r-1}{r} \ell t} h(t)  \, \d t \right\rvert^r \\
        &\leq  \left( \int_{0}^{T}  \e^{- \ell t} \, \d t \right)^{r-1}
        \int_{0}^{T} \e^{(r-1)\ell t} \bigl\lvert h(t) \bigr\rvert^r  \, \d t
        \leq \frac{1}{\ell^{r-1}}
        \int_{0}^{T} \e^{(r-1)\ell t} \bigl\lvert h(t) \bigr\rvert^r  \, \d t.
        \label{eq:holder_trick}
    \end{align}
    Fixing $\ell = \frac{\edecay{p}}{p} \leq 1$ with $\edecay{p}$ as defined in \eqref{eq:lemma_moment_decay},
    we apply \eqref{eq:holder_trick} to both integrals on the right-hand side of \eqref{eq:int_ineq_1},
    with $r = p$ and $r = \frac{p}{2}$ respectively.
    Then, using that
    \[
        \tr \pra*{ S\bra*{\xn{j}_t - \wmx{t}}^2 }
        = \tau(S) \abs*{\xn{j}_t - \wmx{t} }^2\,,
    \]
    we obtain
    \begin{equation}
        \label{eq:bound_raw_moment}
        \expect \biggl[ \sup_{t \in [0, T]} \Bigl\lvert \xn{j}_t - \xn{j}_0 \Bigr\rvert^p \biggr]^{\frac{1}{p}}
        \leq \left( \frac{p}{\edecay{p}} \right)^{\frac{p-1}{p}}  \Bigl(1 + \sigma \sqrt{\tau(S)} \cbdg{p}^{\frac{1}{p}}\Bigr)
        \left( \int_{0}^{T} \e^{\frac{p-1}{p} \edecay{p} t} \expect \left[ \left\lvert \xn{j}_t - \wmx{t}\right\rvert^p \right]  \, \d t \right)^{\frac{1}{p}}.
    \end{equation}
    Now note that, by~\cref{lem:decay-centered-moments} and~\cref{lem:m-malpha}, we have
    \begin{align}
        \expect \left[ \left\lvert \xn{j}_t - \wmx{t}\right\rvert^p \right]^{\frac{1}{p}}
        &\leq \expect \left[ \left\lvert \xn{j}_t - \nmx{t}\right\rvert^p \right]^{\frac{1}{p}}
        + \expect \left[ \left\lvert \nmx{t} - \wmx{t}\right\rvert^p \right]^{\frac{1}{p}} \\
        &\leq \left( 1 + \e^{\frac{\alpha}{p} (\overline f - \underline f)} \right) \Bigl(  \expect \left[ \empmomentp{p}{0} \right]  \, \e^{-\edecay{p} t}  \Bigr)^{\frac{1}{p}},
    \end{align}
    where we used exchangeability of the initial law~$\rho^J_0 \in \mathcal P_{\rm sym}(\real^{dJ})$,
    so that
    \[
        \expect \biggl[ \left\lvert \xn{j}_{0} - \nmx{0}\right\rvert^p \biggr]
        = \expect \biggl[ \frac{1}{J} \sum_{k=1}^{J} \left\lvert \xn{k}_{0} - \nmx{0}\right\rvert^p \biggr]
        = \expect \Bigl[ \empmomentp{p}{0} \Bigr].
    \]
    Substituting this bound in~\eqref{eq:bound_raw_moment} leads to
    \[
        \expect \biggl[ \sup_{t \in [0, T]} \Bigl\lvert \xn{j}_t - \xn{j}_0 \Bigr\rvert^p \biggr]^{\frac{1}{p}}
        \leq \left( \frac{p}{\edecay{p}} \right)^{\frac{p-1}{p}}  \Bigl(1 + \sigma \sqrt{\tau(S)}  \cbdg{p}^{\frac{1}{p}} \Bigr) \left( 1 + \e^{\frac{\alpha}{p} (\overline f - \underline f)} \right) \expect \left[ \empmomentp{p}{0} \right]^{\frac{1}{p}} \left( \frac{p}{\edecay{p}} \right)^{\frac{1}{p}}.
    \]
    Since~$T$ was arbitrary,
    it follows from the monotone convergence theorem that
    \[
        \expect \biggl[ \sup_{t \geq 0} \Bigl\lvert \xn{j}_t \Bigr\rvert^p \biggr]^{\frac{1}{p}}
        \leq
        \expect \biggl[ \Bigl\lvert \xn{j}_0 \Bigr\rvert^p \biggr]^{\frac{1}{p}}
        +
        \frac{p}{\edecay{p}} \Bigl(1 + \sigma \sqrt{\tau(S)} \cbdg{p}^{\frac{1}{p}} \Bigr) \left( 1 + \e^{\frac{\alpha}{p} (\overline f - \underline f)} \right) \expect \left[ \empmomentp{p}{0} \right]^{\frac{1}{p}} \,.
    \]
    Recall that from \eqref{eq:raw-centered-moment-bound}, the centered moments are bounded in terms of the raw moments by $\momentp{p}{\mu}
            \leq 2^p \rawmomentp{p}{\mu}$, and so the conclusion follows.
\end{proof}

\subsubsection{Uniform-in-time raw moment bounds: mean-field process}
\label{sec:uit_raw_mean_field}
\begin{lemma}
    [Uniform-in-time mean-field raw moment bound]
    \label{lem:uit-raw-moments-synchronous-system}
    Assume that~$\mfldis_0 \in \mathcal P(\real^{d})$.
    Then it holds for all~$p \geq 2$ that
    \begin{align}
        \expect \biggl[ \sup_{t \geq 0} \bigl\lvert \xl_t \bigr\rvert^p \biggr]^{\frac{1}{p}}
        \leq \craw{p}(\sigma, \tau(S), \alpha, \overline f, \underline f)  \expect \biggl[ \bigl\lvert \xl_0 \bigr\rvert^p \biggr]^{\frac{1}{p}} \,,
    \end{align}
    where~$\craw{p}$ is the constant from~\cref{lem:uit-raw-moments-2norm}.
\end{lemma}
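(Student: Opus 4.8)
The plan is to transcribe the proof of \cref{lem:uit-raw-moments-2norm} almost verbatim, working with the single McKean--Vlasov process $\xl_t$ solving \eqref{eq:mfl_sde_first} in place of the particle system, and using \cref{thm:mfl-decay-p-p-cetered-moment} in place of \cref{lem:decay-centered-moments}. Since the bound is vacuous unless $\expect\bigl[\lvert\xl_0\rvert^p\bigr]<\infty$, I may assume the latter. First I would rewrite \eqref{eq:mfl_sde_first} in integral form and use the triangle inequality to split $\lvert\xl_t-\xl_0\rvert$ into a drift part $\bigl\lvert\int_0^t(\xl_s-\wm(\mfldis_s))\,\d s\bigr\rvert$ and a stochastic part $\sigma\bigl\lvert\int_0^t S(\xl_s-\wm(\mfldis_s))\,\d W_s\bigr\rvert$. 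Fixing $T>0$, taking the supremum over $t\in[0,T]$, then the $L^p(\Omega)$-norm, and splitting once more by the triangle inequality, I would control the stochastic part with the Burkholder--Davis--Gundy inequality (\cref{thm:BDG_ineq}) and use the identity $\lVert S(x)\rVert_{\rm F}^2=\tr\bigl[S(x)^2\bigr]=\tau(S)\lvert x\rvert^2$.

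Next I would apply the H\"older trick \eqref{eq:holder_trick} with $\ell=\edecay{p}/p\le1$ to the drift integral (with exponent $r=p$) and to the quadratic-variation integral (with $r=p/2$), exactly as in \cref{lem:uit-raw-moments-2norm}, reaching the intermediate bound
\[
    \expect\biggl[\sup_{t\in[0,T]}\bigl\lvert\xl_t-\xl_0\bigr\rvert^p\biggr]^{\frac{1}{p}}
    \le\Bigl(\frac{p}{\edecay{p}}\Bigr)^{\frac{p-1}{p}}\Bigl(1+\sigma\sqrt{\tau(S)}\,\cbdg{p}^{\frac{1}{p}}\Bigr)
    \Bigl(\int_0^T\e^{\frac{p-1}{p}\edecay{p}t}\,\expect\bigl[\lvert\xl_t-\wm(\mfldis_t)\rvert^p\bigr]\,\d t\Bigr)^{\frac{1}{p}}.
\]
To bound the integrand I would combine the triangle inequality with \cref{lem:m-malpha}, which gives $\expect\bigl[\lvert\xl_t-\wm(\mfldis_t)\rvert^p\bigr]^{1/p}\le\bigl(1+\e^{\frac{\alpha}{p}(\overline f-\underline f)}\bigr)\,\expect\bigl[\lvert\xl_t-\expect\xl_t\rvert^p\bigr]^{1/p}$, and then invoke \cref{thm:mfl-decay-p-p-cetered-moment}, which gives $\expect\bigl[\lvert\xl_t-\expect\xl_t\rvert^p\bigr]\le\expect\bigl[\lvert\xl_0-\expect\xl_0\rvert^p\bigr]\,\e^{-\edecay{p}t}$. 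After substitution the time integral collapses to $\int_0^T\e^{-\edecay{p}t/p}\,\d t\le p/\edecay{p}$.

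Finally I would let $T\to\infty$ by monotone convergence, use $\expect\bigl[\sup_{t\ge0}\lvert\xl_t\rvert^p\bigr]^{1/p}\le\expect\bigl[\lvert\xl_0\rvert^p\bigr]^{1/p}+\expect\bigl[\sup_{t\ge0}\lvert\xl_t-\xl_0\rvert^p\bigr]^{1/p}$, and bound the centered moment $\expect\bigl[\lvert\xl_0-\expect\xl_0\rvert^p\bigr]$ by a multiple of $\expect\bigl[\lvert\xl_0\rvert^p\bigr]$ via \eqref{eq: raw-centered-moment-bound}; tracking the exponential prefactors then yields exactly the constant $\craw{p}$ from \cref{lem:uit-raw-moments-2norm}. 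I do not expect any genuine obstacle here: the mean-field process is driven by a single Brownian motion rather than a coupled sum, so the stochastic term is if anything simpler than in the particle case, and the only real bookkeeping is making the exponential factors combine so that the prefactor comes out as $\craw{p}$. The one point needing a word of care is the reduction to the case $\expect[\lvert\xl_0\rvert^p]<\infty$ (and, implicitly, to $\mfldis_0$ having a finite $p$-th moment, which is all that \cref{thm:mfl-decay-p-p-cetered-moment} actually requires).
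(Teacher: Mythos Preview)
Your proposal is correct, but it takes a genuinely different route from the paper. The paper does \emph{not} repeat the calculation of \cref{lem:uit-raw-moments-2norm} for the mean-field process; instead it argues indirectly via the finite-time mean-field limit of~\cite[Theorem 2.6]{gerber2023meanfield}. Concretely, the paper synchronously couples the $J$-particle system $\xnn{1}{J}_t$ to the mean-field particle $\xnl{1}_t$, writes
\[
    \expect\Bigl[\sup_{t\in[0,T]}\bigl\lvert\xnl{1}_t\bigr\rvert^p\Bigr]^{1/p}
    \le \expect\Bigl[\sup_{t\in[0,T]}\bigl\lvert\xnn{1}{J}_t\bigr\rvert^p\Bigr]^{1/p}
    + \expect\Bigl[\sup_{t\in[0,T]}\bigl\lvert\xnl{1}_t-\xnn{1}{J}_t\bigr\rvert^p\Bigr]^{1/p},
\]
bounds the first term by $\craw{p}\,\expect[\lvert\xnl{1}_0\rvert^p]^{1/p}$ using \cref{lem:uit-raw-moments-2norm} (the initial conditions coincide), and lets $J\to\infty$ so that the coupling term vanishes; monotone convergence in~$T$ then finishes.

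Your direct approach is arguably cleaner and more self-contained: it avoids invoking an external quantitative propagation-of-chaos result just to transfer a moment bound, and the single-Brownian-motion structure of~\eqref{eq:mfl_sde_first} indeed makes the stochastic term simpler than in the particle proof. The paper's approach, on the other hand, is shorter on the page because it reuses \cref{lem:uit-raw-moments-2norm} wholesale and guarantees the \emph{identical} constant $\craw{p}$ without any bookkeeping. One small caveat in your write-up: \cref{thm:mfl-decay-p-p-cetered-moment} as stated assumes $\mfldis_0$ has finite moments of all orders, not just order~$p$; you correctly observe that only the $p$-th moment is actually used in its proof, but strictly speaking you are appealing to a slight sharpening of the lemma's hypotheses.
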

\begin{proof}
    \label{remark:alternative_proof}
    We prove the statement by combining the finite-time mean-field limit result from~\cite[Theorem 2.6]{gerber2023meanfield}
    with the raw moment bounds for the interacting particle system given in~\cref{lem:uit-raw-moments-2norm}.
    To be more precise,
    for~$J \in \nat_{>0}$,
    we consider a synchronous coupling between the interacting particle system~\eqref{eq:cbo} of size~$J$
    and the same number of copies of the mean-field system.
    By the triangle inequality, it holds that
    \begin{align}
        \expect
        \biggl[ \sup_{t \in [0, T]} \Bigl\lvert \xnl{1}_t \Bigr\rvert^p \biggr]^{\frac{1}{p}}
        & \leq \expect \biggl[
        \sup_{t \in [0, T]}
        \Bigl\lvert \xnn{1}{J}_t \Bigr\rvert^p \biggr]^{\frac{1}{p}} + \expect \biggl[
        \sup_{t \in [0, T]}
        \Bigl\lvert \xnl{1}_t  - \xnn{1}{J}_t \Bigr\rvert^p \biggr]^{\frac{1}{p}},
    \end{align}
    where we write~$\xnn{1}{J}_t$ instead of our usual notation~$\xn{1}_t$ to emphasize the size of the system.
    Taking the limit~$J \to \infty$, we deduce from the finite-time mean-field limit theorem \cite[Theorem 2.6]{gerber2023meanfield}
    and~\cref{lem:uit-raw-moments-2norm} that
    \begin{align}
        \expect
        \biggl[ \sup_{t \in [0, T]} \Bigl\lvert \xnl{1}_t \Bigr\rvert^p \biggr]^{\frac{1}{p}}
        \leq \lim_{J \to \infty} \expect \biggl[ \sup_{t \in [0, T]} \Bigl\lvert \xnn{1}{J}_t \Bigr\rvert^p \biggr]^{\frac{1}{p}}
        \leq \craw{p}(\sigma, 	\tau(S), \alpha, \underline f, \overline f)  \expect \biggl[ \Bigl\lvert \xnl{1}_0 \Bigr\rvert^p \biggr]^{\frac{1}{p}}.
    \end{align}
    Since this holds for all~$T > 0$,
    the result follows by taking~$T \to +\infty$ and using the monotone convergence theorem.
\end{proof}

\subsection{Concentration inequalities}
\label{sub:concent}

The following simple observation,
which is based on the Burkholder--Davis--Gundy inequality,
turns out to be quite powerful since it enables to show concentration bounds for the microscopic CBO interacting particle system \eqref{eq:cbo}.
\begin{lemma}
    \label{lem:concentration-ineq}
    Fix $q\ge2$ and $J\in\N$.
    Let $\wn{1}_t, \dots, \wn{J}_t$ be independent Brownian motions in $\R^d$ and $\bra*{\mathcal F_t}_{t\ge 0}$ be the filtration generated by them.
    Let $\bigl(\sigma_j(t)\bigr)_{t\ge 0}$ for $j=1,\dots, J$ be $\R^d$-valued $\mathcal F_t$-adapted stochastic processes such that
    the function~$s \to \expect\pra[\big]{\abs*{ \sigma_j(s)}^q}$ belongs to $L^1(0, T)$.
    Consider the~$\R$-valued martingale
    \begin{align}
        M_t := \frac{1}{J} \sum_{j=1}^{J} \int_{0}^{t} \scp*{\sigma_j(s) , \d \wn{j}_s}.
    \end{align}
    Then, it holds for any $\ell > 0$ and $t\le T$ that
    \begin{align}
        \label{eq:concentration:sup-martingale}
        \expect \pra*{ \sup_{s\in[0,t]} \abs*{M_s}^q} \le \frac{\cbdg{q}}{J^{\frac{q}{2}}} \frac{1}{J}\sum_{j=1}^J \frac{1}{\ell^{\frac{q}{2}-1}}
        \int_{0}^{t} \e^{(\frac{q}{2}-1)\ell s} \expect\pra[\Big]{\abs*{ \sigma_j(s)}^q} \, \d s.
    \end{align}
    Furthermore,
    if $\bra*{Y_t}_{t\ge0}$ is a $\R$-valued stochastic process such that $Y_t \le Y_0 + M_t$ for all $0\le t\le T$, then
    \begin{align}
        \label{eq:concentration:sup-Y-bound}
        \forall A > 0, \qquad
       & \proba \left[ \sup_{s\in[0, t]} Y_s  \ge \expect Y_0 + A \right]
       \le \frac{2^q}{A^q} \expect \Bigl[ \bigl\lvert Y_0 - \expect Y_0 \bigr\rvert^q \Bigr]
       + \frac{2^q}{A^q}   \expect \pra*{ \sup_{s\in[0,t]} \abs*{M_s}^q}.
    \end{align}
\end{lemma}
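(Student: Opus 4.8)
The plan is to prove the two displays in turn. For \eqref{eq:concentration:sup-martingale}, I would first compute the quadratic variation of $M$: because the driving Brownian motions $\wn{1},\dots,\wn{J}$ are independent, the real-valued stochastic integrals $\int_0^\cdot \scp*{\sigma_j(s),\d\wn{j}_s}$ have vanishing cross-variations, so $\langle M\rangle_t = J^{-2}\sum_{j=1}^J \int_0^t \abs*{\sigma_j(s)}^2\,\d s$. Applying the Burkholder--Davis--Gundy inequality (\cref{thm:BDG_ineq}) with exponent $q$ bounds $\expect\bigl[\sup_{s\in[0,t]}\abs*{M_s}^q\bigr]$ by $\cbdg{q}\,\expect\bigl[\langle M\rangle_t^{q/2}\bigr]$, and writing $J^{-2}=J^{-1}\cdot J^{-1}$ already isolates the Monte-Carlo factor $J^{-q/2}$.

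Next, since $q\ge 2$ the function $x\mapsto x^{q/2}$ is convex, so Jensen's inequality applied to the uniform average over $j\in\range{1}{J}$ gives $\bigl(\tfrac1J\sum_{j}\int_0^t\abs*{\sigma_j(s)}^2\,\d s\bigr)^{q/2}\le \tfrac1J\sum_j\bigl(\int_0^t\abs*{\sigma_j(s)}^2\,\d s\bigr)^{q/2}$. To each summand I would then apply the elementary Hölder bound \eqref{eq:holder_trick} with $r=q/2\ge 1$, $h(s)=\abs*{\sigma_j(s)}^2$, and the given $\ell>0$, which produces exactly the weight $\ell^{-(q/2-1)}\e^{(q/2-1)\ell s}$. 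Taking expectations---all finite by the $L^1(0,T)$ hypothesis on $s\mapsto\expect[\abs*{\sigma_j(s)}^q]$---and collecting constants yields \eqref{eq:concentration:sup-martingale}.

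For \eqref{eq:concentration:sup-Y-bound}, the hypothesis $Y_s\le Y_0+M_s$ for all $s\le t$ together with $M_0=0$ gives $\sup_{s\in[0,t]}Y_s\le Y_0+\sup_{s\in[0,t]}\abs*{M_s}$, hence the event $\{\sup_{s\in[0,t]}Y_s\ge\expect Y_0+A\}$ is contained in $\{\abs*{Y_0-\expect Y_0}\ge A/2\}\cup\{\sup_{s\in[0,t]}\abs*{M_s}\ge A/2\}$. A union bound followed by Markov's inequality at order $q$ applied to each of the two events produces the two terms on the right-hand side of \eqref{eq:concentration:sup-Y-bound}.

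The argument is essentially mechanical and I do not anticipate a genuine obstacle. The two points that require a little care are the quadratic-variation computation---where independence of the $\wn{j}$ must be used to discard the off-diagonal terms and thereby extract the extra power of $J^{-1}$---and checking that the stated integrability is enough to justify invoking BDG and freely exchanging the relevant expectations and integrals.
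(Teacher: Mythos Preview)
Your proposal is correct and follows essentially the same route as the paper: compute the quadratic variation as $\langle M\rangle_t = J^{-2}\sum_j\int_0^t|\sigma_j(s)|^2\,\d s$ (the paper does this by stacking the $\sigma_j$ into a single $1\times dJ$ integrand and reading off the Frobenius norm, which is equivalent to your independence/cross-variation argument), apply BDG, use convexity of $x\mapsto x^{q/2}$ to pass the $q/2$-power inside the average over $j$, and then apply~\eqref{eq:holder_trick} with $r=q/2$. For the second display, the paper uses the same union-bound-plus-Markov argument; your formulation via $\sup_s Y_s\le Y_0+\sup_s|M_s|$ is in fact slightly cleaner than the paper's intermediate step through $\sup_s|Y_s-Y_0|$.
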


\begin{proof}
    For $s\ge0$, let
    \[
        g(s) \coloneq \frac{1}{J}\begin{pmatrix} \sigma_1(s)^T, \dots, \sigma_J(s)^T \end{pmatrix} \in \R^{1\times (dJ)}
        \quad \text{ and } \quad W_s:=\begin{pmatrix} \wn{1}_s \\ \vdots \\ \wn{J}_s \end{pmatrix} \in \R^{dJ}.
    \]
    Applying the Burkholder--Davis--Gundy inequality~\cref{thm:BDG_ineq} to $g$, we have
    \begin{align}
        \expect \pra*{ \sup_{s\in[0,t]} \abs*{M_s}^q} \le \cbdg{q} \expect \pra*{ \scp*{M}_t^{\frac{q}{2}}} && \text{where} &&\scp*{M}_t = \int_0^t \norm*{g(s)}_F^2 \, \d s = \frac{1}{J^2}\sum_{j=1}^J \int_0^t \abs*{ \sigma_j(s)}^2 \, \d s.
    \end{align}
    Using \eqref{eq:holder_trick} with $r = \frac{q}{2}$, we obtain
    \begin{align}
        \scp*{M}_t^{\frac{q}{2}} &= \bra*{\frac{1}{J^2}\sum_{j=1}^J \int_0^t \abs*{ \sigma_j(s)}^2 \, \d s}^{\frac{q}{2}}
        \le \frac{1}{J^{\frac{q}{2}}} \frac{1}{J}\sum_{j=1}^J \bra*{\int_0^t \abs*{ \sigma_j(s)}^2 \, \d s}^{\frac{q}{2}} \\
        &\le \frac{1}{J^{\frac{q}{2}}} \frac{1}{J}\sum_{j=1}^J \frac{1}{\ell^{\frac{q}{2}-1}}
        \int_{0}^{t} \e^{(\frac{q}{2}-1)\ell s} \abs*{ \sigma_j(s)}^q \, \d s.
    \end{align}
    The second claim follows from
    \begin{align}
        & \proba \left[ \sup_{s\in[0, t]} Y_s  \ge \expect Y_0 + A \right]
         \le \proba \left[ Y_0 - \expect Y_0 \geq  \frac{A}{2}\right]
         + \proba \left[ \sup_{s \in [0, t]}  Y_s - Y_0  \geq \frac{A}{2} \right]
     \end{align}
     and Markov's inequality.
\end{proof}

\subsubsection{Concentration inequality: interacting particle system}
\begin{lemma}[Bound on probability of large excursions]
    \label{prop:bound-on-bad-set-general-noise}
    Assume that $f$ satisfies \cref{assumption:bounded} and let $q\ge 2$. Consider the CBO dynamics~\eqref{eq:cbo}
    where $\bra[\big]{\xn{j}_0}_{j \in \range{1}{J}}$ are sampled i.i.d.\ from some $\mfldis_0 \in \mathcal P_{2q}(\real^d)$.
    Then, for any $\kappa < \min\bigl\{\edecay{2}, \frac{\edecay{2q}}{q}\bigr\}$,
    there exists a finite constant $\icbad{q}{\kappa} $ such that for all $A > 0$, the following holds for all $J \in \N_{+}$:
    \begin{align}
        \proba \left[ \sup_{t \geq 0} \e^{\kappa t} \empmomentp{2}{t} \geq \expect \Bigl[ \empmomentp{2}{0} \Bigr] + A \right]
        \le \icbad{q}{\kappa} A^{-q} J^{- \frac{q}{2}}  \momentp{2q}{\mfldis_0} .
    \end{align}
    The constant $\icbad{q}{\kappa}$ is given by
    \[
        \icbad{q}{\kappa}
        =  2^{3q-1} \cmz{2q}
        + 2^{4 q + 1} \cbdg{q} \sigma{^{q}}
        \left(\frac{q-2}{\edecay{2q } - q \kappa} \right)^{\frac{q}{2} - 1}
        \frac{
            \bra*{1 + \e^{\alpha  (\overline f - \underline f) }}^{\frac{1}{2}}
           }{\edecay{2q } - q \kappa},
    \]
    with the convention that $0^0=1$ if $q=2$.
\end{lemma}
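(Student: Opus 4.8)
The plan is to bring $Y_t := \e^{\kappa t}\,\empmomentp{2}{t}$ into the form $Y_t \le Y_0 + M_t$ required by \cref{lem:concentration-ineq}, and then to optimise the free parameter $\ell$ there. First I would redo the It\^o computation from the proof of \cref{lem:decay-centered-moments} in the special case $p = 2$: the second-order term carrying the prefactor $\tfrac{p(p-2)}{2}$ then vanishes, and after summing over particles and dividing by $J$ the martingale part collapses, since $\sum_{j=1}^J\bigl(\xn{j}_t - \nmx{t}\bigr) = 0$ annihilates the double sum. This leaves
\begin{align*}
    \d \empmomentp{2}{t} &\le -\edecay{2}\,\empmomentp{2}{t}\,\d t + \d N_t, \\
    \d N_t &= \frac{2\sigma}{J}\sum_{j=1}^J \bigl\langle \xn{j}_t - \nmx{t},\, S\bigl(\xn{j}_t - \wmx{t}\bigr)\,\d\wn{j}_t\bigr\rangle.
\end{align*}
Since $\kappa < \edecay{2}$ and $\empmomentp{2}{t} \ge 0$, It\^o's product rule gives $\d\bigl(\e^{\kappa t}\empmomentp{2}{t}\bigr) \le \e^{\kappa t}\,\d N_t$, whence $Y_t \le Y_0 + M_t$ with $Y_0 = \empmomentp{2}{0}$ and $M_t = \tfrac1J\sum_{j=1}^J\int_0^t\langle\sigma_j(s),\d\wn{j}_s\rangle$, where $\sigma_j(s) := 2\sigma\,\e^{\kappa s}\,S\bigl(\xn{j}_s - \wmx{s}\bigr)\bigl(\xn{j}_s - \nmx{s}\bigr)$ (using symmetry of $S$). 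Because $|S(x)y| \le |x|\,|y|$ for both noise operators, $\tfrac1J\sum_j|\sigma_j(s)|^q$ is dominated by a multiple of $\empmomentp{2q}{s}$, which has finite expectation since $\mfldis_0 \in \mathcal P_{2q}$; thus the integrability hypothesis of \cref{lem:concentration-ineq} holds and \eqref{eq:concentration:sup-Y-bound} applies.

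For the first term of \eqref{eq:concentration:sup-Y-bound}, namely $\expect\bigl[|Y_0 - \expect Y_0|^q\bigr]$, I would use Huygens' identity \eqref{eq:huygens} to write $\empmomentp{2}{0} = \tfrac1J\sum_{j=1}^J|\xn{j}_0 - \mu|^2 - |\nmx{0} - \mu|^2$ with $\mu := \expect\xn{1}_0 = \mathcal M(\mfldis_0)$. The first summand is a centred average of i.i.d.\ real random variables, so a Marcinkiewicz--Zygmund inequality bounds its $q$-th centred moment by a multiple of $\cmz{2q}\,J^{-q/2}\momentp{2q}{\mfldis_0}$; the fluctuation of $|\nmx{0}-\mu|^2$ is $O(J^{-q})$ in $L^q$ and is absorbed into it. Combined with the factor $2^q/A^q$ of \eqref{eq:concentration:sup-Y-bound}, this produces the first, $\cmz{2q}$-proportional summand of $\icbad{q}{\kappa}$.

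For the second term I apply \eqref{eq:concentration:sup-martingale} and estimate, pointwise in $s$, using $|S(x)y|\le|x|\,|y|$, the Cauchy--Schwarz inequality over the particle index, the triangle inequality $|\xn{j}_s - \wmx{s}| \le |\xn{j}_s - \nmx{s}| + |\nmx{s} - \wmx{s}|$, and \cref{lem:m-malpha} with exponent $2q$,
\begin{align*}
    \frac1J\sum_{j=1}^J|\sigma_j(s)|^q
    &\le (2\sigma)^q\,\e^{q\kappa s}\biggl(\frac1J\sum_{j=1}^J|\xn{j}_s - \wmx{s}|^{2q}\biggr)^{1/2}\bigl(\empmomentp{2q}{s}\bigr)^{1/2} \\
    &\le \bigl(2^{2q-1}(1+\e^{\alpha(\overline f - \underline f)})\bigr)^{1/2}(2\sigma)^q\,\e^{q\kappa s}\,\empmomentp{2q}{s}.
\end{align*}
Taking expectations and inserting $\expect\empmomentp{2q}{s} \le \expect\empmomentp{2q}{0}\,\e^{-\edecay{2q}s}$, bounded by a multiple of $\momentp{2q}{\mfldis_0}\,\e^{-\edecay{2q}s}$ via \cref{lem:decay-centered-moments} and Huygens again, I am reduced to the integral $\int_0^t \e^{(\frac q2 - 1)\ell s + q\kappa s - \edecay{2q}s}\,\d s$. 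For $q > 2$ I take $\ell = \dfrac{\edecay{2q} - q\kappa}{q-2}$, so that the integral is at most $\dfrac{2}{\edecay{2q}-q\kappa}$ while the prefactor $\ell^{-(\frac q2-1)}$ equals $\bigl(\tfrac{q-2}{\edecay{2q}-q\kappa}\bigr)^{\frac q2-1}$; when $q=2$ the $\ell$-dependent factor is absent and the integral is $\le \tfrac{1}{\edecay{4}-2\kappa}$, matching the convention $0^0 = 1$. Crucially, $\edecay{2q} - q\kappa > 0$ is precisely the standing assumption $\kappa < \edecay{2q}/q$ (and $\kappa < \edecay{2}$ was already used to get $Y_t \le Y_0 + M_t$). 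Collecting these constants reproduces the second, $\cbdg{q}\sigma^q(1+\e^{\alpha(\overline f-\underline f)})^{1/2}$-proportional summand of $\icbad{q}{\kappa}$, with the advertised dependence on $\edecay{2q}-q\kappa$.

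Since the resulting bound on $\proba\bigl[\sup_{s\in[0,t]}Y_s \ge \expect Y_0 + A\bigr]$ is independent of $t$, monotone convergence as $t\to\infty$ (the events being increasing in $t$, and $\expect Y_0 = \expect\bigl[\empmomentp{2}{0}\bigr]$) yields the claim. I expect the main obstacle to be the bookkeeping in the estimate of $\expect\bigl[|Y_0 - \expect Y_0|^q\bigr]$: one must separate cleanly the genuine $O(J^{-q/2})$ fluctuation of the i.i.d.\ part from the higher-order $O(J^{-q})$ empirical-mean correction, and keep track of the numerical constants so that the stated form of $\icbad{q}{\kappa}$ emerges.
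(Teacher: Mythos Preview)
Your proposal is correct and follows essentially the same route as the paper: the same It\^o reduction to $Y_t \le Y_0 + M_t$ with the same $\sigma_j$, the same Huygens/Marcinkiewicz--Zygmund treatment of $\expect\bigl[|Y_0 - \expect Y_0|^q\bigr]$, and the same optimisation $\ell = (\edecay{2q}-q\kappa)/(q-2)$ in \cref{lem:concentration-ineq}. The only cosmetic difference is that you apply Cauchy--Schwarz over the particle index pathwise before taking expectations, whereas the paper applies H\"older over~$\Omega$ to $\expect[|\sigma_j(s)|^q]$ directly and invokes exchangeability; the resulting constants coincide.
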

\begin{remark}\label{rmk:bound-on-bad-set-general-noise-coarse}
    Note that $\expect \Bigl[ \empmomentp{2}{0} \Bigr] \leq  \momentp{2}{\mfldis_0} $,
    so it also holds that
    \begin{align}
        \proba \left[ \sup_{t \geq 0} \e^{\kappa t} \empmomentp{2}{t} \geq \momentp{2}{\mfldis_0} + A \right]
        \le \icbad{q}{\kappa} A^{-q} J^{- \frac{q}{2}}  \momentp{2q}{\mfldis_0}.
    \end{align}
    This form of the estimate is convenient as it is similar to that of~\cref{prop:bound-on-bad-set-synchronous-mf}.
\end{remark}
\begin{proof}
    We proved in \cref{lem:decay-centered-moments} that
    \begin{align}
        \d \empmomentp{2}{t}
         & \le
         - \edecay{2}
         \empmomentp{2}{t}
         \, \d t + \frac{2\sigma}{J} \sum_{j = 1}^{J}
         \scp*{ \xn{j}_t - \meanx{t}, S\bra*{ \xn{j}_t - \wmx{t}} \d \wn{j}_t} \\
         & \qquad - \frac{2\sigma}{J^2} \sum_{j = 1}^{J} \sum_{k=1}^{J}
         \scp*{ \xn{j}_t - \meanx{t}, S\bra*{ \xn{k}_t - \wmx{t}} \d \wn{k}_t}.
    \end{align}
    Observe that the second noise term vanishes.
    Define $Y_t := \e^{\kappa t}  \empmomentp{2}{t}$.
    Since $\kappa \leq \edecay{2}$,
    we have by It\^o's formula that~$Y_t \le Y_0 + M_t$, where $M_t$ is defined as in \cref{lem:concentration-ineq} with $\sigma_j$ given by
    \begin{align}
        \sigma_j(s) &:=
        2\sigma \e^{\kappa s}
            S\bra[\Big]{ \xn{j}_s - \wmx{s}}\bra[\Big]{\xn{j}_s - \meanx{s}}.
    \end{align}
    Therefore, we obtain for both $S\in \{S^{(i)}, S^{(a)}\}$ that
    \begin{align}
        \abs*{\sigma_j(s)}^q
        & \le 2^q \sigma^q \e^{q\kappa s}  \abs[\Big]{ \xn{j}_s - \wmx{s}}^q\cdot\abs[\Big]{\xn{j}_s - \meanx{s}}^q.
    \end{align}
    From the inequality $\abs{x+y}^{2q} \le 2^{2q-1} \abs{x}^{2q} + 2^{2q-1} \abs{y}^{2q}$ for all $x,y\in\R^d$ and \cref{lem:m-malpha}
    we have
    \begin{align}
        \expect\pra[\Big]{  \left\lvert \xn{j}_{s} - \wmx{s}\right\rvert^{2q}}
        \le
        2^{2q-1} \left( 1 + \e^{\alpha  (\overline f - \underline f) } \right)  \expect \pra[\Big]{\empmomentp{2q}{s}}.
    \end{align}
    Therefore, we obtain from Hölder's inequality and \cref{lem:decay-centered-moments} that
    \begin{align}
        \expect \pra[\Big]{\abs[\big]{\sigma_j(s)^q}}
         &\le
         2^{q}\sigma^q \e^{q\kappa s}
         \bra*{\expect\pra[\Big]{  \left\lvert \xn{j}_{s} - \wmx{s}\right\rvert^{2q}}\cdot\expect\pra[\Big]{  \left\lvert \xn{j}_{s} - \meanx{s}\right\rvert^{2q}}}^{\frac{1}{2}} \\
         & \le 2^{2q} \sigma^q \e^{q\kappa s}  \bra*{1 + \e^{\alpha  (\overline f - \underline f) }}^{\frac{1}{2}} \expect \pra[\Big]{\empmomentp{2q}{s}} \\
         & \le 2^{2q} \sigma^q \e^{(q\kappa  - \edecay{2q})s}  \bra*{1 + \e^{\alpha  (\overline f - \underline f) }}^{\frac{1}{2}} \expect \pra[\Big]{\empmomentp{2q}{0}}.
    \end{align}
    Hence, $\expect\pra[\big]{\abs*{ \sigma_j(s)}^q} \in L^1([0, \infty))$ since $q\kappa < \edecay{2q}$ by assumption,
    which allows to apply \cref{lem:concentration-ineq} to obtain
    \begin{align}
        \expect \left[  \sup_{s \in [0,t]} \abs{M_s}^q  \right]
        \leq
        \frac{2^{2q}\cbdg{q} \sigma^q  \bra*{1 + \e^{\alpha  (\overline f - \underline f) }}^{\frac{1}{2}}}{J^{\frac{q}{2}}\ell^{\frac{q}{2} - 1}}
          \expect \Bigl[ \empmomentp{2q}{0} \Bigr]
        \int_{0}^{t} \e^{(\frac{q}{2} - 1) \ell s + (q \kappa  - \edecay{2q}) s} \d s\,.
    \end{align}
    Note that $q \kappa < \edecay{2q } $ ensures that the exponential in the integral is decreasing if $q = 2$.
    For $q > 2$, we fix $\ell = \frac{\edecay{2q} - q \kappa}{q - 2}$ so that~$(\frac{q}{2} - 1) \ell = \frac{1}{2}(\edecay{2q} - q \kappa)$,
    and the exponential is decreasing again.
    For all $q \geq 2$ cases, it holds that
    \begin{align}
        \expect \left[  \sup_{s \in [0,t]} \abs{M_s}^q  \right]
        \leq
         2^{2q+1}\cbdg{q} \sigma^q
            \left(\frac{q-2}{\edecay{2q} - q \kappa} \right)^{\frac{q}{2} - 1}
            \frac{
                \bra*{1+\e^{\alpha  (\overline f - \underline f)}}^{\frac{1}{2}}}{\edecay{2q} - q \kappa}
        J^{- \frac{q}{2}}
        \expect \Bigl[ \empmomentp{2q}{0} \Bigr],
    \end{align}
    with the convention that $0^0 = 1$ for $q = 2$.
    Using 
    Huygens' identity \eqref{eq:huygens}, 
    together with the Marcinkiewicz--Zygmund inequality,
    Jensen's inequality and
    the elementary inequality $\expect |Z - \expect Z|^q \leq 2^q \expect |Z|^q$ for any real-valued random variable~$Z$ with finite first moment,
    we deduce
    \begin{align}
        \expect \Bigl[ \bigl\lvert Y_0 - \expect Y_0 \bigr\rvert^q \Bigr]
            &= \expect  \left( \left\lvert \empmomentp{2}{0} - \expect \empmomentp{2}{0} \right\rvert \right)^q \\
            &= \expect
            \left\lvert
                \frac{1}{J} \sum_{j=1}^{J} \left\lvert \xn{j}_0 - \nm (\mfldis_0) \right\rvert^2
                - \left\lvert \nm(\emp{0}) - \nm(\mfldis_0) \right\rvert^2
                -  \momentp{2}{\mfldis_0}
                + \expect \left[ \left\lvert \nm(\emp{0}) - \nm(\mfldis_0) \right\rvert^2\right]
            \right\rvert^q
            \\
            &\leq 2^{q-1}
            \expect
            \left\lvert
                \frac{1}{J} \sum_{j=1}^{J} \left\lvert \xn{j}_0 - \nm (\mfldis_0) \right\rvert^2 - \momentp{2}{\mfldis_0}
            \right \rvert^q \\
            &\qquad +
            2^{q-1}
            \expect
            \left\lvert
                \left\lvert \nm(\emp{0}) - \nm(\mfldis_0) \right\rvert^2
                -
                \expect \left[ \left\lvert \nm(\emp{0}) - \nm(\mfldis_0) \right\rvert^2 \right]
            \right\rvert^q
            \\
            &\leq
            2^{q-1} \cmz{q} J^{- \frac{q}{2}}
            \expect
            \left\lvert
                \left\lvert \xn{1}_0 - \nm (\mfldis_0) \right\rvert^2 - \momentp{2}{\mfldis_0}
            \right\rvert^q
            +
            2^{2q-1}
            \expect  \left\lvert \nm(\emp{0}) - \nm(\mfldis_0) \right\rvert^{2q}
            \\
            &\leq
            2^{2q-1} \cmz{q} J^{- \frac{q}{2}}
            \expect \left\lvert \xn{1}_0 - \nm (\mfldis_0) \right\rvert^{2q}
            + 2^{2q-1}  \cmz{2q} J^{- \frac{q}{2}}   \expect \left\lvert \xn{1}_0 - \nm (\mfldis_0) \right\rvert^{2q} \\
            &= 2^{2q} \cmz{2q} J^{- \frac{q}{2}} \momentp{2q}{\mfldis_0} ,
            \label{eq:proof:concentration:y-Marcinkiewicz}
    \end{align}
    where we used that $\cmz{2q} \geq \cmz{q}$.
    Thus, equation \eqref{eq:concentration:sup-Y-bound} and the inequality $\empmomentp{2q}{0} \leq 2^q \momentp{2q}{\mfldis_0}$ imply that
    \begin{align}
        \proba \left[ \sup_{s \in[0,t]} \e^{\kappa s} \empmomentp{2}{s} \geq \expect \Bigl[ \empmomentp{2}{0} \Bigr] + A \right]
        \le \cbad{q}{\kappa} J^{- \frac{q}{2}}  \momentp{2q}{\mfldis_0} .
    \end{align}
    Note that the right-hand side of this inequality is independent of~$t$,
    so the same inequality holds when the supremum on the left-hand side is taken over $[0, \infty)$ by monotone convergence. This implies the claim.
\end{proof}

\begin{remark}
    Similar statements with $\empmomentp{2}{t}$ replaced by $\empmomentp{p}{t}$ can be obtained in the same way,
    but they are not required for our purposes in this paper.
\end{remark}

\subsubsection{Concentration inequality: synchronously coupled mean-field system}
\begin{lemma}[Bound on probability of large excursions for the synchronously coupled system]
    \label{prop:bound-on-bad-set-synchronous-mf}
    Fix $q \geq 2$ and assume that $f$ satisfies \cref{assumption:bounded}.
    Consider the system \eqref{eq:thm-mfl:synchronously-coupled-system} where $\bra[\big]{\xn{j}_0}_{j}$ are sampled i.i.d.\ from $\mfldis_0^{\otimes J}$, with $\mfldis_0 \in \mathcal P_{2q}(\real^d)$.
    Then for all $\kappa < \min\bigl\{\edecay{2}, \frac{\edecay{2q}}{q}\bigr\}$
    and for all $A > 0$,
    the following holds for all $J \in \N_{+}$:
    \begin{align}
        \proba \left[ \sup_{t \geq 0} \e^{\kappa t} \emplmomentp{2}{t} \geq \momentp{2}{\mfldis_0} + A \right]
        & \le \cbad{q}{\kappa} A^{-q} J^{-\frac{q}{2}} \Bigl [ \momentp{2q}{\mfldis_0} \Bigr] ,
    \end{align}
    The constant $\cbad{q}{\kappa}$ is given by
    \[
        \cbad{q}{\kappa}
        =  \frac{3^q}{2^q} \icbad{q}{\kappa}
        +
        3^q \cwm{2q} 2^{q+1} \sigma^{2q} \tau(S)^q
        \left( \frac{2(q-1)}{\edecay{2q} - q \kappa} \right)^{q-1}
        \left( 1 + \frac{2 \sigma^2 \tau(S)}{\edecay{2} - \kappa}  \left(1 + \e^{\frac{\alpha}{2} (\overline f - \underline f)} \right)^{2} \right)^q
        \frac{1}{\edecay{2q} - \kappa}\,,
    \]
    where $\icbad{q}{\kappa}$ is as in \cref{prop:bound-on-bad-set-general-noise}.
\end{lemma}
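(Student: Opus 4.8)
The plan is to adapt the proof of \cref{prop:bound-on-bad-set-general-noise}, replacing the empirical weighted mean $\wmx{t}$ throughout by the mean-field weighted mean $\wmeanmfl{t}$ and paying for the mismatch between the two. First I would apply It\^o's formula to $\abs{\xnl{j}_t - \nmxl{t}}^2$, sum over $j$, and divide by $J$. Since $\frac{\d}{\d t}\nmxl{t} = -\bra*{\nmxl{t} - \wmeanmfl{t}}$ by \eqref{eq:mfl_sde_first}, the drift of $\xnl{j}_t - \nmxl{t}$ is exactly $-\bra*{\xnl{j}_t - \nmxl{t}}$, so the computation parallels that of \cref{lem:decay-centered-moments}; after noting that the double-sum stochastic term collapses exactly as in the proof of \cref{prop:bound-on-bad-set-general-noise}, I obtain
\[
    \d \emplmomentp{2}{t} \leq -2\, \emplmomentp{2}{t} \, \d t + \sigma^2 \tau(S) \biggl( \frac 1J \sum_{j=1}^{J} \abs[\big]{\xnl{j}_t - \wmeanmfl{t}}^2 \biggr) \d t + \d N_t,
\]
where the stochastic term $N_t$, once rescaled by $\e^{\kappa t}$, is a martingale of the type handled by \cref{lem:concentration-ineq}.

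The genuinely new point is the treatment of $\frac 1J \sum_j \abs{\xnl{j}_t - \wmeanmfl{t}}^2$, which in the interacting particle case was bounded cleanly by $\bra[\big]{1 + \e^{\frac{\alpha}{2}(\overline{f}-\underline{f})}}^2 \emplmomentp{2}{t}$ because there $\wm$ acts on the very empirical measure whose moments are tracked. Here, writing $\mathfrak D^J_t := \abs{\wmeanmfl{t} - \wmxl{t}}$ (the quantity appearing in \eqref{eq: D_bound}), Minkowski's inequality together with \cref{lem:m-malpha} applied to the empirical measure $\empl{t}$ gives $\bigl(\frac 1J \sum_j \abs{\xnl{j}_t - \wmeanmfl{t}}^2\bigr)^{1/2} \leq \bigl(1 + \e^{\frac{\alpha}{2}(\overline{f}-\underline{f})}\bigr)\emplmomentp{2}{t}^{1/2} + \mathfrak D^J_t$. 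Squaring this with Young's inequality and choosing the Young parameter $\theta = \frac{\edecay{2}-\kappa}{2\sigma^2\tau(S)\bra[\big]{1+\e^{\frac{\alpha}{2}(\overline{f}-\underline{f})}}^2} > 0$ (positive precisely because $\kappa < \edecay{2}$, i.e.\ $\sigma < \widetilde{\sigma}$), the coefficient of $\emplmomentp{2}{t}$ in the drift becomes $-\frac{\edecay{2}+\kappa}{2} \leq -\kappa$, leaving a residual drift $C_\theta (\mathfrak D^J_t)^2$ with $C_\theta = \bigl(1 + \frac{2\sigma^2\tau(S)(1+\e^{\frac{\alpha}{2}(\overline{f}-\underline{f})})^2}{\edecay{2}-\kappa}\bigr)\sigma^2\tau(S)$. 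Passing to $Y_t := \e^{\kappa t}\emplmomentp{2}{t}$ and using $\kappa \leq \edecay{2}$, this yields $Y_t \leq Y_0 + C_\theta\int_0^t \e^{\kappa s}(\mathfrak D^J_s)^2\,\d s + M_t$, where $M_t$ is the martingale of \cref{lem:concentration-ineq} with $\sigma_j(s) = 2\sigma\,\e^{\kappa s}\,S\bra*{\xnl{j}_s - \wmeanmfl{s}}\bra*{\xnl{j}_s - \nmxl{s}}$.

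From here the argument is a three-way split. Since $\expect Y_0 = \expect\bigl[\emplmomentp{2}{0}\bigr] \leq \momentp{2}{\mfldis_0}$ by Huygens' identity, the event $\{\sup_{t\geq 0} Y_t \geq \momentp{2}{\mfldis_0} + A\}$ is contained in the union of $\{Y_0 - \expect Y_0 \geq A/3\}$, $\{C_\theta\int_0^\infty \e^{\kappa s}(\mathfrak D^J_s)^2\,\d s \geq A/3\}$ and $\{\sup_{t\geq 0} M_t \geq A/3\}$, and I would bound each by $3^q A^{-q}$ times its $q$-th moment. The first is controlled by the Marcinkiewicz--Zygmund computation of \eqref{eq:proof:concentration:y-Marcinkiewicz}, contributing a term proportional to $\cmz{2q}\,J^{-q/2}\,\momentp{2q}{\mfldis_0}$. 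The second uses the H\"older trick \eqref{eq:holder_trick} (with $r=q$) together with $\expect\bigl[(\mathfrak D^J_s)^{2q}\bigr] \leq \cwm{2q}\,\momentp{2q}{\mfldis_0}\,J^{-q}\,\e^{-\edecay{2q} s}$, which follows from \cref{lem:convergence_weighted_mean_iid-2} and \cref{thm:mfl-decay-p-p-cetered-moment}; this produces the $\cwm{2q}$ part of $\cbad{q}{\kappa}$, and since $J^{-q}\leq J^{-q/2}$ the resulting bound fits the stated form. The third follows from \cref{lem:concentration-ineq} once $s\mapsto \expect\bigl[\abs{\sigma_j(s)}^q\bigr]\in L^1$, which I check by bounding $\expect\bigl[\abs{\xnl{j}_s - \wmeanmfl{s}}^{2q}\bigr]$ via \cref{lem:m-malpha} and \cref{thm:mfl-decay-p-p-cetered-moment} and $\expect\bigl[\abs{\xnl{j}_s - \nmxl{s}}^{2q}\bigr]$ via a Marcinkiewicz--Zygmund estimate and \cref{thm:mfl-decay-p-p-cetered-moment}; the remaining computation then mirrors that of \cref{prop:bound-on-bad-set-general-noise} and gives the $\icbad{q}{\kappa}$ part. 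Summing the three contributions yields $\cbad{q}{\kappa}$; since the bound is independent of the time horizon, the supremum over $[0,\infty)$ follows by monotone convergence. The degenerate case $\sigma = 0$ is immediate, since then $\emplmomentp{2}{t} = \emplmomentp{2}{0}\,\e^{-2t}$ almost surely and only the first piece survives.

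The main obstacle is exactly the extra drift term $\sigma^2\tau(S)(\mathfrak D^J_t)^2$, which has no analogue in \cref{prop:bound-on-bad-set-general-noise} because there $\wm$ is applied to the same empirical measure whose moments are tracked. It forces the tuning of $\theta$ --- which is why $\cbad{q}{\kappa}$ carries the factor $\bigl(1 + \frac{2\sigma^2\tau(S)(1+\e^{\frac{\alpha}{2}(\overline{f}-\underline{f})})^2}{\edecay{2}-\kappa}\bigr)^q$ --- and it means one must control a time integral of $(\mathfrak D^J_t)^2$ rather than a pure martingale; the Monte Carlo rate for the weighted mean, \cref{lem:convergence_weighted_mean_iid-2}, combined with the mean-field moment decay of \cref{thm:mfl-decay-p-p-cetered-moment}, is precisely what makes that integral converge.
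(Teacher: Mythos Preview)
Your proposal is correct and follows essentially the same strategy as the paper: the three-way split into $Y_0$, a residual drift term, and a martingale $M_t$; the Young-inequality tuning to force the drift coefficient below $-\kappa$; the control of the residual via \cref{lem:convergence_weighted_mean_iid-2} and \cref{thm:mfl-decay-p-p-cetered-moment}; and the martingale bound via \cref{lem:concentration-ineq} are exactly what the paper does. The only difference is a choice of centering: the paper works with $Y_t = \frac{1}{J}\sum_j \e^{\kappa t}\abs{\xnl{j}_t - \nm(\mfldis_t)}^2$ (centering at the deterministic mean-field mean) and uses $Y_t \geq \e^{\kappa t}\emplmomentp{2}{t}$ at the end, which makes the bound on $\expect\abs{\sigma_j(s)}^q$ slightly cleaner but adds a second error term $\abs{\nm(\mfldis_t) - \nmxl{t}}$ to the residual drift; your centering at $\nmxl{t}$ trades these off in the opposite direction.
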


\begin{proof}
    Recall from the proof of~\cref{thm:mfl-decay-p-p-cetered-moment} that
    \begin{align}
        \forall j \in \range{1}{J}, \qquad
        \d \abs[\Big]{\xnl{j}_t - \expect {\xnl{j}_t}}^2
        & \le -2 \abs[\Big]{\xnl{j}_t - \expect {\xnl{j}_t}}^2 \d t
        + \sigma^2 \tau(S)
        \Bigl\lvert \xnl{j}_t - \wm(\mfldis_t) \Bigr\rvert^2 \, \d t \\
           & \qquad +
           2 \sigma
        \Bigl\langle
            \xnl{j}_t - \expect \xnl{j}_t,
            S\bra[\Big]{\xnl{j}_t - \wm(\mfldis_t)} \, \d \wn{j}_t
        \Bigr\rangle.
    \end{align}
    By the triangle inequality and \cref{lem:m-malpha},
    together with the inequality~$\emplmomentp{2}{t} \leq \frac{1}{J} \sum_{j=1}^{J} \left\lvert \xnl{j}_t - \nm(\mfldis_t) \right\rvert^2$,
    we have
    \begin{align}
        \left( \frac{1}{J} \sum_{j=1}^{J} \abs*{\xnl{j}_t - \wm(\mfldis_t)}^2 \right)^{\frac{1}{2}}
    & \le 
    \left( \frac{1}{J} \sum_{j=1}^{J} \abs*{\xnl{j}_t - \nm(\mfldis_t)}^2 \right)^{\frac{1}{2}}
    + \abs*{\nm(\mfldis_t) - \nm(\empl{t})} 
 \\ &\qquad
 + \abs*{\nm(\empl{t}) - \wm(\empl{t})}
 + \abs*{\wm(\empl{t}) - \wm(\mfldis_t)}
 \\
    & \le 
    \left(1 + \e^{\frac{\alpha}{2} (\overline f - \underline f)} \right) \left( \frac{1}{J} \sum_{j=1}^{J} \abs*{\xnl{j}_t - \nm(\mfldis_t)}^2 \right)^{\frac{1}{2}}
 \\ &\qquad 
 + \abs*{\nm(\mfldis_t) - \nm(\empl{t})}  + \abs*{\wm(\empl{t}) - \wm(\mfldis_t)}.
    \end{align}
    Therefore, since $(a + b)^2 \leq (1 + \varepsilon) a^2 + (1 + \frac{1}{\varepsilon}) b^2$ for all $\varepsilon > 0$,
    it follows that
    \begin{align}
        \frac{1}{J} \sum_{j=1}^{J} \abs*{\xnl{j}_t - \wm(\mfldis_t)}^2
        &\leq (1 + \varepsilon) \left(1 + \e^{\frac{\alpha}{2} (\overline f - \underline f)} \right)^2 \left( \frac{1}{J} \sum_{j=1}^{J} \abs*{\xnl{j}_t - \nm(\mfldis_t)}^2 \right) \\
        &\qquad
        + \left( 1 + \frac{1}{\varepsilon} \right)
        \left(
            \abs*{\nm(\mfldis_t) - \nm(\empl{t})}  + \abs*{\wm(\empl{t}) - \wm(\mfldis_t)}
        \right)^2.
    \end{align}
    We take $\varepsilon = \frac{1}{\sigma^2 \tau(S)} (\edecay{2} - \kappa) \left(1 + \e^{\frac{\alpha}{2} (\overline f - \underline f)} \right)^{-2}$,
    so that
    we have for  $Y_t = \frac{1}{J} \sum_{j=1}^{J} \e^{\kappa t} \left\lvert \xnl{j}_t - \nm(\mfldis_t) \right\rvert^2$ that
    \begin{align}
        \d Y_t
    & \le \sigma^2 \tau(S) \e^{\kappa t} \left( 1 + \frac{1}{\varepsilon} \right)
    \left(
        \abs*{\nm(\mfldis_t) - \nm(\empl{t})}  
        + \abs*{\wm(\empl{t}) - \wm(\mfldis_t)}
    \right)^2 \, \d t
    \\
    & \qquad + \frac{2\sigma}{J}\e^{\kappa t} \sum_{j=1}^{J} \Bigl\langle \xnl{j}_t - \nm(\mfldis_t), S\Bigl(\xnl{j}_t - \wm(\mfldis_t)\Bigr) \, \d \wn{j}_t \Bigr\rangle\,.
    \end{align}
    Since $\kappa < \edecay{2}$,
    the first term is negative, and so
    \begin{align}
        \label{eq:proof-mfl-concentration-Y-bound}
        Y_t &\le Y_0 + \sigma^2 \tau(S)
        \left( 1 + \frac{1}{\varepsilon} \right)
        \int_{0}^{t} 
        \e^{\kappa s}
        \left(
            \abs*{\nm(\mfldis_s) - \nm(\empl{s})}  
            + \abs*{\wm(\empl{s}) - \wm(\mfldis_s)}
        \right)^2 \, \d s + M_t \\
            &\le  Y_0 + Z_t + M_t\,, 
    \end{align}
    where we introduced
    \[
        Z_t := \sigma^2 \tau(S)
        \left( 1 + \frac{1}{\varepsilon} \right)
        \int_{0}^{t} 
        \e^{\kappa s}
        \left(
            \abs*{\nm(\mfldis_s) - \nm(\empl{s})}  
            + \abs*{\wm(\empl{s}) - \wm(\mfldis_s)}
        \right)^2 \, \d s    
    \]
    and $M_t$ is defined as in \cref{lem:concentration-ineq} with $\sigma_j$ given by
    \begin{align}
        \sigma_j(s) &:= 2\sigma \e^{\kappa s} S\bra[\Big]{ \xnl{j}_s - \wm(\mfldis_s)}\bra[\Big]{\xnl{j}_s - \nm(\mfldis_s)}.
    \end{align}
    As in the proof of~\cref{prop:bound-on-bad-set-general-noise},
    we obtain for both $S\in \{S^{(i)}, S^{(a)}\}$ that
    \begin{align}
        \abs*{\sigma_j(s)}^q
        & \le 2^q \sigma^q \e^{q\kappa s}  \abs[\Big]{\xnl{j}_s - \wm(\mfldis_s)}^q\cdot\abs[\Big]{\xnl{j}_s - \nm(\mfldis_s)}^q.
    \end{align}
    From \cref{lem:m-malpha} we have that
    \begin{align}
    \expect\pra[\Big]{\left\lvert \xnl{j}_{s} - \wm(\mfldis_s)\right\rvert^{2q}}
    \le
    2^{2q-1 }
    \left( 1 + \e^{ \alpha (\overline f - \underline f) } \right)
\expect\pra[\Big]{\left\lvert \xnl{j}_{s} - \nm(\mfldis_s)\right\rvert^{2q}}.
\end{align}
Therefore,
using~\cref{thm:mfl-decay-p-p-cetered-moment} and Hölder's inequality,
we obtain that
\begin{align}
\expect \pra[\Big]{\abs[\big]{\sigma_j(s)}^q}
         &\le 2^{2q} \sigma^q \e^{(q\kappa - \edecay{2q}) s} \left( 1 + \e^{ \alpha (\overline f - \underline f) }\right)^{\frac{1}{2}}
     \expect\pra[\Big]{\left\lvert \xnl{j}_{0} - \nm(\mfldis_0)\right\rvert^{2q}}\,.
\end{align}
Once again,
it holds that $\expect\pra[\big]{\abs*{ \sigma_j(s)}^q} \in L^1\bigl([0, \infty)\bigr)$ since $\kappa < \frac{\edecay{2q}}{q}$,
so we can apply \cref{lem:concentration-ineq} to obtain
\begin{align}
    \expect \pra*{ \sup_{s\in[0,t]} \abs*{M_s}^q}
    \le \frac{2^{2q}\cbdg{q}
        \sigma^q
         \left( 1 + \e^{ \alpha (\overline f - \underline f) } \right)^{\frac{1}{2}}
        }{J^{\frac{q}{2}}\ell^{\frac{q}{2}-1}}
        \expect\pra[\Big]{\left\lvert \xl_{0} - \nm(\mfldis_0)\right\rvert^{2q}}
        \int_{0}^{t} \e^{(\frac{q \ell}{2}- \ell + q\kappa - \edecay{2q}) s}  \d s\,.
    \end{align}
    As before we deduce that
    \begin{align}
        \expect \left[  \sup_{s \in [0,t]} \abs{M_s}^q  \right]
        \leq
         2^{2q+1}\cbdg{q} \sigma^q \left(\frac{q-2}{\edecay{2q} - q \kappa} \right)^{\frac{q}{2} - 1}
            \frac{\bra*{1+\e^{\alpha  (\overline f - \underline f)}}^{\frac{1}{2}}}{\edecay{2q} - q \kappa} J^{- \frac{q}{2}}
         \momentp{2q}{\mfldis_0} ,
    \end{align}
    with the convention that $0^0=1$ for $q=2$.
From \eqref{eq:proof-mfl-concentration-Y-bound}, 
we have that
\begin{align}
    \proba \left[ \sup_{s\in[0, t]} Y_s  \ge \expect Y_0 + A \right]
     &\le 
     \proba \left[ \sup_{s\in[0, t]} Y_0 + Z_s + M_s  \ge \expect Y_0 + A \right] \\
     &\leq
     \proba \left[ Y_0 - \expect Y_0 \geq  \frac{A}{3}\right]
     +
     \proba \left[ \sup_{s \in [0, t]} \abs{Z_s} \geq  \frac{A}{3}\right]
     +
     \proba \left[ \sup_{s \in [0, t]} \abs{M_s} \geq \frac{A}{3} \right] \\
     &\leq
     \frac{3^q}{A^q}\expect  \left\lvert Y_0 - \expect Y_0 \right\rvert^q
     +
     \frac{3^q}{A^q} \expect  \left[ \sup_{s \in [0, t]} \abs{Z_s}^q \right]
     +
     \frac{3^q}{A^q}   \expect \pra*{ \sup_{s\in[0,t]} \abs*{M_s}^q}.
\end{align}
The first and third terms can be bounded as previously.
For the second term, 
using~\eqref{eq:holder_trick} with parameter $\ell = \frac{\edecay{2q} - q \kappa}{2(q-1)}$ so that~$(q - 1) \ell = \frac{1}{2}(\edecay{2q} - q \kappa)$,
then using~\cref{lem:convergence_weighted_mean_iid-2} and~\cref{thm:mfl-decay-p-p-cetered-moment},
we have that
\begin{align}
    \expect  \left[ \sup_{s \in [0, t]} \abs{Z_s}^q \right]
    &\leq 
    \frac{2^{q-1}\sigma^{2q} \tau(S)^q}{\ell^{q-1}}
    \left( 1 + \frac{1}{\varepsilon} \right)^q
    \int_{0}^{+\infty} 
    \e^{(q-1)\ell s + \kappa s} \left( \expect  \abs*{\nm(\mfldis_s) - \nm(\empl{s})}^{2q} + \expect \abs*{\wm(\empl{s}) - \wm(\mfldis_s)}^{2q} \right)  \, \d s     \\
    &\leq 
    \cwm{2q} \frac{2^{q} \sigma^{2q} \tau(S)^q}{\ell^{q-1} J^{q}}
    \left( 1 + \frac{1}{\varepsilon} \right)^q
    \int_{0}^{+\infty} 
    \e^{(q-1)\ell s + \kappa s}  \momentp{2q}{\mfldis_s} \, \d s     \\
    &\leq 
    \cwm{2q} \frac{2^{q+1} \sigma^{2q} \tau(S)^q}{\ell^{q-1} J^{q}}
    \left( 1 + \frac{1}{\varepsilon} \right)^q
    \frac{1}{\edecay{2q} - \kappa}.
\end{align}
Using that
\(
Y_t \geq \e^{\kappa t} \emplmomentp{2}{t},
\)
we can then conclude in the same way as in the proof of~\cref{prop:bound-on-bad-set-general-noise}.
\end{proof}

\subsection{Stability estimate for the weighted mean}
\label{sec:proof:lem:stab_wmean}
\begin{lemma}[Local Lipschitz continuity of~$\mu \mapsto \wmmu-  \nm(\mu)$]
    \label{lem:stab_wmean}
    Suppose that~\cref{assumption:bounded,assumption:lip} are satisfied.
    Then it holds for all $\mu, \nu \in \mathcal P_2(\mathbb R^d)$ that
    \[
        \left\lvert \wmmu - \mathcal M(\mu) - \wmnu + \mathcal M(\nu) \right\rvert
        \leq        C_{\mathcal M}
        \left(  \sqrt{\momentp{2}{\mu}} + \sqrt{\momentp{2}{\nu}}  \right)
        \wasserstein_2 (\mu, \nu), \qquad
        C_{\mathcal M} := 2 \alpha L_f \e^{2 \alpha (\overline f - \underline f)}.
    \]
\end{lemma}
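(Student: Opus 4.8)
The plan is to write the error $\wm(\mu) - \mathcal M(\mu)$ as an explicit ratio and to compare two such ratios along a single coupling. Abbreviate $g(x) := \e^{-\alpha f(x)}$, $Z_\mu := \int_{\R^d} g \, \mu(\d x)$, $m_\mu := \mathcal M(\mu)$, and $N_\mu := \int_{\R^d}(x-m_\mu)\,g(x)\,\mu(\d x)$, so that $\wm(\mu) - \mathcal M(\mu) = N_\mu/Z_\mu$ (and likewise for $\nu$). From \cref{assumption:bounded} one gets the two-sided bound $\e^{-\alpha\overline f}\le Z_\mu\le\e^{-\alpha\underline f}$, hence $Z_\mu^{-1}\le\e^{\alpha\overline f}$, while Cauchy--Schwarz gives $\lvert N_\mu\rvert\le\e^{-\alpha\underline f}\sqrt{\momentp{2}{\mu}}$. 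The elementary ingredient, used several times, is the Lipschitz bound
\[
    \bigl\lvert g(x)-g(y)\bigr\rvert\le\e^{-\alpha\underline f}\,\alpha\,\bigl\lvert f(x)-f(y)\bigr\rvert\le\alpha L_f\,\e^{-\alpha\underline f}\,\lvert x-y\rvert,
\]
which follows from the mean value theorem for $t\mapsto\e^{-t}$ on $[\alpha\underline f,\infty)$ together with \cref{assumption:lip}; the same argument with $y$ replaced by the point $m_\nu$ gives $\lvert g(y)-g(m_\nu)\rvert\le\alpha L_f\e^{-\alpha\underline f}\lvert y-m_\nu\rvert$.

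Next I would use the identity
\[
    \frac{N_\mu}{Z_\mu}-\frac{N_\nu}{Z_\nu}=\frac{N_\mu-N_\nu}{Z_\mu}+\frac{N_\nu\,(Z_\nu-Z_\mu)}{Z_\mu Z_\nu}
\]
and fix a coupling $\pi$ of $\mu$ and $\nu$ with $\int\lvert x-y\rvert^2\,\d\pi=\wasserstein_2^2(\mu,\nu)$. The second summand is handled immediately: by the Lipschitz bound and Cauchy--Schwarz, $\lvert Z_\mu-Z_\nu\rvert\le\int\lvert g(x)-g(y)\rvert\,\d\pi\le\alpha L_f\e^{-\alpha\underline f}\wasserstein_2(\mu,\nu)$, and combining this with the bounds on $\lvert N_\nu\rvert$, $Z_\mu$ and $Z_\nu$ shows that $\lvert N_\nu\rvert\,\lvert Z_\nu-Z_\mu\rvert\,(Z_\mu Z_\nu)^{-1}\le\alpha L_f\e^{2\alpha(\overline f-\underline f)}\sqrt{\momentp{2}{\nu}}\,\wasserstein_2(\mu,\nu)$.

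The hard part is the first summand. Writing $N_\mu=\int(x-m_\mu)g(x)\,\d\pi$ and $N_\nu=\int(y-m_\nu)g(y)\,\d\pi$ (using the marginals of $\pi$), I would split
\[
    N_\mu-N_\nu=\underbrace{\int(x-m_\mu)\bigl(g(x)-g(y)\bigr)\,\d\pi}_{I_1}+\underbrace{\int\bigl((x-y)-(m_\mu-m_\nu)\bigr)g(y)\,\d\pi}_{I_2}.
\]
For $I_1$, Cauchy--Schwarz and the Lipschitz bound give $\lvert I_1\rvert\le\alpha L_f\e^{-\alpha\underline f}\sqrt{\momentp{2}{\mu}}\,\wasserstein_2(\mu,\nu)$, using $\int\lvert x-m_\mu\rvert^2\,\d\pi=\momentp{2}{\mu}$. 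For $I_2$ the crucial observation is that $\int\bigl((x-y)-(m_\mu-m_\nu)\bigr)\,\d\pi=0$, so one may subtract the constant $g(m_\nu)$ from $g(y)$ without changing $I_2$; then Cauchy--Schwarz together with $\int\bigl\lvert(x-y)-(m_\mu-m_\nu)\bigr\rvert^2\,\d\pi=\wasserstein_2^2(\mu,\nu)-\lvert m_\mu-m_\nu\rvert^2\le\wasserstein_2^2(\mu,\nu)$ and $\lvert g(y)-g(m_\nu)\rvert\le\alpha L_f\e^{-\alpha\underline f}\lvert y-m_\nu\rvert$ yields $\lvert I_2\rvert\le\alpha L_f\e^{-\alpha\underline f}\sqrt{\momentp{2}{\nu}}\,\wasserstein_2(\mu,\nu)$. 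Dividing $\lvert I_1\rvert+\lvert I_2\rvert$ by $Z_\mu$, adding the bound on the second summand, and using $\e^{\alpha(\overline f-\underline f)}\le\e^{2\alpha(\overline f-\underline f)}$ and $\sqrt{\momentp{2}{\mu}}+2\sqrt{\momentp{2}{\nu}}\le 2\bigl(\sqrt{\momentp{2}{\mu}}+\sqrt{\momentp{2}{\nu}}\bigr)$ gives the claim with $C_{\mathcal M}=2\alpha L_f\e^{2\alpha(\overline f-\underline f)}$. I expect the centering step for $I_2$ to be the only nonroutine point: without replacing $g(y)$ by $g(y)-g(m_\nu)$ one is left with a bound of order $\wasserstein_2(\mu,\nu)$ with no centered-moment prefactor, which is precisely the failure mode reflecting that $\wm$ is not globally $1$-Lipschitz.
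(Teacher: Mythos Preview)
Your proof is correct and arrives at the stated constant, but it proceeds through a different decomposition than the paper's. The paper works with the single vector-valued function $g(x) = \bigl(x - \mathcal M(\nu)\bigr)\bigl(\e^{-\alpha f(x)} - Z_\mu\bigr)$ and the identity
\[
    \wm(\mu) - \mathcal M(\mu) - \wm(\nu) + \mathcal M(\nu)
    = \frac{1}{Z_\mu}\iint \bigl(g(x) - g(y)\bigr)\,\pi(\d x\,\d y)
    + \Bigl(\frac{1}{Z_\mu} - \frac{1}{Z_\nu}\Bigr)\int g\,\d\nu,
\]
then bounds $\lvert g(x) - g(y)\rvert$ by a product rule and controls the resulting term $\int \lvert \e^{-\alpha f(x)} - Z_\mu\rvert^2\,\mu(\d x)$ via the variance--Lipschitz inequality $\var\bigl(h(X)\bigr) \le L_h^2\,\var(X)$. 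You instead keep the scalar weight $g(x) = \e^{-\alpha f(x)}$, use the ratio decomposition for $N_\mu/Z_\mu - N_\nu/Z_\nu$, split $N_\mu - N_\nu$ into $I_1 + I_2$, and extract the centered-moment factor from $I_2$ by subtracting the constant $g(m_\nu)$ --- exploiting that $\int\bigl((x-y)-(m_\mu-m_\nu)\bigr)\,\d\pi = 0$. Both centering devices serve the same purpose (turning a naive $\wasserstein_2$ bound into one carrying a $\sqrt{\momentp{2}{\cdot}}$ prefactor) and yield the identical constant $C_{\mathcal M}$; your argument is arguably more elementary since it replaces the variance--Lipschitz step by a direct pointwise bound $\lvert g(y) - g(m_\nu)\rvert \le \alpha L_f \e^{-\alpha\underline f}\lvert y - m_\nu\rvert$, while the paper's choice of $g$ packages the centering into the integrand from the outset.
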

\begin{proof}[Proof of \cref{lem:stab_wmean}]
    Let $g(x) = \bigl(x - \mathcal M(\nu) \bigr) \, (\e^{- \alpha  f(x)} - Z_{\mu})$ and
    \[
        Z_{\mu} = \int \e^{- \alpha  f(x)} \, \mu(\d x), \qquad
        Z_{\nu} = \int \e^{- \alpha  f(x)} \, \nu(\d x).
    \]
    It holds for any coupling $\pi \in \Pi(\mu, \nu)$ that
    \begin{align} \label{eq:wasser_mean_splitting}
        \wmmu - \mathcal M(\mu) - \wmnu + \mathcal M(\nu)
         & = \int  \bigl(x - \mathcal M(\nu) \bigr) \, \left(\e^{- \alpha  f(x)} - Z_{\mu}\right)  \,
         \left( \frac{\mu(\d x)}{Z_{\mu}}  - \frac{\nu(\d x)}{Z_{\nu}} \right)                       \\
         & =
         \frac{1}{Z_{\mu}} \iint \bigl( g(x) - g(y) \bigr) \pi(\d x \, \d y)
         + \left( \frac{1}{Z_{\mu}} - \frac{1}{Z_{\nu}} \right) \int g(x) \, \nu(\d x).
    \end{align}
    By assumption, it holds that $Z_{\mu} \geq \e^{- \alpha \overline f}$ and $Z_{\nu} \geq \e^{- \alpha \overline f}$,
    which enables to control the denominators.
    \paragraph{First term}
    Since $x \mapsto \e^{- \alpha f(x)}$ is Lipschitz-continuous with constant $\alpha L_f \e^{ -\alpha \underline f}$, the function $g$ satisfies
    \begin{align}
        \left\lvert g(x) - g(y) \right\rvert
         & \leq |x - y| \bigl\lvert \e^{- \alpha f(x)} - Z_{\mu}\bigr\lvert + \bigl\lvert y - \mathcal M(\nu) \bigr \rvert \cdot \bigl\lvert \e^{- \alpha f(y)} - \e^{- \alpha f(x)} \bigr\rvert \\
         & \leq |x - y| \bigl\lvert \e^{- \alpha f(x)} - Z_{\mu}\bigr\lvert + \alpha L_f \e^{- \alpha \underline f} \bigl\lvert y - \mathcal M(\nu) \bigr \rvert  |x - y|.
    \end{align}
    Therefore, we deduce that
    \begin{align}
        \iint \bigl\lvert  g(x) - g(y) \bigr \rvert \pi(\d x \, \d y)
         & \leq
         \left( \iint \bigl\lvert  x - y \bigr \rvert^2 \pi(\d x \, \d y) \right)^{\frac{1}{2}}
         \left( \iint \lvert \e^{-\alpha f(x)} - Z_{\mu} \rvert^2 \pi(\d x \, \d y) \right)^{\frac{1}{2}} \\
         & \qquad
         + \alpha L_f \e^{- \alpha \underline f} \left( \iint \bigl\lvert  x - y \bigr \rvert^2 \pi(\d x \, \d y) \right)^{\frac{1}{2}}
         \left( \iint \lvert y - \mathcal M(\nu) \rvert^2  \pi(\d x \, \d y) \right)^{\frac{1}{2}}.
    \end{align}
    Infimizing over couplings, we deduce that
    \begin{align}
        \inf_{\pi \in \Pi(\mu, \nu)}
        \iint \bigl\lvert  g(x) - g(y) \bigr \rvert \pi(\d x \, \d y)
         & \leq \bra*{
             \left( \int \left\lvert \e^{-\alpha f(x)} - Z_{\mu} \right\rvert^2 \mu(\d x) \right)^{\frac{1}{2}}
         + \alpha L_f \e^{- \alpha \underline f} \sqrt{\momentp{2}{\nu}}} \wasserstein_2(\mu, \nu).
    \end{align}
    Recall the following classical inequality:
    for i.i.d.\ random vectors $X$ and $Y$ and any~$L_f$-globally Lipschitz function~$f$,
    it holds that
    \begin{align}
        \var \bigl( f(X) \bigr)
        =
        \frac{1}{2} \expect \left[  \bigl\lvert f(X) - f(Y)  \bigr\rvert^2 \right]
        \leq \frac{L_{f}^2}{2} \expect \left[  \left\lvert X - Y  \right\rvert^2 \right]
        = L_{f}^2 \expect  \left[\left\lvert X - \expect X \right\rvert^2\right].
    \end{align}
    Since $x \mapsto \e^{- \alpha f(x)}$ is Lipschitz-continuous with constant $\alpha L_f \e^{ -\alpha \underline f}$,
    it therefore holds that
    \[
        \int \left\lvert \e^{-\alpha f(x)} - Z_{\mu} \right\rvert^2 \, \mu(\d x)
        \leq \left( \alpha L_f \e^{- \alpha \underline f} \right)^2 \momentp{2}{\mu},
    \]
    and so
    \[
        \inf_{\pi \in \Pi(\mu, \nu)}
        \iint \bigl\lvert  g(x) - g(y) \bigr \rvert \, \pi(\d x \, \d y)
        \leq \alpha L_f \e^{- \alpha \underline f} \left( \sqrt{\momentp{2}{\mu}} + \sqrt{\momentp{2}{\nu}} \right) \wasserstein_2(\mu, \nu).
    \]

    \paragraph{Second term}
    It holds that
    \begin{align}
        \left\lvert \left( \frac{1}{Z_{\mu}} - \frac{1}{Z_{\nu}} \right) \int g(x) \, \nu(\d x) \right\rvert
         & \leq \e^{2 \alpha \overline f} \lvert Z_{\nu} - Z_{\mu} \rvert
         \int \bigl\lvert x - \mathcal M(\nu) \bigr \rvert \, \e^{- \alpha  f(x)} \, \nu(\d x)                                                                                                                               \\
         & \leq \e^{2 \alpha \overline f} \iint \left\lvert \e^{- \alpha f(x)} - \e^{- \alpha f(y)} \right\rvert \, \pi(\d x \, \d y)
         \, \e^{- \alpha \underline f} \int \bigl\lvert x - \mathcal M(\nu) \bigr \rvert \, \nu(\d x)                                                                                                                        \\
         & \leq \e^{2 \alpha \overline f - \alpha \underline f} \iint \alpha L_f \e^{-\alpha \underline f} \left\lvert x - y \right\rvert \, \pi(\d x \, \d y)  \int \bigl\lvert x - \mathcal M(\nu) \bigr \rvert \, \nu(\d x).
    \end{align}
    Using Jensen's inequality and infimizing over all couplings,
    we deduce that
    \[
        \left\lvert \left( \frac{1}{Z_{\mu}} - \frac{1}{Z_{\nu}} \right) \int g(x) \, \nu(\d x) \right\rvert
        \leq \alpha L_f \e^{2 \alpha\bra{ \overline f - \underline f}} \, \wasserstein_1 (\mu, \nu) \, \momentp{1}{\nu}.
    \]

    \paragraph{Concluding the proof}
    Gathering the bounds,
    we obtain
    \begin{align}
        \left\lvert \wmmu - \mathcal M(\mu) - \wmnu + \mathcal M(\nu) \right\rvert
        \le{}
        \alpha L_f \e^{2\alpha
            \bra{\overline f - \underline f}} \bra*{\bra*{ \sqrt{\momentp{2}{\mu}} + \sqrt{\momentp{2}{\nu}} } \wasserstein_2(\mu, \nu)
            +
            \momentp{1}{\nu}\,
        \wasserstein_1 (\mu, \nu) }.
    \end{align}
    Using that $\momentp{1}{\nu} \leq \sqrt{\momentp{2}{\nu}}$
    as well as $\wasserstein_1(\mu, \nu) \leq \wasserstein_2(\mu, \nu)$  we conclude the proof.
\end{proof}

\subsection{Monte Carlo estimate for the weighted mean}

\begin{lemma}
    [Convergence of the weighted mean for i.i.d.\ samples]
    \label{lem:convergence_weighted_mean_iid-2}
    Fix~$p \geq 2$.
    Suppose that $f$ satisfies \cref{assumption:bounded},
    and that $\mfldis \in \mathcal P_p(\real^d)$ has finite moments up to order~$p$.
    Then there exists a constant $\cwm{p}\bigl(\alpha, \overline{f}, \underline{f} \bigr)$ such that
    \begin{align}
        &\expect  \left\lvert
            \wmxl{}
            - \wm\bigl(\mfldis\bigr)
    \right\rvert_p^p
        \leq \cwm{p}
        \expect  \Bigl\lvert \xnl{1} - \expect \xnl{1} \Bigr\rvert_p^{p} J^{- \frac{p}{2}},
        \qquad \mu_{\overline {\mathcal X}^J} := \frac{1}{J}\sum_{j=1}^{J} \delta_{ \widebar{X}^j },
        \qquad
        \left\{ \xnl{j} \right\}_{j \in \N} \stackrel{\rm{i.i.d.}}{\sim} \mfldis \,,
    \end{align}
    where
    \[
        \cwm{p}\bigl(\alpha, \underline f, \overline f\bigr) :=
        \cmz{p} \e^{p\alpha (\overline f - \underline f)}
        \Bigl(1 + \e^{\frac{\alpha}{p} (\overline f - \underline f)} \Bigr)^p \,.
    \]
\end{lemma}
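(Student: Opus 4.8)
The plan is to express the weighted mean of the empirical measure as a ratio of i.i.d.\ averages and reduce the estimate to the Marcinkiewicz--Zygmund inequality. Write $w_j := \e^{-\alpha f(\xnl j)}$ and $Z_J := \tfrac1J\sum_{j=1}^J w_j$, so that $\wm(\mu_{\overline{\mathcal X}^J}) = Z_J^{-1}\,\tfrac1J\sum_{j=1}^J \xnl j w_j$. Setting $m := \wm(\mfldis)$ and $V_j := (\xnl j - m)\,w_j$, and using that $\tfrac1J\sum_j (\xnl j - m)w_j = \tfrac1J\sum_j \xnl j w_j - m Z_J$, one obtains the exact identity
\begin{align*}
    \wm\bigl(\mu_{\overline{\mathcal X}^J}\bigr) - \wm(\mfldis) = \frac{1}{Z_J}\cdot\frac1J\sum_{j=1}^J V_j.
\end{align*}

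The key observation --- and really the only subtle point in the argument --- is that centering at the \emph{weighted} mean $m = \wm(\mfldis)$, rather than at $\mathcal M(\mfldis)$ or at $\expect\xnl 1$, makes the summands centered: $\expect V_1 = \expect[\xnl 1 w_1] - m\,\expect[w_1] = \bigl(\wm(\mfldis) - m\bigr)\,\expect[w_1] = 0$. With this choice the fluctuating normalization $Z_J$ becomes harmless, since by \cref{assumption:bounded} we have $Z_J \geq \e^{-\alpha\overline f}$ pointwise, so that $\bigl|\wm(\mu_{\overline{\mathcal X}^J}) - \wm(\mfldis)\bigr|_p \leq \e^{\alpha\overline f}\,\bigl|\tfrac1J\sum_{j=1}^J V_j\bigr|_p$. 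Any other centering would force one to control directly the correlation between the numerator and the denominator.

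Next I would apply the Marcinkiewicz--Zygmund inequality coordinatewise to the mean-zero i.i.d.\ sum $\sum_j V_j$, followed by Jensen's inequality (power-mean, valid since $p/2 \geq 1$) to bound $\bigl(\sum_j (V_j)_i^2\bigr)^{p/2} \leq J^{p/2-1}\sum_j |(V_j)_i|^p$; summing over the coordinates and using that the $V_j$ are identically distributed gives, for $p \geq 2$,
\begin{align*}
    \expect\biggl|\frac1J\sum_{j=1}^J V_j\biggr|_p^p \leq \cmz p\, J^{-\frac p2}\,\expect|V_1|_p^p.
\end{align*}
Since $|V_1|_p = w_1\,|\xnl 1 - m|_p \leq \e^{-\alpha\underline f}\,|\xnl 1 - m|_p$, combining this with the bound $Z_J^{-1} \leq \e^{\alpha\overline f}$ yields
\begin{align*}
    \expect\bigl|\wm(\mu_{\overline{\mathcal X}^J}) - \wm(\mfldis)\bigr|_p^p \leq \cmz p\, \e^{p\alpha(\overline f - \underline f)}\, J^{-\frac p2}\,\expect\bigl|\xnl 1 - \wm(\mfldis)\bigr|_p^p.
\end{align*}

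Finally I would replace the moment centered at $\wm(\mfldis)$ by the moment centered at the mean $\expect\xnl 1 = \mathcal M(\mfldis)$, which is where \cref{lem:m-malpha} enters. By the triangle inequality in $L^p(\Omega)$ together with \cref{lem:m-malpha} (applicable for the norm $|\placeholder|_p$ since $p \geq 2$), which gives $|\wm(\mfldis) - \mathcal M(\mfldis)|_p \leq \e^{\frac\alpha p(\overline f - \underline f)}\bigl(\expect|\xnl 1 - \expect\xnl 1|_p^p\bigr)^{1/p}$, one gets
\begin{align*}
    \bigl(\expect|\xnl 1 - \wm(\mfldis)|_p^p\bigr)^{\frac1p} \leq \Bigl(1 + \e^{\frac\alpha p(\overline f - \underline f)}\Bigr)\bigl(\expect|\xnl 1 - \expect\xnl 1|_p^p\bigr)^{\frac1p}.
\end{align*}
Raising to the $p$-th power and substituting into the previous display produces the claim with $\cwm p = \cmz p\,\e^{p\alpha(\overline f - \underline f)}\bigl(1 + \e^{\frac\alpha p(\overline f - \underline f)}\bigr)^p$. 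In summary, the main obstacle is conceptual --- identifying the centering that decouples the estimate from the random normalization $Z_J$ --- after which the proof is a routine combination of Marcinkiewicz--Zygmund, Jensen's inequality, and \cref{lem:m-malpha}.
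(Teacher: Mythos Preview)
Your proposal is correct and follows essentially the same argument as the paper: rewrite the difference as $Z_J^{-1}\cdot\frac{1}{J}\sum_j(\xnl{j}-\wm(\mfldis))\e^{-\alpha f(\xnl{j})}$, bound the random denominator pointwise via \cref{assumption:bounded}, apply Marcinkiewicz--Zygmund componentwise (your explicit remark that the summands are mean-zero is precisely what justifies this), bound the weights by $\e^{-\alpha\underline f}$, and finish with the triangle inequality together with \cref{lem:m-malpha} to pass from centering at $\wm(\mfldis)$ to centering at $\expect\xnl{1}$. The resulting constant matches the paper's exactly.
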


\begin{proof}
    Since~$f$ is bounded from above, we have
    \begin{align}
        \expect  \Bigl\lvert
            \wmxl{}
            - \wm\bigl(\mfldis\bigr)
        \Bigr\rvert_p^p
        &= \expect \left\lvert \frac{\frac{1}{J} \sum_{j=1}^{J} \Bigl(\xnl{j} - \wm\bigl(\mfldis\bigr) \Bigr) \e^{-\alpha f(\xnl{j})}}{\frac{1}{J} \sum_{j=1}^{J} \e^{-\alpha f(\xnl{j})}} \right\rvert_p^p \\
        &\leq \e^{p \alpha \overline f} \expect \left\lvert \frac{1}{J} \sum_{j=1}^{J} \Bigl(\xnl{j} - \wm\bigl(\mfldis\bigr) \Bigr) \e^{-\alpha f(\xnl{j})} \right\rvert_p^p.
    \end{align}
    Applying the Marcinkiewicz--Zygmund inequality to each component of the vector on the right-hand side, we deduce from Jensen's inequality
    \begin{align}
        \expect  \Bigl\lvert
            \wmxl{}
            - \wm\bigl(\mfldis\bigr)
        \Bigr\rvert_p^p
        &\leq \frac{\cmz{p}}{J^{\frac{p}{2}}} \e^{p\alpha \overline f} \expect \left\lvert \Bigl(\xnl{1} - \wm\bigl(\mfldis\bigr) \Bigr) \e^{-\alpha f(\xnl{1})} \right\rvert_p^p \\
        &\leq \frac{\cmz{p}}{J^{\frac{p}{2}}} \e^{p\alpha (\overline f - \underline f)} \expect \bigl\lvert \xnl{1} - \wm\bigl(\mfldis\bigr)  \bigr\rvert_p^p \,.
    \end{align}
    By the triangle inequality, we deduce that
    \begin{align}
        \left(\expect  \Bigl\lvert
            \wmxl{}
            - \wm\bigl(\mfldis\bigr)
        \Bigr\rvert_p^p\right)^{\frac{1}{p}}
        &\leq \frac{\cmz{p}^{\frac{1}{p}}}{\sqrt{J}}
        \e^{\alpha (\overline f - \underline f)}
        \biggl(
        \Bigl( \expect \bigl\lvert \xnl{1} - \expect \xnl{1} \bigr\rvert_p^p \Bigr)^{\frac{1}{p}}
        +
        \Bigl( \expect \bigl\lvert \expect{\xnl{1}} - \wm\bigl(\mfldis\bigr)  \bigr\rvert_p^p \Bigr)^{\frac{1}{p}}
        \biggr) \\
        &\leq \frac{\cmz{p}^{\frac{1}{p}}}{\sqrt{J}}
        \e^{\alpha (\overline f - \underline f)}
        \Bigl(1 + \e^{\frac{\alpha}{p} (\overline f - \underline f)} \Bigr)
        \Bigl( \expect \bigl\lvert \xnl{1} - \expect \xnl{1} \bigr\rvert_p^p \Bigr)^{\frac{1}{p}},
    \end{align}
    where we used~\cref{lem:m-malpha} in the last inequality.
    This implies the claim.
\end{proof}

\appendix
\section{The Burkholder-Davis-Gundy inequality}
\label{sec:bdg}

The Burkholder-Davis-Gundy inequality is used multiple times in this work,
and it is particularly useful to prove concentration inequalities for interacting particle systems.
For the reader's convenience,
and since we want to have dimension-independent convergence rates, we include it here.

\begin{theorem}
    [Burkholder--Davis--Gundy inequality, see Theorem 7.3 in~\cite{mao2007sde}]
    \label{thm:BDG_ineq}
    Let $(W_t)_{t \geq 0}$ denote a standard Brownian motion in $\real^m$ and let $(\mathcal F_t)_{t \geq 0}$ be the induced filtration.
    Let $(g_t)_{t \geq 0}$ be a $\real^{n \times m}$-valued $\mathcal F_t$-adapted process
    such that for every time~$T \geq 0$,
    it holds that $\int_{0}^{T} \bigl\lVert g(t) \bigr\rVert_{\rm F}^2 \, \d t < +\infty$ almost surely.
    Denote
    \[
        X_t := \int_0^t g(s) \, \d W_s
        \qquad \text{ and } \qquad
        \left\langle X\right\rangle_t := \int_0^t \bigl\lVert g(s) \bigr\rVert_{\rm F}^2 \, \d s.
    \]
    Then for all $p  > 0$, there exist positive constants $\csmallbdg{p}, \cbdg{p} < + \infty$ such that
    \begin{align}
        \forall t \geq 0, \qquad
        \csmallbdg{p} \E\left[ \left\langle X\right\rangle_t^{\frac{p}{2}} \right]
        \leq \E\left[ \sup_{0 \leq s \leq t} |X_s|^p \right]
        \leq \cbdg{p} \E\left[ \left\langle X\right\rangle_t^{\frac{p}{2}} \right].
    \end{align}
    The constants $\csmallbdg{p}$, $\cbdg{p}$ do not depend on any other parameters besides $p;$
    \begin{alignat}{3}
        \csmallbdg{p} &= \left(\frac{p}{2}\right)^p\,,
        \qquad &&\cbdg{p} = \left(\frac{32}{p}\right)^{\frac{p}{2}}
        \qquad && \text{ if } 0 < p < 2\,,\\
        \csmallbdg{p} &= 1\,,
        \qquad &&\cbdg{p} = 4
        \qquad && \text{ if } p = 2\,,\\
        \csmallbdg{p} &= (2p)^{-\frac{p}{2}}   \,,
        \qquad &&\cbdg{p} = \left( \frac{p^{p+1}}{2(p-1)^{p-1}} \right)^{\frac{p}{2}}
        \qquad && \text{ if } p > 2\,.
    \end{alignat}
\end{theorem}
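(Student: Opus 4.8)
This is the classical Burkholder--Davis--Gundy inequality, restated here only to record the explicit, dimension-free values of $\csmallbdg{p}$ and $\cbdg{p}$; these are exactly the constants in \cite[Theorem~7.3]{mao2007sde}, and every application in this paper uses the inequality with an exponent $p \ge 2$. The plan is accordingly to give the short self-contained proof for $p \ge 2$ and to quote \cite{mao2007sde} for $0 < p < 2$. First I would reduce to the case of bounded $\langle X \rangle_t$ by localizing with $\tau_n := \inf\{s : \langle X\rangle_s \ge n\}$, proving all bounds with $n$-independent constants and passing to the limit by Fatou and monotone convergence. The basic identity comes from It\^o's formula: $\d |X_t|^2 = 2\langle X_t, g(t)\,\d W_t\rangle + \norm{g(t)}_{\rm F}^2\,\d t$, so that $N_t := \int_0^t 2\langle X_s, g(s)\,\d W_s\rangle$ is a martingale, $|X_t|^2 - \langle X\rangle_t$ is a martingale, and the bracket of $N$ satisfies $\langle N\rangle_t = \int_0^t 4|g(s)^\top X_s|^2\,\d s \le 4\bigl(\sup_{s \le t}|X_s|^2\bigr)\langle X\rangle_t$, using $|g^\top x| \le \norm{g}_{\rm F}\,|x|$.

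For $p = 2$, taking expectations in the identity above gives $\E|X_t|^2 = \E\langle X\rangle_t$; the lower bound $\csmallbdg{2} = 1$ is then immediate from $|X_t|^2 \le \sup_{s \le t}|X_s|^2$, and the upper bound $\cbdg{2} = 4$ is Doob's $L^2$ maximal inequality $\E[\sup_{s \le t}|X_s|^2] \le 4\,\E|X_t|^2$. For $p > 2$ and the upper bound, I would apply It\^o's formula to $y \mapsto y^{p/2}$ at $y = |X_t|^2$ (with the standard justification near the origin, using $(\eps + y)^{p/2}$, $\eps \downarrow 0$), and bound the second-order term with $|g^\top X|^2 \le \norm{g}_{\rm F}^2|X|^2$ to obtain $\d |X_t|^p \le \frac p2 |X_t|^{p-2}\,\d N_t + \frac{p(p-1)}{2}|X_t|^{p-2}\norm{g(t)}_{\rm F}^2\,\d t$. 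Taking expectations removes the martingale term, so $\E|X_t|^p \le \frac{p(p-1)}{2}\,\E\bigl[(\sup_{s \le t}|X_s|)^{p-2}\langle X\rangle_t\bigr]$; H\"older's inequality with exponents $\bigl(\frac{p}{p-2}, \frac p2\bigr)$ followed by Doob's $L^p$ maximal inequality $\E[\sup_{s \le t}|X_s|^p] \le \bigl(\frac{p}{p-1}\bigr)^p\E|X_t|^p$ and rearranging yields precisely $\cbdg{p} = \bigl(\frac{p^{p+1}}{2(p-1)^{p-1}}\bigr)^{p/2}$.

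For $p > 2$ and the lower bound, I would start from $\langle X\rangle_t = |X_t|^2 - N_t$ together with $(a+b)^{p/2} \le 2^{p/2-1}(a^{p/2} + b^{p/2})$ to get $\E\langle X\rangle_t^{p/2} \le 2^{p/2-1}\bigl(\E|X_t|^p + \E|N_t|^{p/2}\bigr)$, then bound $\E|N_t|^{p/2}$ by a $p$-dependent constant times $\E\langle N\rangle_t^{p/4}$ via the $L^{p/2}$ martingale moment inequality for the scalar martingale $N$ --- which, since $p/2 > 1$, follows from Doob's $L^{p/2}$ inequality together with Lenglart's domination inequality when $p/2 < 2$ --- and finally use $\langle N\rangle_t^{p/4} \le 2^{p/4}(\sup_{s\le t}|X_s|)^{p/2}\langle X\rangle_t^{p/4}$ and Cauchy--Schwarz to split off a factor $\E\langle X\rangle_t^{p/2}$ that can then be absorbed on the left, giving $\csmallbdg{p} = (2p)^{-p/2}$ after the bookkeeping. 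For $0 < p < 2$ I would simply invoke \cite[Theorem~7.3]{mao2007sde}: the values $\csmallbdg{p} = (p/2)^p$ and $\cbdg{p} = (32/p)^{p/2}$ there are obtained by a good-$\lambda$ / Lenglart domination argument applied to the pair $\bigl(|X_t|^2, \langle X\rangle_t\bigr)$. The only delicate point is the constant tracking in the lower bound for $p > 2$: the successive uses of $(a+b)^r$, Cauchy--Schwarz, Doob's and Lenglart's inequalities must be arranged so that the self-referential term $\E\langle X\rangle_t^{p/2}$ is genuinely absorbable and the surviving constant collapses to exactly $(2p)^{-p/2}$; everything else is routine.
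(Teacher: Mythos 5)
The paper does not prove this theorem: it is stated in the appendix purely as a quoted background fact, with an explicit pointer to Theorem~7.3 of Mao's book, so there is no ``paper's own proof'' to compare against. Your proposal therefore goes well beyond what the paper does. On the substance: your derivation of the upper bound for $p \ge 2$ is correct and, pleasingly, the constant bookkeeping really does collapse to $\cbdg{p} = \bigl(\tfrac{p^{p+1}}{2(p-1)^{p-1}}\bigr)^{p/2}$ after the Hölder--Doob rearrangement; the $p=2$ case via Doob's $L^2$ inequality is also fine.

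The gap is in the lower bound for $p>2$, and you flag it yourself. The route you sketch — $\langle X\rangle_t = |X_t|^2 - N_t$, the elementary power inequality, a martingale moment bound for $\E|N_t|^{p/2}$, the estimate $\langle N\rangle_t \le 4\bigl(\sup_s|X_s|^2\bigr)\langle X\rangle_t$ (note your ``$2^{p/4}$'' should be $2^{p/2}$), and Cauchy--Schwarz to peel off a factor of $\bigl(\E\langle X\rangle_t^{p/2}\bigr)^{1/2}$ — leads to a self-referential \emph{quadratic} inequality in $\bigl(\E\langle X\rangle_t^{p/2}\bigr)^{1/2}$, and solving that quadratic produces a square root that does not visibly simplify to the clean value $(2p)^{-p/2}$. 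Moreover, for $2 < p < 4$ the intermediate martingale-moment bound is a BDG upper estimate at exponent $p/2 \in (1,2)$, which you relegate to a citation, so the argument is not self-contained there either. To turn this into a real proof you would need either to carry out the absorption explicitly (likely via Young's inequality with a carefully tuned parameter rather than Cauchy--Schwarz) and check whether the constant matches, or simply to reproduce Mao's lower-bound argument. That said, the lower constant $\csmallbdg{p}$ is never used anywhere in the paper — every invocation is of the upper bound $\cbdg{p}$ at integer exponents $\ge 2$ — so citing Mao, as the paper does, is both correct and the sensible choice.
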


\section{Constants used in this work}
\label{sec:constants}
\label{appendix:constants-table}

\cref{table:summary_constants} summarizes the constants that appear in the key inequalities used in this work, as well as their dependence on different parameters such as the method parameters $\bigl(\alpha, \sigma, \tau(S)\bigr)$ and the problem parameters~$(L_f, \overline f, \underline f)$.

\begin{table}[h!]
    \footnotesize
    \centering
    \begin{tabular}{c|c|c|p{8cm}}
        \toprule
        \textbf{Constant}
        & \textbf{Related result}
        & \textbf{Depends on}
        & \textbf{Mathematical expression} \\
        \midrule
          \phantom{$\Bigg($} $\tau(S)$ & & $S$ & see \eqref{eq:noise-prefactor} \\
        \phantom{$\Bigg($} $\cbdg{p}$
        & \cref{thm:BDG_ineq}
        & $p$
        & See exact expression in~\cref{thm:BDG_ineq}. \\
        \phantom{$\Bigg($} $\edecay{p}$
        & \cref{lem:decay-centered-moments}
        & $p$, $\sigma$, $\tau(S)$, $\alpha$, $\overline f$, $\underline f$
        & 
        \(
        \displaystyle
        p \left(1-  \frac{1}{2}\bigl(p-2+\tau(S) \bigr) \sigma^2 \Bigl(1 + \e^{\frac{\alpha}{p}(\overline f - \underline f)} \Bigr)^2\right)
        \)
        \\
        \phantom{$\Bigg($}
            $C_{\mathcal{M}}$
          & \cref{lem:stab_wmean}
          & $\alpha, L_f, \overline f, \underline f$
          &
          \(
          \displaystyle 2 \alpha L_f \e^{2 \alpha (\overline f - \underline f)}
          \)
          \\
          \phantom{$\Bigg($} $\craw{p}$
          & \cref{lem:uit-raw-moments-2norm}
          & $p, \sigma, \tau(S), \alpha, \overline f, \underline f$  
          &
          \(
          \displaystyle
          1 + \left( \frac{p}{\edecay{p}} \right)^{\frac{1}{p}} \Bigl(1 + \sigma \sqrt{\tau(S)} \cbdg{p}^{\frac{1}{p}}\Bigr) \left( 1 + \e^{\frac{\alpha}{p} (\overline f - \underline f)} \right) 
          \)
          \\
          \phantom{$\Bigg($} $\icbad{q}{\kappa}$
          & \cref{prop:bound-on-bad-set-general-noise}
          & $q, \kappa,\sigma, \alpha, \overline f, \underline f, \edecay{2q}, \cmz{2q}, \cbdg{q}$
          &$2^{3q-1} \cmz{2q}
        + 2^{4 q + 1} \cbdg{q} \sigma{^{q}}
        \left(\frac{q-2}{\edecay{2q } - q \kappa} \right)^{\frac{q}{2} - 1}
        \frac{
            \bra*{1 + \e^{\alpha  (\overline f - \underline f) }}^{\frac{1}{2}}
           }{\edecay{2q } - q \kappa}$
        \\
          \phantom{$\Bigg($} $\cbad{q}{\kappa}$
          & \cref{prop:bound-on-bad-set-synchronous-mf}
          & $q, \kappa,\sigma, \tau(S), \alpha, \overline f, \underline f, \edecay{2q}, \icbad{q}{\kappa}, C_{\rm WM, p}$
          &See expression from~\cref{prop:bound-on-bad-set-synchronous-mf}
    \\
          \phantom{$\Bigg($} $\cwm{p}$
          & \cref{lem:convergence_weighted_mean_iid-2}
          & $p, \alpha, \overline f, \underline f, \cmz{p}$
          &
          \(
          \displaystyle \cmz{p}
          \e^{p\alpha (\overline f - \underline f)}
          \Bigl(1 + \e^{\frac{\alpha}{p} (\overline f - \underline f)} \Bigr)^p
          \)
  \\
        \phantom{$\Bigg($} $C_{\rm Q}$ & Proof of \cref{eq:technical-inequality} & 
        $q, \kappa, \cbad{q}{\kappa}, \craw{8}, \rawmomentp{8}{\mfldis_0}, \momentp{2}{\mfldis_0}$
        & $2^{10} \cbad{4}{\kappa}^{\frac{1}{2}}
        \craw{8}^{2}
        \rawmomentp{8}{\mfldis_0} + 2 \Bigl( \momentp{2}{\mfldis_0} + 1 \Bigr)$
        \\
        \phantom{$\Bigg($}  $c_1$ & \cref{thm:uit-mfl} & $\kappa, \sigma, \tau(S), C_{\rm Q}, C_{\mathcal M}$  & $\cone$  \\
        \phantom{$\Bigg($}  $c_2$ &\cref{thm:uit-mfl} & $\kappa, \sigma, \tau(S), C_{\rm Q}, C_{\mathcal M}, \cwm{2}, \momentp{2}{\mfldis_0}  $   & 
        $\ctwo $ \\
        \phantom{$\Bigg($}  $ \tilde{c}_1$ & \cref{thm:stab_particle} & $ \sigma, \tau(S), \tilde{C}_{\rm Q}, \C_{\mathcal M}$  & $\ctildeone$  \\
        \phantom{$\Bigg($}  $\tilde{c}_2$ &\cref{thm:stab_particle} & $ \sigma, \tau(S), \tilde{C}_{\rm Q}, \C_{\mathcal M}$  & $\ctildetwo $ \\
        \phantom{$\Big($}
        $C_{\rm MFL}$
          & \cref{thm:uit-mfl,thm:microscopic-concentration-around-consensus-point}
          & $c_1, c_2 $
          & $ e^{ 2c_1} \cdot 2c_2 $ \\
        \phantom{$\Bigg($} $C_{\rm Stab, 1}$ &
        \cref{thm:stab_particle} &
        $c_1, \edecay{8}$ &
        $\exp \left( \frac{16 c_1}{\edecay{8} } \right) $
        \\
        \phantom{$\Bigg($} $C_{\rm Stab, 2}$ &
        \cref{thm:stab_particle} &
        $c_1, c_2, \edecay{8}$ &
        $\frac{16 c_2}{\edecay{8} } \exp \left( \frac{16 c_1}{\edecay{8} } \right) $\\
      \bottomrule
    \end{tabular}
    \caption{
        This table summarizes the constants that appear in key inequalities used in this work.
        Many of these depend on a number of variables, 
        including some of the method parameters $\bigl(\alpha, \sigma, \tau(S)\bigr)$ and problem parameters~$(L_f, \overline f, \underline f)$.
        The value these parameters are usually obvious from the context,
        so we usually suppress them in the notation.
        For example, 
        we write~$\cbad{q}{\kappa}$
        instead of $\cbad{q}{\kappa}(\sigma, \alpha, \overline f, \underline f)$
        and $\edecay{p}$  instead of~$\edecay{p}(\sigma, \tau(S), \alpha, \overline f, \underline f)$.
    }  
    \label{table:summary_constants}
\end{table}


\paragraph{Acknowledgments}
We thank the Lorentz Center (Leiden, the Netherlands) for hosting the workshop ``Purpose-driven particle systems'' in March 2023,
which initiated discussions about this work. In particular, we are grateful to Seung-Yeal Ha for suggesting this research direction, and for initial discussions.
FH is supported by start-up funds at the California Institute of Technology and by NSF CAREER Award 2340762.
UV is partially supported by the European Research Council (ERC) under the EU Horizon 2020 programme (grant agreement No 810367),
and by the Agence Nationale de la Recherche under grants ANR-21-CE40-0006 (SINEQ) and ANR-23-CE40-0027 (IPSO).

\printbibliography
\end{document}